\newcommand{\bK}{ 
{ \bf K}}
\newcommand{\bi}{\overline{\bf i }}
\newcommand{\w}{\omega}
\newcommand{\n}{^{(n)}}
\newcommand{\Z}{\ensuremath{\mathbb{Z}}}
\newcommand{\be}{\begin{equation}}
\newcommand{\ee}{\end{equation}}
\newcommand{\C}{\mathbb{C}}
\newcommand{\K}{\mathbb{C}}
\newcommand{\kxn}{k[x_1,\dots,x_n]}
\newcommand{\la}{\langle}
\newcommand{\N}{\mathbb{N}}
\newcommand{\R}{\mathbb{R}}
\newcommand{\ra}{\rangle}
\newcommand{\xb}{{\bf x}}
\newcommand{\yb}{{\bf y}}
\newcommand{\pb}{{\bf p}}
\renewcommand{\a}{\alpha}
\newcommand{\np}{\mathbb{N}_0}
\newcommand{\vp}{{\varphi}}
\newcommand{\mc}{\mathcal}
\newcommand{\mcm}{\mathcal M}
\newcommand{\pk}{{\bf p}^{(k)}}
\newcommand{\Q}{\mathbb{Q}}
\newcommand{\msl}{\mathsf{L}}
\newcommand{\vnj}{\mathcal{V}^n_j}
\newcommand{\gb}{Gr\"obner basis\ }
\newcommand{\zt}{I^{(\zeta)}_{\mathfrak{A}^{(n)}} }
\newcommand{\Supp}{\text{Supp}}
\newcommand{\dbtilde}[1]{\accentset{\approx}{#1}} 
\providecommand{\keywords}[1]
{
  \text{\textit{Keywords:}} #1
}
\providecommand{\msc}[1]
{
  \text{\textit{Mathematics Subject Classification (2020):}} #1
}
\newtheorem {thm}{Theorem}[section]
\newtheorem {pro}        [thm]{Proposition}
\newtheorem {cor}   [thm]{Corollary}
\newtheorem {lemma}       [thm]{Lemma}
\newtheorem {alg}        [thm]{Algorithm}
\newtheorem {conjecture}  [thm]{Conjecture}
\newtheorem {defin}         [thm]{Definition}
\theoremstyle{definition}
\newtheorem {remark}      [thm]{Remark}
\title{Invariants and reversibility\\ in polynomial systems of ODEs}
\author{Mateja Gra\v si\v c$^{1,2}$, Abdul Salam Jarrah$^3$ and Valery G.~Romanovski$^{1,4,5}$\\
$^1${\it Faculty of Natural Science and Mathematics,}\\ {\it  University of Maribor,
Koro\v ska cesta 160, SI-2000 Maribor, Slovenia}\\
$^2${\it Institute of Mathematics, Physics and Mechanics,}\\
{\it Jadranska 19, SI-1000 Ljubljana, Slovenia}\\
$^3${\it Department of Mathematics and Statistics}\\{\it American University of Sharjah,  Sharjah, UAE}\\
$^4${\it Faculty of Electrical Engineering and Computer Science,} \\ {\it University of Maribor,
 Koro\v ska cesta 46, SI-2000 Maribor, Slovenia}\\
$^5${\it Center for Applied Mathematics and Theoretical Physics,}\\
{\it Mladinska 3, SI-2000 Maribor, Slovenia}
}
\date{}
\begin{document} 
\maketitle

\begin{abstract}
This paper explores a relationship between invariants of certain group actions and the time-reversibility of two-dimensional polynomial differential systems exhibiting a $1:-1$ resonant singularity at the origin. We focus on the connection of time-reversibility with the Sibirsky subvariety of the center (integrability) variety, which encompasses systems possessing a local analytic first integral near the origin. An algorithm for generating the Sibirsky ideal for these systems is proposed  and  the algebraic properties of the ideal are examined. 

Furthermore, using a generalization of  the concept of time-reversibility we study  $n$-dimensional systems with a $1:\zeta:\zeta^2:\dots:\zeta^{n-1}$ resonant singularity at the origin, where $n$ is prime and $\zeta$ is a primitive $n$-th root of unity. We study the invariants of a Lie group action on the parameter space of the system, leveraging the theory of binomial ideals as a fundamental tool for the analysis. Our study reveals intriguing connections between generalized reversibility, invariants, and binomial ideals, shedding light on their complex interrelations.

\end{abstract}

\keywords{   Polynomial systems of ODEs, invariants, time-reversibility, symmetries, normal forms, binomial ideals}  

\msc{
34C14,  34C20, 13F65} 

\section{Introduction}

The theory of polynomial invariants provides a powerful framework for analyzing and understanding the behavior of solutions to differential equations. It offers insights into the global dynamics, stability, and structural properties of the solutions
and aids in classifying different types of differential equations based on their qualitative behavior. For instance, such invariants are 
crucial for the recent classification of quadratic systems presented in \cite{ALSV}.
Furthermore, polynomial invariants can  help to identify conservation laws
(which are first integrals from the mathematical point of view)
in physical and biochemical models 
described by differential equations. 

One of objectives of this work is to conduct a study of polynomial 
invariants following the direction proposed  in \cite{LiuLi,Sib1,Sib2}.  In these works, the authors focus on two-dimensional polynomial systems of ordinary differential equations (ODEs) that depend on parameters and can be represented in the form
\begin{equation} \label{ggs}
\begin{aligned}
\dot {x} &
        =  x - \sum_{(p, q) \in S} a_{pq} x^{p + 1} y^q, \\
\dot y &
        = - y + \sum_{(p, q) \in S} b_{qp} x^q y^{p + 1}\,,
\end{aligned}
\end{equation}
where $x$ and $y$ are complex variables (unknown functions).
The dot means the differentiation with respect to the variable $\tau$,
the coefficients $a_{pq}, b_{qp}$ are complex parameters, and 
\be \label{def:S}
S = \{(p_j,q_j) \mid p_j+q_j \geq 1, j = 1, \ldots, \ell \} \subset \N_{-1} \times \N_0,
\ee
where $\N_0=\N\cup \{0\}$ and $\N_{-1}=\N_0\cup \{ -1\}$.
The parameter space of \eqref{ggs} is  $\C^{2\ell}$,  and the ring of polynomials in  the variables $a_{p_1 q_1}, \dots, a_{p_\ell q_\ell}, 
b_{q_\ell p_\ell},\dots, b_{q_1 p_1}$ with coefficients in the field $\C$ will be denoted by $\C[a,b]$.

Let  $i=\sqrt{-1}$ and $\vp \in \R $ or $\vp \in \C$. The  action of the one-parameter group of transformations of the phase space  
\begin{equation} \label{ROT}
x' = e^{-i \vp} x, \quad  y' = e^{i \vp} y,
\end{equation}
defines the corresponding  action  on the space 
of parameters. Invariants of this  group action  were studied in \cite{LiuLi,Sib1,Sib2} with a special focus on their connection to the problem of 
 analytic integrability of vector field \eqref{ggs}. It was shown that the Lyapunov quantities 
 (also called saddle quantities, focus quantities \cite{RS}, and values of singular points \cite{LLH})
 are elements of the  polynomial subalgebra generated by  these invariants. It was proved  in \cite{LiuLi,RS}
that the coefficients of normal forms of  \eqref{ggs} have similar properties. 
These properties are valuable for the study of integrability, limit cycle bifurcations, and critical period bifurcations, as demonstrated in works such as \cite{FLRS, LLR, LPR}, along with the references provided therein.

Polynomial invariants are closely related to symmetries of differential equations. 
In this study, we specifically focus on what is known as the  time-reversible symmetry, a fundamental symmetry with significant relevance in various applications (see, for example,  \cite{AGG, BBT, Lamb, TM11, WRZ} and the references given  there). Additionally, we explore a generalization of this symmetry that was introduced in \cite{LPW, W}.

For  the $n$-dimensional system of ordinary differential equations
 \be \label{sys_X}
 \dot x = {\mc F}(x),
 \ee
 where ${\mc F}(x)$ is an $n$-dimensional vector of smooth functions
 defined on some domain $\Omega$ of $\R^n$ or $\C^n$, it 
is said  that the  system 
 is \emph{time-reversible} on $\Omega$ if there exists an
involution ${\Psi}$ defined on $\Omega$
 such that
\be \label{inv_gen}
D_\Psi^{-1} \cdot {\mc F} \circ \Psi = - {\mc F}.
\ee

For some families of systems \eqref{sys_X}, the time-reversibility yields the  analytic integrability  (see e.g. \cite{Bib,LPW,W}). In this paper we   will  consider an
 interconnection of invariants of group \eqref{ROT} and the time-reversibility in  system \eqref{ggs}, reviewing and further developing the results obtained in  \cite{JLR,R,RS,Sib1,Sib2}. 

For system \eqref{ggs},
our study begins by examining the invariants of a group associated with \eqref{ROT}, which results in an  algorithm for computing a basis of these invariants. Furthermore, we discuss some algebraic properties of the invariants  in view of the theory of binomial ideals. In particular, we show  that a binomial ideal generated 
by the invariants, the so-called Sibirsky ideal of system \eqref{ggs},  
is the lattice ideal and it defines the  set of time-reversible systems in the space of parameters of \eqref{ggs}.  Then we investigate similar problems for 
the $n$-dimensional polynomial systems
of the form 
\be \label{SYS}
\dot \xb =\mathcal{Z} {\bf x}+X({\bf x})=F({\bf x}),
\ee
where $n>2$,  ${\bf x}$ is either $(x_1,\dots, x_n)$ or its transpose $ (x_1,\dots, x_n)^\top$, 
$\mc Z$ is a diagonal matrix,  
\be \label{Zmatrix}
\mc Z= {\rm{diag}}[1, \zeta, \dots, \zeta^{n-1}]  ,
\ee
$n$ is a prime number, 
 $\zeta$ is an   $n$-th primitive root of unity and the components of the vector-function $X({\bf x})$ are polynomials  which do not contain constant and linear terms. 
The choice of this system is motivated by the results of \cite{LPW,W91,W}.
It was shown in \cite{W91}
that, for any linear system  
\be \label{As}
\dot \xb  = A_s \xb
\ee
where $A_s$ is a semisimple matrix, the algebra of polynomial first integrals of \eqref{As} is finitely generated. Moreover,  if  $ A_s$ 
has eigenvalues  $\lambda_1, \dots,  , \lambda_n, $
and the $\Z$-module spanned by all non-negative integer solutions $
(k_1, \dots , k_n) $ of
$ \sum_{i=1}^n
k_i \lambda_i = 0 $ has rank $d,$ then there are exactly $d$ independent polynomial first
integrals (and also exactly $d$ independent formal first integrals) for system \eqref{As}.

Under some conditions on the nonlinear system 
\be \label{Asn}
\dot \xb  = A_s \xb +X(\xb) = v(\xb),
\ee
where $X(\xb)$ are series without constant and linear terms, 
the  first integrals 
of \eqref{As}
are conserved in the Poincar\'e-Dulac   normal form of system \eqref{Asn} \cite{LPW,W91}.
One of such conditions is the generalized 
reversibility introduced in \cite{W,LPW},  which we will define in the next section. 

Clearly,  the algebra of polynomial first integrals 
of the linear system 
 \be \label{Zlin}
 \dot {\bf x}= {\mc Z} {\bf x}
 \ee 
is generated by the  single monomial
\be \label{I_int}
 \Phi({\bf x})= x_1 x_2 \cdots x_n.
\ee
Thus the study of local integrability of system
\eqref{SYS} represents the simplest 
generalization of the Poincar\'e  center problem
(see e.g. \cite{RS}) from the two-dimensional case 
to the $n$-dimensional case.

For system \eqref{SYS},
we first investigate  polynomial invariants of the
action of the  Lie  group of transformations 
\begin{eqnarray}\label{ch3d}
{\bf y}=e^{\mc Z\psi} {\bf  x}
\end{eqnarray}
on system \eqref{SYS},  where ${\bf y}=(y_1,\dots, y_n)$ and $\psi$
is a real or complex parameter, and give an algorithm to compute 
a basis of the polynomial subalgebra of these invariants. Then we show that the coefficients of the Poincar\'e-Dulac normal form are polynomials of the polynomial subalgebra of these invariants. 

Finally, we study properties 
of system \eqref{SYS} related to the reversibility
and their connection to the invariants. {In particular, we define 
a generalization of the Sibirsky ideal for a family of  systems  of the form \eqref{SYS}.  Unlike  the two-dimensional case,  it is not a lattice ideal in general. 
The main result of the paper is 
Theorem \ref{conj_2} which establishes a simple relation between the ideal defining the set of  invariants and the  ideal defining 
 the set of generalized reversible systems (which is  a subvariety of the integrability variety). To prove the theorem  we introduce a generalization of toric ideals. 
}


\section{Preliminaries}

Let $k$ be a field and $G$ a subgroup of the multiplicative group of invertible  $n \times n$ matrices with elements in $k$. 
For a matrix $A \in G$, 
let $A {\bf x}$ denote the usual action of $G$ on  $k^n$. 
A polynomial $f \in \kxn$ is  \emph{invariant under the action of the group $G$} (or simply \emph{an invariant of $G$}) if $f({\bf x}) = f(A {\bf x})$  for every ${\bf x} \in k^n$ and every $A \in G$. 


Applying transformation \eqref{ROT} to system  \eqref{ggs}  we  obtain the  system 
\[
\dot x' =x' - \sum_{(p,q) \in  S} a(\vp)_{pq} x'^{p+1}{y'}^q,
\quad  
\dot y' =  -y'+ \sum_{(p,q) \in  S} b(\vp)_{qp} x'^q{y'}^{p+1},
\]
with the coefficients of the transformed system being 
\be
\label{tr} 
a(\vp)_{pq}= a_{pq}  e^{i(p-q)\vp}, \quad
b(\vp)_{qp} = b_{qp} e^{-i(p-q)\vp}, 
\ee 
for all $(p,q)\in S$. 
Equations \eqref{tr} define a representation of group \eqref{ROT} in the space  $\C^{2\ell}$ of parameters of system \eqref{ggs}. 


The set of polynomials which are  invariant under this group action has been studied for the first time by Sibirsky \cite{Sib1,Sib2} and  latter by Liu and Li \cite{LiuLi}.
More precisely, Sibirsky considered the special case of system \eqref{ggs}, the complexifications of the real systems which can be written after re-scaling the time by $i$ as 
\begin{equation} \label{gsr}
\begin{aligned}
\dot x &
        = 
          x - \sum_{(p, q) \in S} a_{pq} x^{p + 1} \bar x^q,
\end{aligned}
\end{equation}
where $a_{pq}$ are complex parameters (see e.g. \cite[Chapter $3$]{RS} for more details).
Equation  \eqref{gsr} is a particular case of \eqref{ggs}   when in \eqref{ggs} $y=\bar x$ 
and the  equations on the right-hand side of \eqref{ggs} are  complex conjugate. 
We note that for  equation \eqref{gsr},  transformation
\eqref{ROT}
is just a rotation of the two-dimensional real phase plane $(u,v)$, where 
$x=u+iv$. Sibirsky called the studied invariants the {\it invariants of the rotation group}. 
However, if instead of \eqref{ROT}, we consider the group  of transformations 
\begin{equation} \label{ROT_r}
x' = e^{- \vp} x, \quad  y' = e^{ \vp} y,
\end{equation}
then it is easily seen that the invariants of \eqref{ROT_r} are the same as the ones of \eqref{ROT}. We will deal with group \eqref{ROT_r} instead of \eqref{ROT}.



Denote by $\nu$ the $2\ell$-tuple 
$(\nu_1, \ldots, \nu_{2\ell})$ in $\mathbb{N}_0^{2\ell}$.
Let 
$L: \np^{2\ell} \to \mathbb{Z}^2$ be a  homomorphism
of the additive monoids defined with respect to the ordered set 
$S$   given by \eqref{def:S} as  
\begin{equation} \label{Lnu}
L(\nu) = \binom{L^1(\nu)}{L^2(\nu)} 
       = \binom{p_1}{q_1} \nu_1        
       + \cdots
       + \binom{p_\ell}{q_\ell} \nu_\ell 
       + \binom{q_\ell}{p_\ell} \nu_{\ell+1}
       + \cdots 
       + \binom{q_1}{p_1} \nu_{2\ell}. 
\end{equation}

Let
\begin{equation} \label{lk}
\mathcal{M} = \bigcup_{K \in\mathbb{N}_0} \left \{ \nu \in\mathbb{N}_0^{2\ell} : L(\nu) = \binom{K}{K} \right\}.
\end{equation}
Obviously, $\mathcal{M}$ 
is an  Abelian monoid. Furthermore, for any $\nu \in \mathcal{M}$, we have $L^1(\nu)=L^2(\nu)$ and, hence, 
\begin{equation} \label{lsib} 
(p_1 -q_1) \nu_1 
+ \cdots 
+ (p_\ell-q_\ell)\nu_\ell 
+ (q_\ell-p_\ell) \nu_{\ell+1} 
+ \cdots
+(q_1-p_1) \nu_{2\ell} = 0. 
\end{equation} 
Since  $p_j+q_j\ge 1$ for all $1 \leq j \leq \ell$,  it is easy to see that 
the set of all non--negative integer solutions of equation \eqref{lsib}
coincides with the monoid ${\mathcal{M}}$ defined by equation \eqref{lk}.

For $ \nu \in \np^{2\ell}$ and the  ordered $2\ell$-tuple  of the parameters of system \eqref{ggs},  $$(a_{p_1 q_1},\dots,  a_{p_\ell q_\ell},
b_{q_{\ell} p_{\ell}},\dots, b_{q_1 p_1}),$$ we denote by $[\nu]$ the monomial 
\be \label{nu2d}
[\nu] = a_{p_1q_1}^{\nu_1} \cdots a_{p_\ell q_\ell}^{\nu_\ell}
         b_{q_\ell p_\ell}^{\nu_{\ell+1}} \cdots b_{q_1p_1}^{\nu_{2\ell}} .
\ee

It was observed by Sibirsky \cite{Sib1,Sib2} that a monomial $[\nu]$ is an invariant of group \eqref{ROT} if and only if $\nu \in \mathcal{M}$.
However, Sibirsky did not provide an algorithm for determining the  Hilbert basis -- a minimal spanning subset of the monoid $\mathcal{M}$  (it is known \cite{Sch,St} that such  basis does exist and is unique).
An efficient algorithm for identifying the  Hilbert basis of the monoid $\mathcal{M}$ has been proposed in \cite{JLR}. Another one follows from Algorithm 1.4.5 of \cite{St-AIT}. 



For  $\nu \in \N_0^{2\ell}$, 
 let $\hat \nu$ denote the involution of the vector $\nu$, 
\[
\hat \nu= (\nu_{2\ell},\nu_{2\ell-1},\ldots ,\nu_1).
\]

\begin{defin}
The ideal
\[
I_{S} = \langle [\nu]-[\hat \nu] : \nu\ \in \mathcal{M} \rangle \subset
\mathbb{C}[a,b]
\] 
 is called the \emph{Sibirsky ideal}
of system \eqref{ggs}.
\end{defin}

It was proved in \cite{JLR,R} that  the variety of the Sibirsky ideal,${\bf V}(I_{S})$, is the Zariski closure of the set of systems which are 
time-reversible with respect to the  linear
transformations  
\begin{equation} \label{xy} 
x \mapsto \alpha y, \ y\mapsto \alpha^{-1} x
\end{equation} 
for some $\a \in \C.$
Furthermore, it was proved in \cite{JLR} that all systems \eqref{ggs}, which  parameters belong to  ${\bf V}(I_{S})$, are locally analytically integrable in a neighborhood of the origin. 

Note that, for any $n$-dimensional  matrix of a cyclic permutation of order $n$, the eigenvalues of the matrix  are $e^{2i\pi k/n}$, where $k=0,1,\dots,n-1$.
Furthermore, transformation \eqref{xy} is a composition of the  orthogonal transformation with the matrix diag$[\alpha, \alpha^{-1}]$  
and the permutation with the matrix
\be \label{T2}
 T_2 = 
  \begin{pmatrix}
    0 & 1\\
   1 & 0
   \end{pmatrix},
\ee
and the diagonalization of  $T_2$ is the matrix 
 $$
 E_2 = 
  \begin{pmatrix}
    -1 & 0\\
   0 & 1
   \end{pmatrix}.
$$
Clearly, the Lie group  $e^{E_2\varphi}$  with the infinitesimal generator $E_2$
gives rise to the same group of transformations as \eqref{ROT_r}. 
In the next  section,  we  study some algebraic properties of the Sibirsky  ideal $I_S$ and  its connection to the invariants of the  group  $e^{E_2\varphi}$.

For  the $n\times n$ cyclic  permutation matrix 
\be \label{Tntilde}
\widetilde  T_n=\begin{bmatrix}
     0 & 1 & 0 & \ldots & 0 & 0\\
     0 & 0 & 1 & \ldots & 0 & 0\\
     0 & 0 & 0 & \ldots & 0 & 0\\
     \vdots & \vdots & \vdots & \ddots & \vdots & \vdots \\
     0 & 0 & 0 & \ldots & 0 & 1\\
     1 & 0 & 0 & \ldots & 0 & 0
 \end{bmatrix}, 
 \ee
the diagonalization  is the matrix $\mc Z$ given  by \eqref{Zmatrix}.
The  Lie group  $e^{{\mc Z}\psi} $  with the infinitesimal generator $\mc Z$  defines  transformation  \eqref{ch3d}
of the phase space of system \eqref{SYS} and induces a transformation (a representation) of the group in the space of parameters of the system.
We will study polynomial  invariants of the representation of  Lie group \eqref{ch3d} 
in the space of parameters of  a family 
of system \eqref{SYS} in Section \ref{sec_ndim}.

We denote by $T_n$ the transpose of \eqref{Tntilde}, that is, 
\be \label{Tn}
T_n= \widetilde{T_n}^\top.
\ee


It is easy to see that there are no
systems in family \eqref{SYS} that are time-reversible with respect to  linear involutions.  The notion, which we define below,  is a special case of the one introduced in \cite{WRZ} and is based on a  generalization of time-reversibility proposed in \cite{LPW,W}.  
\begin{defin}
 We say that system \eqref{SYS} is $(\widetilde  T_n,\zeta)$-reversible if 
\be \label{inv_gen_gen}
\zeta\,  
\widetilde T_n^{-1} \cdot F \circ \widetilde T_n(\xb) =   F(\xb), 
\ee
where  $\zeta^n=1$ and    $\zeta\ne 1$.
If \eqref{inv_gen_gen} holds for $\zeta=1$, we say that system \eqref{SYS} is $\widetilde T_n$-equivariant.
\end{defin}


 Let 
\be \label{tB}
{\mc B}=\text{diag}[\beta_1, \dots, \beta_n]
\ee
be an  $n\times n $ diagonal nonsingular matrix (that is,  $\beta_i\neq 0$ for all $i=1,2,\ldots,n$).

\begin{pro}
The 
transformation 
\be\label{xTy}
{\bf x}={\mc B} {\bf y},
\ee
where ${\mc B}$ is defined by \eqref{tB}, 
brings system \eqref{SYS}
to a system which is $(\widetilde T_n,\zeta)$-reversible,
if 
$$ B F({\bf x}) = \zeta
F( B {\xb}),
$$
where
\be \label{Bmatrix}
 B=\begin{bmatrix}
     0 & \frac{\beta_1}{\beta_2} & 0 & \ldots & 0 & 0\\
     0 & 0 & \frac{\beta_2}{\beta_3} & \ldots & 0 & 0\\
     0 & 0 & 0 & \ldots & 0 & 0\\
     \vdots & \vdots & \vdots & \ddots & \vdots & \vdots \\
     0 & 0 & 0 & \ldots & 0 & \frac{\beta_{n-1}}{\beta_n}\\
     \frac{\beta_n}{\beta_1} & 0 & 0 & \ldots & 0 & 0
 \end{bmatrix}.
\ee
\end{pro}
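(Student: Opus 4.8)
The plan is to reduce the claim to the stated hypothesis by transporting the reversibility relation through the linear conjugation $\xb=\mc B\yb$; the only point requiring care is the correct identification of the matrix $B$ from \eqref{Bmatrix}. First I would observe that, with $\mc B=\text{diag}[\beta_1,\dots,\beta_n]$, the matrix $B$ in \eqref{Bmatrix} is exactly the conjugate
\be
B=\mc B\,\widetilde T_n\,\mc B^{-1}.
\ee
This is verified entrywise: since $\widetilde T_n$ has nonzero entries only at the positions $(i,i+1)$ for $1\le i\le n-1$ and at $(n,1)$, and since conjugation by the diagonal matrix $\mc B$ multiplies the $(i,j)$ entry by $\beta_i/\beta_j$, the resulting nonzero entries are precisely the factors $\beta_i/\beta_{i+1}$ and $\beta_n/\beta_1$ displayed in \eqref{Bmatrix}.

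Next I would apply the substitution $\xb=\mc B\yb$ to system \eqref{SYS}. Because $\mc B$ is constant and nonsingular, $\dot\xb=F(\xb)$ transforms into $\dot\yb=G(\yb)$ with $G(\yb)=\mc B^{-1}F(\mc B\yb)$. It then remains to check that $G$ obeys the reversibility relation \eqref{inv_gen_gen}, namely $\zeta\,\widetilde T_n^{-1}\cdot G\circ\widetilde T_n(\yb)=G(\yb)$.

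The heart of the argument is a single substitution. Expanding the left-hand side of the desired relation gives $\zeta\,\widetilde T_n^{-1}\mc B^{-1}F(\mc B\,\widetilde T_n\yb)$, while the right-hand side is $\mc B^{-1}F(\mc B\yb)$. Multiplying the identity through by $\mc B$ on the left and using $\mc B\,\widetilde T_n^{-1}\mc B^{-1}=(\mc B\,\widetilde T_n\,\mc B^{-1})^{-1}=B^{-1}$ together with $\mc B\,\widetilde T_n\yb=B\,\mc B\yb=B\xb$, the required relation becomes $\zeta B^{-1}F(B\xb)=F(\xb)$, which upon multiplication by $B$ is exactly the hypothesis $BF(\xb)=\zeta F(B\xb)$. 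Hence the transformed system is $(\widetilde T_n,\zeta)$-reversible.

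I do not anticipate any genuine obstacle: the statement is really an exercise in moving a reversibility relation across a linear change of coordinates, and the only delicate part is the bookkeeping—confirming that $B$ equals $\mc B\,\widetilde T_n\,\mc B^{-1}$ rather than its inverse or transpose, and preserving the correct order of the factors $\zeta$, $\widetilde T_n^{-1}$ and the two copies of $\mc B^{\pm1}$. Once the conjugacy $B=\mc B\,\widetilde T_n\,\mc B^{-1}$ is fixed, the remainder is a one-line manipulation.
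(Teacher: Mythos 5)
Your proposal is correct and follows essentially the same route as the paper: pass to $\dot\yb=\mc B^{-1}F(\mc B\yb)$, impose the reversibility condition \eqref{inv_gen_gen}, and conjugate to reduce it to $BF(\xb)=\zeta F(B\xb)$ with $B=\mc B\widetilde T_n\mc B^{-1}$. Your explicit entrywise check that the matrix in \eqref{Bmatrix} equals $\mc B\widetilde T_n\mc B^{-1}$ is a small additional verification the paper leaves implicit, but the argument is otherwise identical.
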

\begin{proof}
After applying  transformation \eqref{xTy} to  system \eqref{SYS} we obtain  the system 
\be \label{yF}
\dot {\bf y}={\mc B}^{-1} F({\mc B} \yb).
\ee
Performing the substitution 
$$
\yb=\widetilde T_n {\bf z}
$$
in system  \eqref{yF}  we obtain 
$$
\dot {\bf z}= ({\mc B}\widetilde T_n)^{-1} F(({\mc B} \widetilde T_n) {\bf z}).
$$

Thus, system \eqref{yF}
is $(\widetilde T_n,\zeta)$-reversible if 
$$
{\mc B}^{-1} F({\mc B} \yb)= \zeta({\mc B} \widetilde T_n)^{-1} F(({\mc B} \widetilde T_n) \yb),
$$
or, equivalently, 
$$
B F(\xb)=\zeta F(B \xb),
$$
where 
$B= {\mc B}\widetilde T_n  {\mc B}^{-1}.$ 
\end{proof}

From the above,  it is easy to see that  $\det B=1.$

\begin{defin} \label{defZrev}
For $\zeta\ne 1$ and $\zeta^n=1$, we say that system \eqref{SYS} is $\zeta$-reversible if 
\be \label{AF}
A F(\xb )=  \zeta F(A \xb)
\ee
for some matrix 
\be \label{Amatrix} 
 A=\begin{bmatrix}
     0 & \alpha_1 & 0 & \ldots & 0 & 0\\
     0 & 0 & \alpha_2 & \ldots & 0 & 0\\
     0 & 0 & 0 & \ldots & 0 & 0\\
     \vdots & \vdots & \vdots & \ddots & \vdots & \vdots \\
     0 & 0 & 0 & \ldots & 0 & \alpha_{n-1}\\
     \alpha_n & 0 & 0 & \ldots & 0 & 0
 \end{bmatrix}
 \ee
 where 
 \be \label{Al_p} \alpha_1 \alpha_2\cdots\alpha_n=1.  \ee
System \eqref{SYS} is called equivariant if
\eqref{AF}
holds for $\zeta=1.$
\end{defin}

\begin{pro} \label{prop25}
If system \eqref{SYS}
is $\zeta$-reversible, then it admits a local  first integral $\Psi(\xb)$ of the form 
\be \label{Psi_int}
\Psi(\xb)=x_1x_2\ldots x_n+h.o.t. 
\ee
in a neighborhood of the origin.
\end{pro}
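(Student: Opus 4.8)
The plan is to build a formal first integral $\Psi=\Phi+\text{h.o.t.}$ directly out of the reversibility symmetry and then upgrade it to an analytic one, where $\Phi(\xb)=x_1\cdots x_n$ as in \eqref{I_int}. First I would record three consequences of Definition \ref{defZrev}. Writing $\sigma(\xb)=A\xb$, condition \eqref{AF} reads $A\,F(\xb)=\zeta\,F(A\xb)$, i.e.\ $\sigma_*F=\zeta F$. From \eqref{Al_p} one gets $\Phi(A\xb)=\alpha_1\cdots\alpha_n\,\Phi(\xb)=\Phi(\xb)$, and a direct computation with \eqref{Amatrix}, \eqref{Al_p} shows $A^n=I$, so $\sigma$ generates a cyclic group of order dividing the prime $n$. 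Comparing the linear parts in \eqref{AF} yields the commutation $A\mc Z=\zeta\mc Z A$. These three facts are the engine of the proof.

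Next I would set up the order-by-order construction. Write $F=\mc Z\xb+X(\xb)$ and put $L_0g=\nabla g\cdot\mc Z\xb$, $L_Xg=\nabla g\cdot X$, $L_F=L_0+L_X$, so $L_0\xb^{k}=\langle k,\lambda\rangle\xb^{k}$ with $\lambda=(1,\zeta,\dots,\zeta^{n-1})$. Since $n$ is prime, $1+t+\cdots+t^{n-1}$ is the minimal polynomial of $\zeta$, so $1,\zeta,\dots,\zeta^{n-2}$ are $\Q$-linearly independent; substituting $\zeta^{n-1}=-(1+\cdots+\zeta^{n-2})$ gives $\langle k,\lambda\rangle=0$ iff $k_1=\cdots=k_n$. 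Hence the kernel of $L_0$ on functions is spanned exactly by the powers of $\Phi$, equal in degree $m$ to $\langle\Phi^{m/n}\rangle$ if $n\mid m$ and to $\{0\}$ otherwise. Seeking $\Psi=\sum_{m\ge n}\Psi_m$ with $\Psi_n=\Phi$ and $L_F\Psi=0$, the degree-$m$ equation is $L_0\Psi_m=R_m$ with $R_m:=-[L_X\Psi_{<m}]_m$ determined by the previously found components.

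The crux is that the only obstruction, the $\Phi^{m/n}$-component of $R_m$, is forced to vanish by the symmetry. From $A\mc Z=\zeta\mc Z A$ and $\sigma_*F=\zeta F$ one derives the operator identities $L_0\sigma^*=\zeta\sigma^*L_0$ and $L_F\sigma^*=\zeta\sigma^*L_F$, hence $L_X\sigma^*=\zeta\sigma^*L_X$, where $\sigma^*g=g\circ\sigma$. I would construct the $\Psi_m$ to be $\sigma$-invariant by induction, the base case $\sigma^*\Phi=\Phi$ being the first fact above. If $\Psi_{<m}$ is $\sigma$-invariant, then $\sigma^*R_m=\zeta^{-1}R_m$. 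Since $\sigma^*$ permutes monomials cyclically it preserves the splitting into resonant and non-resonant parts, and $\Phi^{m/n}$ is $\sigma$-invariant; writing $R_m=c\,\Phi^{m/n}+R_m^{\perp}$ and comparing resonant parts gives $c=\zeta^{-1}c$, so $c=0$ because $\zeta\ne1$. Thus $R_m$ is non-resonant, $L_0$ is invertible on its span, and $\Psi_m:=L_0^{-1}R_m$ solves the equation; the relation $\sigma^*L_0^{-1}=\zeta L_0^{-1}\sigma^*$ on the non-resonant subspace shows $\sigma^*\Psi_m=\Psi_m$, closing the induction and producing a formal first integral of the form $\Phi+\text{h.o.t.}$

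The main obstacle is the final passage from this \emph{formal} first integral to a \emph{convergent} one in a neighbourhood of the origin. The vanishing of all coefficients $c$ above is precisely the statement that the generalized focus quantities of \eqref{SYS} vanish, and for the elementary resonant singularity governed by the single relation $\sum_i\zeta^{i-1}=0$ this formal integrability is known to force analytic integrability; I would therefore invoke that result (the $n$-dimensional analogue of the Poincar\'e--Lyapunov mechanism used for the planar $1:-1$ case in \cite{JLR,R,RS}, cf.\ \cite{LPW,W91,W}) to conclude. As an alternative viewpoint to state in a remark, in Poincar\'e--Dulac normal form \eqref{SYS} reads $\dot y_j=y_jP_j(\Phi)$ with $P_j(\Phi)=\zeta^{j-1}+\sum_{d\ge1}c_{j,d}\Phi^{d}$, so $\dot\Phi=\Phi\sum_{d\ge1}C_d\Phi^{d}$ with $C_d=\sum_jc_{j,d}$; the obstructions killed above are exactly these $C_d$, whence $\Phi$ is a first integral of the normal form and pulling it back through the normalizing transformation gives $\Psi$.
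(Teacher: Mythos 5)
Your construction is correct in its core and takes a genuinely different route from the paper. The paper's proof is a two-step reduction: it observes that condition \eqref{Al_p} guarantees the existence of a diagonal matrix $\mc B$ solving $A=\mc B\widetilde T_n\mc B^{-1}$, so that the diagonal change of coordinates \eqref{xTy} turns a $\zeta$-reversible system into a $(\widetilde T_n,\zeta)$-reversible one, and then it simply cites Proposition 11 of \cite{LPW} for the existence of the first integral. You instead build the integral from scratch: the identities $A\mc Z=\zeta\mc Z A$, $\Phi\circ A=\Phi$ and $A F(\xb)=\zeta F(A\xb)$ are all correct consequences of Definition \ref{defZrev}; the identification of $\ker L_0$ with the span of the powers of $\Phi$ (via irreducibility of the cyclotomic polynomial for prime $n$) is exactly the computation the paper performs inside the proof of Theorem \ref{THMinvarM}; and the inductive argument that the resonant coefficient $c$ of $R_m$ satisfies $c=\zeta^{-1}c$, hence vanishes, is sound, as is the preservation of $\sigma$-invariance at each step. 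This buys a self-contained proof that also makes the mechanism visible -- the symmetry kills precisely the resonant obstructions -- which the paper's citation hides.

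The one genuine gap is your final passage from formal to convergent. You assert that for this resonance formal integrability ``is known to force analytic integrability'' and propose to invoke an $n$-dimensional analogue of the Poincar\'e--Lyapunov mechanism; such a result is not established in this generality for $n>2$, and the planar argument does not transfer (the normalizing transformation through which you would pull back $\Phi$ in your alternative remark is itself only formal a priori). The paper sidesteps this entirely: the Remark immediately following Proposition \ref{prop25} states that the first integral ``can be analytic or merely formal'', so the statement only requires the formal integral your induction already produces. If you drop the analyticity claim in your last paragraph (or replace it by the observation that a formal first integral is all that is asserted), your proof is complete.
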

\begin{proof}
For a  matrix  $A$ of the form \eqref{Amatrix} 
and a  nonsingular 
${\mc B}= {\rm diag}[\beta_1,\dots, \beta_n]$,
the equation 
\be \label{ABT}
A={\mc B}\widetilde T_n {\mc B}^{-1}
\ee 
yields 
$$\beta_{i+1}=\beta_i/\alpha_i \quad i=1,\dots, n-1, \quad \beta_n=\alpha_n/\beta_1.$$
Eliminating $\beta_i$ from the above equations, 
we see that \eqref{ABT} has a solution if condition 
\eqref{Al_p} is satisfied. 
Thus, if \eqref{AF} holds, then  system \eqref{yF} is $(\widetilde T_n,\zeta)$-reversible. Therefore, by Proposition 11 of \cite{LPW}, system \eqref{yF} admits a  first integral $\widehat  \Psi(\yb)=y_1y_2\ldots y_n+h.o.t.$ near the origin. Hence system \eqref{SYS} admits a  first integral of the form \eqref{Psi_int} in a neighborhood of the origin.
\end{proof}

\begin{remark}
First integral \eqref{Psi_int} can be analytic or merely formal.    
\end{remark}

In Section \ref{sec_ndim}
we investigate 
$ \zeta$-reversible systems and equivariant systems in 
 a  family  of polynomial systems  \eqref{SYS} 
 and study  an interconnection of $\zeta$-reversibility, equivariancy of  such systems and    invariants of Lie group \eqref{ch3d}.


\section{The two-dimensional  case }\label{sect2}

 {
 Two-dimensional system \eqref{ggs} looks like  a particular case of $n$-dimensional system
 \eqref{nSYS} studied in the next section. However in  the case $n=2$ the cyclic  permutation matrix
$\widetilde T_n$ defined by  \eqref{Tntilde} is an involution. 
For this reason the binomial ideals arising in the two-dimensional  case 
have some special features and admit a simpler consideration than the more 
general ideals arising in the next section. For example, Proposition \ref{pro35}, which is true in the case $n=2$, does not hold in the case $n>2$. 
For these reasons we treat the cases $n=2$ and $n>2$ separately. 
}

 
\subsection{Invariants of system \eqref{ggs}} 


Let $k$ be a field,    $n, d$ be positive integers,  where $n\geq d$, and $\mathfrak{A}$ be  an $d\times n$-matrix with integer elements and rank $d$. We can also write 
 $\mathfrak{A}=[\mathfrak{a}_1,\dots, \mathfrak{a}_n]$, where $\mathfrak{a}_i$ represents the $i$-th column of the matrix.
 
Let $t_1,\ldots,t_d,x_1,\ldots,x_n,z_1,\ldots,z_n$ be variables.  Fix any elimination monomial order with $\{t_1,\ldots,t_d\}\succ \{ x_1,\ldots,x_n\}\succ \{z_1,\ldots,z_n\}$ and  consider the $k$-algebra homomorphism
\begin{equation}\label{Hom_St}
\begin{array}{c} 
    {k}[x_1,\ldots,x_n,z_1,\ldots,z_n]\ \longrightarrow\ {k}[t_1,\ldots,t_d,t_1^{-1},\ldots,t_d^{-1},z_1,\ldots,z_n]\\
    x_i\ \mapsto\  z_i \displaystyle\prod_{j=1}^d t_j^{a_{ji}},\ \ z_i\ \mapsto z_i\ \ \text{ for all }\ i=1,\ldots, n.
\end{array}
\end{equation}

 The Hilbert basis $H_\mathfrak{A}$ of the  monoid ${\mathcal M}_\mathfrak{A}=\{\nu=(\nu_1,\dots, \nu_n)  \in {\mathbb N}_0^n\ :\  \mathfrak{A}\cdot \nu^\top=0\}$ can be obtained using the following algorithm, which is Algorithm 1.4.5 of
 \cite{St-AIT}.
\begin{alg}\label{alg1}
\begin{enumerate}
    \item[1)] Compute the reduced Gr\"obner basis $\mathcal G$ with respect to $\succ$ for the ideal 
$$
 \left \langle x_i -  z_i \displaystyle\prod_{j=1}^d t_j^{a_{ji}}\ : \ i=1,\dots, n\right\rangle. 
$$     
    \item[2)] The Hilbert basis $H_{
    \mathfrak{ A}}$ of ${\mathcal M}_\mathfrak{A}$ consists of all vectors $\nu$ such that $\mathbf{x}^{\nu}-\mathbf{z}^{\nu}$ appears in $\mathcal G$.
\end{enumerate}
\end{alg}

{

Let 
$k[{\bf t},{\bf t}^{-1}]:= k[t_1^\pm,\dots, t_d^\pm]$  be the Laurent polynomial algebra over $k$ in the variables $t_1 ,\dots  t_d $. Let us use the notation ${{\bf t}}^{\mathfrak{a}_i}=\displaystyle\prod_{j=1}^d t_j^{a_{ji}}$ for $i=1,\dots, n$, and define a $k$-algebra homomorphism
$$
\pi : k[\xb]\to k[{\bf t},{\bf t}^{-1}] \qquad \text{by} \qquad x_i\to {{\bf t}}^{\mathfrak{a}_i}. 
$$
The image of $\pi$, denoted by  $k[\mathfrak{A}]$, is the $k$-subalgebra  of  $k[{\bf t}, {\bf t}^{-1}]  $ and is called   the toric
ring of $\mathfrak{A}.$ The kernel of $\pi$ is denoted $I_\mathfrak{A}$ and called the {\it toric ideal} of $\mathfrak{A}$.

The Lawrence lifting of an integer matrix $\mathfrak{A}\in \Z^{d\times n} $ 
is the matrix 
$$
 \Lambda(\mathfrak{A})  = 
  \begin{pmatrix}
    \mathfrak{A} & 0\\
   E_n & E_n
   \end{pmatrix},
$$
where $E_n$ denotes the $n\times n$ identity matrix. It is clear that the 
\gb  $\mathcal{G}$ returned by Algorithm \ref{alg1} defines  the toric ideal $I_{\Lambda(\mathfrak{A})}$, which is the kernel of homomorphism \eqref{Hom_St}.

Denote by $\mathfrak{M}$ the $1\times 2\ell$ matrix on the left hand side of \eqref{lsib}, that is, 
\begin{equation} \label{lsib_M} 
\mathfrak{M}=[ (p_1 -q_1) \ 
\dots \ (p_\ell-q_\ell)\  
 (q_\ell-p_\ell) \ 
 \dots\ (q_1-p_1)].
\end{equation} 


Then  the kernal of the homomorphism 
given by
 \be \label{IrevT}
 a_{p_sq_s} \mapsto  y_s t^{q_s-p_s},\quad  b_{q_sp_s} \mapsto y_{2 \ell-s+1} t^{p_s-q_s}, 
 \qquad  s=1,\dots,\ell, \quad y_i\mapsto y_i,  \quad i=1,\dots, 2\ell, 
\ee
is the toric ideal $I_{\Lambda(\mathfrak{M})}$.


For  $\yb =(y_1, \dots, y_{2\ell})$,   we denote  by  $\yb^{\nu}$  the monomial $y_1^{\nu_1}y_2^{\nu_2}\ldots y_{2\ell}^{\nu_{2\ell}}$.
 We are interested in the 
Hilbert basis $H_\mathfrak{M}$ of the monoid $\mathcal M$ of solutions to  equation \eqref{lsib}.  
\begin{thm} \label{th_B}
Let 
$\mathcal{G}_{\mathfrak{M}}$ be the \gb of $ I_{\Lambda(\mathfrak{M})}$ 
computed according to 
Algorithm \ref{alg1}, in particular  $Y=\left\{ [\nu] - \yb ^{\nu}\ :\ \nu\in H_\mathfrak{M} \right\}$ is a subset of $\mathcal{G}_{\mathfrak{M}}$.
Then ${ G}=\{[\nu]-[\hat{\nu}]\ :
\ \nu\in H_\mathfrak{M}, \nu\neq \hat{\nu}\}$ is a Gr\"obner basis of $I_S$.
\end{thm}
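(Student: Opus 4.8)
The plan is to exploit the substitution that turns the auxiliary variables $y_i$ back into the parameters. Writing the ordered parameter tuple as $(c_1,\dots,c_{2\ell})=(a_{p_1q_1},\dots,a_{p_\ell q_\ell},b_{q_\ell p_\ell},\dots,b_{q_1p_1})$, so that $[\nu]=\prod_i c_i^{\nu_i}$, let $\sigma\colon\C[a,b,y_1,\dots,y_{2\ell}]\to\C[a,b]$ be the $\C[a,b]$-algebra surjection $y_i\mapsto c_{2\ell+1-i}$. Since $\yb^{\nu}=\prod_i y_i^{\nu_i}$ is sent to $\prod_i c_{2\ell+1-i}^{\nu_i}=[\hat\nu]$, we get $\sigma([\nu]-\yb^{\nu})=[\nu]-[\hat\nu]$; that is, $G=\sigma(Y)$ up to the trivial images coming from the self-paired $\nu=\hat\nu$. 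First I would record that $\mathfrak M$ is anti-palindromic, $\mathfrak M_i=-\mathfrak M_{2\ell+1-i}$, so that the reversal $\nu\mapsto\hat\nu$ satisfies $\mathfrak M\hat\nu^\top=-\mathfrak M\nu^\top$ and is therefore an automorphism of the monoid $\mathcal M$; by uniqueness of the Hilbert basis it permutes $H_{\mathfrak M}$, so $G$ is well defined. It then suffices to verify the two halves of the Gröbner criterion: $G\subseteq I_S$, which is immediate since $H_{\mathfrak M}\subseteq\mathcal M$, and that the leading monomials of $G$ generate $\operatorname{in}(I_S)$.

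For the second half I would first pass to a lattice description. Let $L_S=\ker_{\Z}\mathfrak M\cap\{w:\hat w=-w\}\subseteq\Z^{2\ell}$, a saturated lattice. The key structural lemma is that for every $w\in L_S$ the positive part $w_+$ lies in $\mathcal M$ and $\widehat{w_+}=w_-$: indeed $\hat w=-w$ gives $\widehat{w_+}=(\hat w)_+=w_-$, and then anti-palindromy forces $0=\mathfrak M w^\top=\mathfrak M w_+^\top-\mathfrak M w_-^\top=2\,\mathfrak M w_+^\top$, so $w_+\in\mathcal M$. Consequently each lattice binomial $[w_+]-[w_-]$ equals the generator $[w_+]-[\widehat{w_+}]$, which gives $I_{L_S}\subseteq I_S$; the reverse inclusion holds because every defining binomial $[\nu]-[\hat\nu]$ has exponent difference $\nu-\hat\nu\in L_S$. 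Hence $I_S=I_{L_S}$ is the lattice ideal of $L_S$, in particular a binomial ideal, and so $\operatorname{in}(I_S)$ is generated by the leading monomials of the binomials it contains.

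The decisive step is then a direct initial-term computation. Take any binomial $[\alpha]-[\beta]\in I_S$ with $[\alpha]\succ[\beta]$ and put $w=\alpha-\beta\in L_S$; then $[w_+]$ divides $[\alpha]$ and $w_+\in\mathcal M$, so I would decompose $w_+=\sum_k\lambda^{(k)}$ into Hilbert basis elements $\lambda^{(k)}\in H_{\mathfrak M}$. Because $[\alpha]\succ[\beta]$ is equivalent to $[w_+]\succ[w_-]=[\widehat{w_+}]=\prod_k[\widehat{\lambda^{(k)}}]$, multiplicativity of the monomial order forbids every factor from satisfying $[\lambda^{(k)}]\preceq[\widehat{\lambda^{(k)}}]$; hence some $\lambda:=\lambda^{(k_0)}$ has $[\lambda]\succ[\hat\lambda]$. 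For this $\lambda$ we have $\lambda\neq\hat\lambda$, so $g_\lambda=[\lambda]-[\hat\lambda]\in G$ with $\operatorname{in}(g_\lambda)=[\lambda]$, and $[\lambda]$ divides $[w_+]$, hence $[\alpha]$. Thus every leading monomial of $I_S$ is a multiple of a leading monomial of $G$, which would complete the proof; note that the argument is insensitive to the chosen monomial order, so $G$ is in fact a universal Gröbner basis.

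The main obstacle is the passage from the generator-level definition of $I_S$ to the saturated-lattice picture together with the lemma $w\in L_S\Rightarrow w_+\in\mathcal M$: this is precisely where the anti-palindromy of $\mathfrak M$ (equivalently, that $\widetilde T_2$ is an involution) is used, and it is what fails for $n>2$. One could instead try to transport the Gröbner property of $\mathcal G_{\mathfrak M}$ directly through $\sigma$, using a $\sigma$-compatible refinement of the elimination order and the order-independence of Gröbner bases for Lawrence liftings; the trouble there is controlling cancellations under the non-injective map $\sigma$, and the lattice argument above is designed exactly to bypass that bookkeeping.
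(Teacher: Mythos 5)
Your proof is correct, and it takes a genuinely different route from the paper's. The paper first shows that $G$ generates $I_S$ by repeatedly applying the identity \eqref{divmn}, namely $[\mu]-[\hat\mu]=\tfrac12([\sigma]+[\hat\sigma])([\nu]-[\hat\nu])+\tfrac12([\nu]+[\hat\nu])([\sigma]-[\hat\sigma])$ with $\nu\in H_{\mathfrak{M}}$ dividing $\mu$ and $\sigma=\mu-\nu$, descending in deglex until the process terminates; it then upgrades ``generating set'' to ``Gr\"obner basis'' by citing a coprimality criterion for binomial generators (Corollary 1.30 of \cite{HHO}), the relevant coprimality of $[\nu]$ and $[\hat\nu]$ being the content of Proposition \ref{pro_m}. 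You never argue generation directly: you first identify $I_S$ with the lattice ideal of $L_S=\{w:\mathfrak{M}w^\top=0,\ \hat w=-w\}$ --- which is essentially Proposition \ref{pro35}, proved separately and later in the paper --- then use the standard fact that a binomial ideal has a binomial reduced Gr\"obner basis to reduce the initial-ideal computation to binomials $[\alpha]-[\beta]$ with $\alpha-\beta\in L_S$, and finally extract from a Hilbert-basis decomposition of $w_+$ a factor $\lambda$ with $[\lambda]\succ[\hat\lambda]$, whose associated element of $G$ has leading monomial dividing $[\alpha]$. Your key lemma, that $w\in L_S$ forces $w_+\in\mathcal{M}$ and $\widehat{w_+}=w_-$ (via the anti-palindromy of $\mathfrak{M}$, i.e.\ the involutivity special to $n=2$), is correct and plays the structural role that Proposition \ref{pro_m} plays in the paper. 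What your route buys: the argument uses only multiplicativity of the order, so it shows $G$ is a \emph{universal} Gr\"obner basis, and it unifies the theorem with Proposition \ref{pro35} instead of treating them separately. What the paper's route buys: it is more self-contained, needing only the division identity and a ready-made Gr\"obner criterion rather than the structure theory of lattice and binomial ideals. The two facts you leave implicit (binomial ideals have binomial reduced Gr\"obner bases; a binomial lies in a lattice ideal iff its exponent difference lies in the lattice) are standard and do not constitute gaps, though for a polished write-up they deserve a citation.
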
 }
\begin{proof}
We first prove  that ${ G}$ is a  
basis of the ideal $I_S$.
Let $ [\mu]-[\hat{\mu}]\neq 0 $ ($ [\mu]-[\hat{\mu}]\not \in G  $) be an element of  the generating set $$\{ [\nu]- [\hat \nu] \ : \ \nu \in \mathcal{M}\} $$ of the ideal  ${ I}_S$. 
We show that $[\mu]-[\hat{\mu}]\in \la  G \ra.$
Since  $\mu \in \mcm$, $H_\mathfrak{M}$ is the  Hilbert basis of $\mcm$ 
and   $\mu\neq {\hat \mu}$, 
there exists a  $\nu\in H_\mathfrak{M}$ such that $[\nu]\mid [\mu]$.  Let $\sigma_1=\mu-\nu.$ Then $\sigma_1\in \mathcal{M}$ and  
\be \label{divmn}
[\mu]-[\hat \mu] =\frac{1}{2} ([\sigma_1]+[\hat \sigma_1]) ([\nu]-[\hat \nu])+ \frac{1}{2} ([\nu]+[\hat \nu]) 
([\sigma_1]-[\hat \sigma_1]), 
\ee
where  $\sigma_1<\mu$ with respect to the degree lexicographic monomial order (the deglex). 

If $[\sigma_1]-[\hat \sigma_1]\in G$, then 
$
 [\mu]-[\hat \mu]\in \la G \ra.  
$
If not, there is a $\nu_1\in H_\mathfrak{M} $  such that 
$[ \nu_1 ] | [\sigma_1] $. Letting $\sigma_2= \sigma_1-\nu_1$ and  continuing  the process we obtain  a sequence 
\be\label{seq_sig}
\mu > \sigma_1>\sigma_2>\sigma_3>\dots.
\ee 
If for  some $\sigma_k$  it holds $[\sigma_k]-[\hat \sigma_k]\in G$,  we are done. 
Otherwise the sequence \eqref{seq_sig} is infinite, which is impossible since the deglex is a term order
and, therefore, it is a well-ordering.
Thus,  $G$ is a basis for $I_S$. 

Since all monomials appearing in $G$ are relatively prime, $G$ is a 
Gr\"obner basis of $I_S$ (see e.g. Corollary 1.30 in \cite{HHO}).
\end{proof}

The set $ G$ contains both binomials $[\nu]-[\hat{\nu}]$ and $[\hat{\nu}]-[\nu]$. For every such pair of binomials, we delete from $G$ the binomial for which the leading coefficient is  $-1$  and denote the obtained set by ${ G}_{R}$. 
\begin{pro}\label{}
The set ${ G}_R$ is a reduced Gr\"obner basis of ${ I}_S$.
\end{pro}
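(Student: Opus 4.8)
The plan is to show that $G_R$ satisfies the two defining properties of a reduced Gröbner basis, having already established in Theorem~\ref{th_B} that $G=\{[\nu]-[\hat\nu]:\nu\in H_\mathfrak{M},\ \nu\neq\hat\nu\}$ is a Gröbner basis of $I_S$ whose elements have relatively prime monomials. First I would fix the deglex order used in the proof of Theorem~\ref{th_B} and observe that deleting one binomial from each pair $\{[\nu]-[\hat\nu],[\hat\nu]-[\nu]\}$ does not change the ideal nor the leading term ideal: the two binomials in a pair are negatives of one another, so they have the \emph{same} leading term (the larger of $[\nu]$ and $[\hat\nu]$), and retaining the one with leading coefficient $+1$ simply normalizes each generator so that its leading coefficient is $1$. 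Thus $G_R$ still generates $I_S$, still has the property that $\operatorname{LT}(G_R)$ generates $\operatorname{LT}(I_S)$ (since we kept one representative with the same leading term from each pair, and pairs related by $\nu\leftrightarrow\hat\nu$ give the same leading monomial), and every element is monic. This gives that $G_R$ is a Gröbner basis.

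Next I would verify the reducedness conditions: for each $g\in G_R$, no monomial of $g$ lies in the ideal generated by the leading terms of the \emph{other} elements of $G_R$. Write a typical element as $g=[\nu]-[\hat\nu]$ with $[\nu]=\operatorname{LT}(g)$. There are exactly two monomials to check, $[\nu]$ and $[\hat\nu]$. The key point, which I would lean on, is that all monomials appearing across $G$ (hence $G_R$) are relatively prime within each binomial, but more importantly that the leading monomials coming from distinct Hilbert-basis elements are minimal generators of the monoid $\mathcal M$. Concretely, the leading terms $\{\operatorname{LT}(g):g\in G_R\}$ correspond to the monomials $[\nu]$ (or $[\hat\nu]$) attached to elements of the Hilbert basis $H_\mathfrak{M}$, and by minimality of the Hilbert basis no such monomial divides another. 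Therefore for $g'=[\nu']-[\hat\nu']\in G_R$ with $g'\neq g$, the leading term $\operatorname{LT}(g')$ cannot divide $[\nu]=\operatorname{LT}(g)$, and one must also check it cannot divide the trailing monomial $[\hat\nu]$.

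The main obstacle, and the step requiring genuine care, is showing that no leading monomial of another generator divides the \emph{trailing} term $[\hat\nu]$ of $g$. Divisibility of $[\hat\nu]$ by some $\operatorname{LT}(g')$ is not immediately ruled out by minimality of the Hilbert basis applied to leading terms alone, because $[\hat\nu]$ is the involution of a Hilbert-basis monomial rather than a Hilbert-basis monomial itself. Here I would exploit the symmetry of the setup: the involution $\nu\mapsto\hat\nu$ maps $\mathcal M$ bijectively to itself and maps $H_\mathfrak{M}$ to $H_\mathfrak{M}$, so $\hat\nu\in H_\mathfrak{M}$ whenever $\nu\in H_\mathfrak{M}$. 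Consequently $[\hat\nu]$ is itself a Hilbert-basis monomial, and the same minimality argument applies: if $\operatorname{LT}(g')\mid[\hat\nu]$ with $\operatorname{LT}(g')$ a Hilbert-basis monomial, then by minimality $\operatorname{LT}(g')=[\hat\nu]$, forcing $g'$ to be (a sign of) $[\hat\nu]-[\nu]=-g$, contradicting $g'\neq g$. Assembling these observations, every monomial of every $g\in G_R$ is reduced with respect to the other leading terms, and combined with monicity this yields that $G_R$ is the reduced Gröbner basis of $I_S$.
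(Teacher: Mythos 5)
Your argument is correct and follows essentially the same route as the paper: the paper's proof likewise observes that for $[\mu]-[\hat\mu]\in G_R$ both $\mu$ and $\hat\mu$ lie in the Hilbert basis of $\mathcal M$, so divisibility of either monomial of one generator by the leading monomial of another would contradict the minimality of the Hilbert basis. You merely make explicit two points the paper leaves implicit, namely that deleting one binomial from each $\pm$ pair preserves the leading-term ideal, and that the involution maps $H_\mathfrak{M}$ to itself so that the trailing monomial $[\hat\nu]$ is also a Hilbert-basis monomial.
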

\begin{proof}
Assume that for $[\mu]-[\hat{\mu}] \in {G}_R$ there exists $[\nu]-[\hat{\nu}]\in { G}_R$ such that 
\be \label{munu}
[\nu] \mid [\mu] \quad {\rm or}\quad  [\nu]\mid {[\hat{\mu}]}.
\ee 
By the definition of the set $G_R$, these $\nu,\mu,{\hat{\mu}}$ are in the Hilbert basis of $\mathcal M$, thus 
\eqref{munu}  contradicts  the minimality of the Hilbert basis.
\end{proof}

When we pass from $Y$ to $G$, the {\it self-conjugate} binomials, that is, binomials for which $\nu=\hat \nu$,  disappear.
Hence,  the Hilbert basis  $H_\mathfrak{M}$ is the union of the exponents that appear in $G_R$ and the exponents of the self-conjugate binomials, 
$$
{ H_\mathfrak{M}}=\{ \nu, \hat \nu   \, : \ [\nu]-[\hat\nu]\in G_R\}\cup \{\mathbf{e}_{j}+\mathbf{e}_{2\ell-j+1}\, :\ j=1,\ldots,\ell \ {\rm and}\ \pm ([\mathbf{e}_{j}]-[\mathbf{e}_{2\ell-j+1}])\notin { G_R}\}. 
$$

\begin{pro}\label{pro_m}
Let  $[\nu]-[\hat{\nu}]\in { G}_R$. Then there is no  $\theta \in \N_0^{2\ell}$ such that 
\be \label{eq18}
{[\theta] }\, \mid\left( [\nu]-[{\hat \nu}]\right).	
\ee
\end{pro}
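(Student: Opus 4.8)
The plan is to reformulate the statement as the assertion that the two monomials $[\nu]$ and $[\hat\nu]$ are coprime, and then to produce a common factor's existence as a contradiction with the irreducibility of Hilbert-basis vectors. A nonzero monomial $[\theta]$ (the case $\theta=0$ being vacuous, since then $[\theta]=1$) divides the binomial $[\nu]-[\hat\nu]$ precisely when it divides each of $[\nu]$ and $[\hat\nu]$, i.e. when $[\nu]$ and $[\hat\nu]$ share a variable. Since the $i$-th parameter occurs in $[\nu]$ with exponent $\nu_i$ and in $[\hat\nu]=[(\nu_{2\ell},\dots,\nu_1)]$ with exponent $\nu_{2\ell-i+1}$, such a shared variable exists if and only if there is an index $i$ with $\nu_i>0$ and $\nu_{2\ell-i+1}>0$. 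I would assume, toward a contradiction, that such an index exists; after relabeling (replacing $i$ by $2\ell-i+1$ if necessary) I may take $1\le j\le\ell$ with $\nu_j>0$ and $\nu_{2\ell-j+1}>0$.

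The crux of the argument, and the step I would establish first, is that the cross-vector $\mathbf{e}_j+\mathbf{e}_{2\ell-j+1}$ lies in $\mathcal M$ for every $j=1,\dots,\ell$. Indeed, in $L$ the coefficient of the $j$-th coordinate is $\binom{p_j}{q_j}$ and the coefficient of the $(2\ell-j+1)$-th coordinate is $\binom{q_j}{p_j}$, so $L(\mathbf{e}_j+\mathbf{e}_{2\ell-j+1})=\binom{p_j+q_j}{p_j+q_j}$; since $p_j+q_j\ge 1$ by \eqref{def:S}, this is of the form $\binom{K}{K}$ with $K>0$, whence $\mathbf{e}_j+\mathbf{e}_{2\ell-j+1}\in\mathcal M$ (equivalently, it satisfies \eqref{lsib}). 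Because $\nu_j>0$ and $\nu_{2\ell-j+1}>0$, we have $\nu\ge \mathbf{e}_j+\mathbf{e}_{2\ell-j+1}$ coordinatewise, and since $\mathcal M$ is the set of non-negative integer solutions of the linear equation \eqref{lsib} it is closed under differences that remain non-negative; hence $\rho:=\nu-\mathbf{e}_j-\mathbf{e}_{2\ell-j+1}\in\mathcal M$.

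From this I would conclude as follows. The equality $\nu=(\mathbf{e}_j+\mathbf{e}_{2\ell-j+1})+\rho$ exhibits $\nu$ as a sum of two elements of $\mathcal M$ with the first summand nonzero. As $\nu\in H_\mathfrak{M}$ is irreducible in $\mathcal M$ (the Hilbert basis consists exactly of the irreducible monoid elements), the second summand must vanish, so $\nu=\mathbf{e}_j+\mathbf{e}_{2\ell-j+1}$. But this vector is self-conjugate, $\widehat{\mathbf{e}_j+\mathbf{e}_{2\ell-j+1}}=\mathbf{e}_{2\ell-j+1}+\mathbf{e}_j=\nu$, which forces $[\nu]-[\hat\nu]=0$ and contradicts $[\nu]-[\hat\nu]\in G_R$, whose elements are nonzero (i.e. have $\nu\neq\hat\nu$). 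Therefore no nonzero $[\theta]$ divides $[\nu]-[\hat\nu]$. The only genuine content is the coprimality-forcing computation that the self-conjugate cross-vectors $\mathbf{e}_j+\mathbf{e}_{2\ell-j+1}$ belong to $\mathcal M$; this is exactly where the hypothesis $p_j+q_j\ge1$ from \eqref{def:S} enters. Everything else is the definition of irreducibility for Hilbert-basis vectors, so I expect no substantial obstacle beyond getting the index bookkeeping in $\hat\nu$ and in the coefficients of $L$ right.
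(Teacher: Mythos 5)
Your proof is correct and follows essentially the same route as the paper: a nontrivial monomial dividing both $[\nu]$ and $[\hat\nu]$ yields a nonzero self-conjugate element of $\mathcal{M}$ dividing $\nu$, which contradicts the minimality of the Hilbert basis unless $\nu$ itself is self-conjugate, contradicting $\nu\neq\hat\nu$. The only (minor) difference is that you exhibit this element as the atomic cross-vector $\mathbf{e}_j+\mathbf{e}_{2\ell-j+1}$ at a shared index, whereas the paper uses the exponent of $\gcd([\kappa],[\hat\kappa])$; your variant is if anything slightly more watertight, since it does not rely on the cofactor having the conjugate form $[\mu]-[\hat\mu]$.
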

\begin{proof}
We note that for any monomial $[\kappa] $  the exponent of  $GCD([\kappa],[\hat \kappa] )$ is a self-conjugate element of $\mcm$. This holds since   $GCD([\kappa], [\hat \kappa])=[\mu]$, where $\mu_i=\min\{\kappa_i,\kappa_{2\ell-i+1}\}$ for  $i=1,\ldots,2\ell$. Thus $\mu_i=\mu_{2\ell-i+1}$ for all $i=1,\ldots,\ell$ yielding  $\mu=\hat\mu\in {\mathcal M}$.  

Assume there exist $\nu\in H_{\mathfrak{M}}$ and $\kappa $ such that 
$$
[\nu]-[\hat{\nu}]=
{[\kappa] }  \left( [\mu]-[{\hat \mu}]\right)	
$$
for some  $[\mu]-[\hat{\mu}]$.  Since $[\kappa] | [\nu] $ and $[\kappa]|[\hat \nu]$,  $[\hat  \kappa] | [\nu] $ and $[\hat \kappa] |[\hat \nu]$, 
we conclude that $$ GCD([\kappa], [\hat \kappa]) | [\nu].$$ Since the exponent of  $GCD([\kappa], [\hat \kappa])$ is in $\mathcal{M}$ 
this contradicts to the minimality of $H_\mathfrak{M}$. 
\end{proof}

We now discuss some algebraic properties of the ideal $I_S$. 

For a field $k$, we denote by $k[\xb]:= k[x_1 ,\dots, x_n ]$ 
 the polynomial ring in  the variables $x_1 ,\dots , x_n$.
A subgroup $\mathsf{L}$ of the free abelian group $\Z^n$ is called a {\it lattice}. By definition,   a {\it basis} of a lattice $\mathsf{L}$ is a basis of the free abelian group $\mathsf{L}$. For $m \in \Z^{n}$ we  write 
$$
m=m^+-m^-,
$$
where $m^+, m^- \in \N^{n}_0$, and  denote by $f_m$ the binomial 
$\xb^{m^+}-\xb^{m^-}$. The ideal of $k[\xb]$ generated by all binomials $f_m$, where 
  $m\in \mathsf{L}$, 
 is the so-called {\it{lattice ideal}} of $\mathsf{L}$ \cite{HHO}.
 
In our case $n=2 \ell$ and the polynomial ring is $\C[a,b]$. Let $\mathsf{L}$ be the set consisting of   all  elements $\nu - \hat \nu $, where $\nu \in \mcm$. 
Since $\mathcal M$ is a monoid  closed under the action of involution on its elements, $\mathsf{L}$ is a subgroup of ${\mathbb Z}^{2\ell}$. 
Let us denote by $I_\msl$ the lattice ideal of  $\msl$. 

\begin{pro}\label{pro35}
$I_S=I_\msl$.
\end{pro}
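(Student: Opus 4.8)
The plan is to establish the two inclusions $I_S\subseteq I_\msl$ and $I_\msl\subseteq I_S$ separately, and the single structural fact I would isolate first is the antisymmetry of the defining form $\mathfrak{M}$ under the involution: from \eqref{lsib_M} the coefficient in position $2\ell-j+1$ is the negative of the coefficient in position $j$, so $\mathfrak{M}\cdot\hat\nu=-\mathfrak{M}\cdot\nu$ for every $\nu\in\np^{2\ell}$. The immediate corollary I would record — and which drives both inclusions — is that any self-conjugate vector $\gamma=\hat\gamma\in\np^{2\ell}$ automatically lies in $\mcm$: antisymmetry gives $\mathfrak{M}\cdot\gamma=\mathfrak{M}\cdot\hat\gamma=-\mathfrak{M}\cdot\gamma$, hence $\mathfrak{M}\cdot\gamma=0$. (This self-conjugacy-plus-membership is essentially what was already observed inside the proof of Proposition \ref{pro_m}.)

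For $I_S\subseteq I_\msl$ I would take a defining generator $[\nu]-[\hat\nu]$ with $\nu\in\mcm$, set $\gamma_i=\min(\nu_i,\hat\nu_i)$, and note that $\gamma$ is self-conjugate with $m:=\nu-\hat\nu=m^+-m^-$, where $m^+=\nu-\gamma$ and $m^-=\hat\nu-\gamma$ have disjoint support (so they are genuinely the positive and negative parts of $m$). Then I would factor out the common monomial to get $[\nu]-[\hat\nu]=[\gamma]\bigl([m^+]-[m^-]\bigr)=[\gamma]\,f_m$. Since $m\in\msl$ by definition of $\msl$ and $[\gamma]\in\C[a,b]$, this lands in $I_\msl$.

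For the reverse inclusion $I_\msl\subseteq I_S$ I would take a generator $f_m=\xb^{m^+}-\xb^{m^-}$ with $m\in\msl$, so $m=\nu-\hat\nu$ for some $\nu\in\mcm$. With $\gamma_i=\min(\nu_i,\hat\nu_i)$ as above, $m^+=\nu-\gamma$; because $\gamma$ is self-conjugate it lies in $\mcm$ by the opening observation, so $\mathfrak{M}\cdot m^+=\mathfrak{M}\cdot\nu-\mathfrak{M}\cdot\gamma=0$ and $m^+\ge 0$, whence $m^+\in\mcm$. Moreover $\widehat{m^+}=\hat\nu-\gamma=m^-$, so $f_m=[m^+]-[\widehat{m^+}]$ is literally one of the defining generators of $I_S$, giving $f_m\in I_S$.

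The main obstacle — indeed the only step that is not pure bookkeeping with positive and negative parts — is the passage through $\gamma$: after clearing the common factor one must verify that the surviving exponent $m^+$ is still a solution of \eqref{lsib}, i.e.\ still belongs to $\mcm$. This is exactly what the antisymmetry of $\mathfrak{M}$ guarantees, since it forces every self-conjugate $\gamma$ into $\mcm$; without that symmetry the factored binomial $f_m$ need not be a single generator of $I_S$ and the argument would break.
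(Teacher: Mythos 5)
Your proof is correct and follows essentially the same route as the paper's: both directions hinge on splitting $\nu$ and $\hat\nu$ into a common self-conjugate part $\gamma=\min(\nu,\hat\nu)$ (which lies in $\mcm$, by the antisymmetry of $\mathfrak{M}$ under the involution) plus disjointly supported remainders $m^+=\nu-\gamma$, $m^-=\hat\nu-\gamma$. The only difference is cosmetic: for $I_S\subseteq I_\msl$ the paper invokes the disjoint-support property of the reduced Gr\"obner basis $G_R$ from Proposition \ref{pro_m}, whereas you factor $[\gamma]$ directly out of an arbitrary generator $[\nu]-[\hat\nu]$, which is slightly more self-contained.
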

\begin{proof}
We first show that $I_S\subset I_\msl$.  If $f=[\nu]-[\hat \nu]$ is a 	binomial from the \gb $G_R$ of $I_S$, then, 
by Proposition \ref{pro_m}, the monomials $[\nu]$ and $[\hat \nu]$ have a disjoint support. Therefore $f\in I_\msl$ yielding 
$I_S\subset I_\msl$. 

To prove the opposite  inclusion, let $\nu \in \mcm$. If 
$\nu $ and $\hat \nu $ have a disjoint support, then $f_{\nu-{\hat \nu}}=[\nu]-[\hat \nu] $, and hence  $f_{\nu-{\hat \nu}}\in I_S$. 
Otherwise, there is $\mu \in \mcm $ and $\kappa \in \mcm$ such that $\nu=\mu+\kappa$,  $\hat \nu= \mu +\hat \kappa$
and $\kappa $ and $\hat \kappa$ have a disjoint support.
But then  $f_{\kappa-{\hat \kappa}}=[\kappa]-[\hat \kappa] \in I_S $. Therefore $f_{\nu-{\hat \nu}}
=[\mu]  ([\kappa]  - [\hat \kappa]) \in I_S$. 
\end{proof}

\begin{cor}  
The Sibirsky ideal $I_S$ is a lattice ideal. 
\end{cor}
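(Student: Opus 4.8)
The plan is to read the corollary off directly from Proposition \ref{pro35}, since all the substantive work is already contained there. By definition, an ideal of $\C[a,b]$ is a \emph{lattice ideal} precisely when it coincides with the lattice ideal $I_\msl$ of some lattice $\msl \subseteq \Z^{2\ell}$, that is, with the ideal generated by the binomials $f_m = \xb^{m^+} - \xb^{m^-}$ for $m \in \msl$. Thus the only thing to verify is that $I_S$ arises in this form.

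First I would recall that $\msl = \{\nu - \hat\nu : \nu \in \mcm\}$ has already been verified to be a subgroup of $\Z^{2\ell}$: this uses that $\mcm$ is a monoid closed under the involution $\nu \mapsto \hat\nu$, so that $\msl$ is closed under addition and under negation. Hence $\msl$ is a lattice and $I_\msl$ is, by construction, a lattice ideal. Then I would simply invoke the equality $I_S = I_\msl$ established in Proposition \ref{pro35}, which immediately identifies $I_S$ with a lattice ideal and completes the argument.

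There is no genuine obstacle at this stage; the corollary is a formal restatement of Proposition \ref{pro35} in the language of lattice ideals. The real content has already been absorbed into that proposition, whose nontrivial direction is the inclusion $I_\msl \subseteq I_S$: for a general $\nu \in \mcm$ whose support overlaps that of $\hat\nu$, one must split off the common factor $\mu$ and reduce to a binomial $[\kappa] - [\hat\kappa]$ with $\kappa$ and $\hat\kappa$ of disjoint support, which lies in $I_S$ by the description of the reduced Gr\"obner basis $G_R$. Once that is in hand, the present corollary follows with no further effort.
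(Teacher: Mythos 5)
Your proposal is correct and matches the paper exactly: the corollary is stated immediately after Proposition \ref{pro35} with no separate proof, precisely because $\msl$ is already shown to be a subgroup of $\Z^{2\ell}$ (hence a lattice) and $I_S=I_\msl$ identifies $I_S$ as its lattice ideal. Your observation that the only substantive content lies in the inclusion $I_\msl\subseteq I_S$ within the proposition is also accurate.
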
   

{
\begin{remark}
    The fact that $I_S$ is a lattice ideal can be derived from the results of \cite{JLR} and \cite{HHO}, but we have presented a simple straightforward proof. 
\end{remark}
}

\subsection{Time-reversibility in system \eqref{ggs}} \label{sub32}

We now discuss a relation of the ideals introduced above to the time-reversibility. 
From  condition \eqref{inv_gen}, we see that system \eqref{ggs}
 is time-reversible with respect to transformation \eqref{xy} 
 if and only if 
 \be \label{abr}
b_{q_sp_s} =  \alpha^{p_s-q_s} a_{p_sq_s} ,  \quad  a_{p_sq_s} = b_{q_sp_s} \alpha^{q_s-p_s}\ \ 
 \text{ for } \   s=1,\dots, \ell.
\ee 

To find the set of all time-reversible systems in
the space of parameters  
$$
(a_{p_1q_1},\dots,a_{p_\ell q_\ell},b_{q_\ell p_\ell},\dots,b_{q_1p_1})\in \C^{2\ell} 
$$
of  system \eqref{ggs}   we need to find the variety of the elimination ideal
$$
\left\la  a_{p_sq_s} - b_{q_sp_s} \alpha^{q_s-p_s},\  b_{q_sp_s} - \alpha^{p_s-q_s} a_{p_sq_s}\ :\ s=1,\dots,  \ell\right\ra \cap \C[a,b]. 
$$ {
Clearly, the variety of this ideal is the same as the variety of 
\be \label{defI}
\mathcal{I}=J\cap \C[a,b], 
\ee
where
$$ 
J= \la a_{p_sq_s} - y_s t^{q_s-p_s},\quad  b_{q_sp_s} - y_{s}\ :\ s=1,\dots,\ell \ra. 
$$

We denote by $\mathcal{R}$ the set in   $\C^{2\ell}$  that corresponds to  
the set of all time-reversible systems of the form \eqref{ggs}.
Clearly, the Zariski closure  of the set $\mathcal{R}$ is the variety of the ideal $\mathcal{I}$.
Since $\mathcal{I}$ is a toric ideal, it is   a prime ideal.

The following theorem provides another way to compute 
the ideal $\mathcal{I}$ giving a characterization of $\mathcal{I}$ as a 
kernel of another rational map. 

\begin{thm}\label{th_2dim_rev}
 The ideal  $\mathcal{I}$ is the kernel of the polynomial map $\phi$ defined by 
    \be \label{Irev}
 a_{p_sq_s} \mapsto  y_s t^{q_s-p_s},\quad  b_{q_sp_s} \mapsto y_{2 \ell-s+1} t^{p_s-q_s}, 
 \quad y_{2\ell-s+1}\mapsto y_s \
 \text{ for } \ s=1,\dots,\ell.
    \ee
\end{thm}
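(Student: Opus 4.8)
The plan is to reduce the statement to a comparison of two monomial maps. First I would record that, by its definition \eqref{defI}, the ideal $\mathcal{I}=J\cap\C[a,b]$ is exactly the kernel of the monomial (toric) homomorphism
\[
\psi:\C[a,b]\longrightarrow \C[y_1,\dots,y_\ell,t,t^{-1}],\qquad a_{p_sq_s}\mapsto y_st^{q_s-p_s},\quad b_{q_sp_s}\mapsto y_s .
\]
Indeed, in the Laurent ring $R=\C[a,b,y_1,\dots,y_\ell,t,t^{-1}]$ the generators of $J$ simply solve for each $a_{p_sq_s}$ and $b_{q_sp_s}$ in terms of the $y$'s and $t$, so $R/J\cong\C[y_1,\dots,y_\ell,t^{\pm1}]$ and the composite $\C[a,b]\hookrightarrow R\twoheadrightarrow R/J$ is precisely $\psi$; its kernel is $\C[a,b]\cap J=\mathcal{I}$. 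Reading the map $\phi$ of the statement with the prescribed identification $y_{2\ell-s+1}\mapsto y_s$, it becomes the monomial homomorphism
\[
\phi:\C[a,b]\longrightarrow \C[y_1,\dots,y_\ell,t,t^{-1}],\qquad a_{p_sq_s}\mapsto y_st^{q_s-p_s},\quad b_{q_sp_s}\mapsto y_st^{p_s-q_s}.
\]
Thus the theorem is equivalent to the assertion $\ker\psi=\ker\phi$.

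Next I would prove this equality by exhibiting an injective endomorphism of the target that intertwines the two maps. Define the $\C$-algebra endomorphism
\[
\Theta:\C[y_1,\dots,y_\ell,t^{\pm1}]\to\C[y_1,\dots,y_\ell,t^{\pm1}],\qquad y_s\mapsto y_st^{\,p_s-q_s},\quad t\mapsto t^2 .
\]
A direct check on generators gives $\Theta\circ\psi=\phi$: on the $b$-variables $\Theta(\psi(b_{q_sp_s}))=\Theta(y_s)=y_st^{p_s-q_s}=\phi(b_{q_sp_s})$, and on the $a$-variables $\Theta(\psi(a_{p_sq_s}))=\Theta(y_s)\,(t^2)^{q_s-p_s}=y_st^{(p_s-q_s)+2(q_s-p_s)}=y_st^{q_s-p_s}=\phi(a_{p_sq_s})$. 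The map $\Theta$ acts on the exponent lattice $\Z^\ell\oplus\Z$ of monomials $\prod_s y_s^{c_s}t^{d}$ by fixing the $y$-exponents and sending the $t$-exponent to $\sum_s(p_s-q_s)c_s+2d$; this linear map has determinant $2\neq0$, so $\Theta$ sends distinct monomials to distinct monomials and is therefore injective. Consequently $\ker\phi=\ker(\Theta\circ\psi)=\ker\psi=\mathcal{I}$, which is the claim.

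As an alternative to the intertwining map, the equality $\ker\psi=\ker\phi$ can be verified directly on the fibres. Both kernels are toric, hence spanned by the binomials $[\mu]-[\nu]$ whose two monomials have equal image. Writing $\mu^a_s,\mu^b_s$ for the exponents of $a_{p_sq_s},b_{q_sp_s}$ in $[\mu]$ and setting $\delta_s=\mu^a_s-\nu^a_s$, equality of the $y$-exponents forces $\mu^b_s-\nu^b_s=-\delta_s$; the $t$-exponent condition then reads $\sum_s(q_s-p_s)\delta_s=0$ for $\psi$ and $\sum_s(q_s-p_s)\,2\delta_s=0$ for $\phi$, and these two conditions coincide. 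Either route establishes the theorem.

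I expect the only genuine obstacle to be bookkeeping rather than anything conceptual: one must interpret $\phi$ correctly through the identification $y_{2\ell-s+1}\mapsto y_s$, and one must keep everything inside the Laurent ring $\C[y_1,\dots,y_\ell,t^{\pm1}]$, since the exponents $q_s-p_s$ may be negative. Working over the Laurent ring is what makes both the identification $\mathcal{I}=\ker\psi$ and the injectivity of $\Theta$ (via its nonzero determinant) clean; insisting on a genuine polynomial ring would force the introduction of an inverse variable $\tilde t$ with $t\tilde t=1$ and an argument by saturation, complicating the exposition without changing the outcome.
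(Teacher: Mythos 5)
Your proof is correct, and it takes a somewhat different route from the paper's. The paper also identifies $\mathcal{I}$ and $\ker\phi$ as kernels of monomial maps (via elimination ideals $J\cap\C[a,b]$ and $U\cap\C[a,b]$), but it then argues geometrically: the substitution $\alpha=t^2$, $y_s=t^{q_s-p_s}b_{q_sp_s}$, $y_{2\ell-s+1}=t^{p_s-q_s}a_{p_sq_s}$ shows that the parametrizing systems \eqref{abr} and \eqref{aby} are equivalent, hence the two varieties coincide, and since both ideals are toric and therefore prime, the ideals themselves coincide. You package the very same substitution ($t\mapsto t^2$ together with the rescaling of the $y_s$) as an explicit injective $\C$-algebra endomorphism $\Theta$ of the target Laurent ring satisfying $\Theta\circ\psi=\phi$, which gives $\ker\phi=\ker\psi=\mathcal{I}$ directly. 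What your version buys is a purely algebraic argument that never invokes primality of toric ideals or the passage from equality of varieties to equality of ideals; it also makes explicit the point, left implicit in the paper, that one must work in the Laurent ring (or saturate) because the exponents $q_s-p_s$ may be negative. What the paper's version buys is brevity given the toric machinery it has already set up, and a formulation that parallels the reversibility computation in Subsection \ref{sub32}. Your alternative fibre-wise check of $\ker\psi=\ker\phi$ on binomials is also correct and is essentially the lattice-level shadow of the determinant-$2$ computation for $\Theta$.
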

\begin{proof}
Let 
\be \label{idU}
U= \left\la  a_{p_sq_s} -  y_s t^{q_s-p_s},\ b_{q_sp_s} -  y_{2 \ell-s+1} t^{p_s-q_s}, \
  y_{2\ell-s+1}- y_s\ : 
 \   s=1,\dots,\ell \right\ra.
\ee
By Proposition 3.5 of \cite{HHO},
$$\ker \phi= U\cap  \C[a,b]. $$
Setting in \eqref{abr} $\alpha =t^2$ and $ y_s= t^{q_s-p_s} b_{q_{s} p_{s}}, y_{2\ell -s+1} =t^{p_s-q_s} a_{p_sq_s} \text{ for } s=1,\dots, \ell$, we see that equations \eqref{abr} are equivalent to equations 
\be \label{aby}
 a_{p_sq_s} = y_s t^{q_s-p_s},    \quad  b_{q_sp_s} = y_{2 \ell-s+1} t^{p_s-q_s},   \quad
 y_s=y_{2\ell-s+1}\
 \text{ for } \ s=1,\dots,\ell.
\ee 

Since systems \eqref{abr} and 
\eqref{aby} are equivalent,  their varieties 
coincide. Furthermore, being toric ideals, both $\ker\, \phi$ and $\mc I$ are prime, and 
$$ 
U\cap \C[a,b]=J \cap \C[a,b]=\mathcal{I}.$$
\end{proof}
}
\begin{remark}\label{rem1}
We note that 
the kernel of the map \eqref{IrevT}
is the toric ideal of the Lawrence lifting 
$$
\Lambda(\mathfrak{M})= \begin{pmatrix} \mathfrak{M} & 0\\ E_{2\ell} & E_{2\ell} \end{pmatrix},
$$
where  $E_k$ is the notation for  the $k\times k $ identity matrix. 
The kernel of map \eqref{Irev} is the toric ideal of the matrix 
$$
\begin{pmatrix} \mathfrak{M} \\ E_{\ell} | \widehat E_{\ell} 
\end{pmatrix},
$$
where $ \widehat E_{\ell}$
is the ${\ell}\times{\ell}$
matrix having 1 on the secondary diagonal and the other elements equal to 0. 
\end{remark}

\begin{remark}
Setting $t=t_1^{-1} t_2$ in the ideal $U$ of \eqref{idU},  we obtain the ideal $\mathcal{J}$
of the Sibirsky subvariety algorithm of \cite{JLR}.
It was shown in \cite{JLR} that $\mathcal{J}\cap \C[a,b]=I_S.$ Thus, using, Theorem \ref{th_2dim_rev}, we have  
\be \label{i_i_s}
\mathcal{I}=I_S.
\ee 
Equality \eqref{i_i_s}
was proved earlier  in \cite{R}. Note that neither Proposition \ref{pro35} nor formula \eqref{i_i_s} hold in the case of system \eqref{nSYS} with  $n>2.$
\end{remark}

\section{The n-dimensional case} \label{sec_ndim}

Let $n $ be a prime number, $n>2$, and $\zeta$ be a primitive $n$-th root of unity. {Depending on the context, $\bar{\zeta}$ means either  the vector $(1,\zeta,\ldots,\zeta^{n-1})$ or $(1,\zeta,\ldots,\zeta^{n-1})^\top$. Let} 
$$
S\n =\{ {\bf{p}}^{(k)}=(p_1^{(k)},p_2^{(k)}\ldots,p_n^{(k)})^\top \in {\mathbb N}_{-1}\times {\mathbb N}_0^{n-1}\ :\ \sum_{j=1}^{n} p_j^{(k)}\geq 1, k=1,2,\ldots, \ell \}.
$$   
For an ordered $n$-tuple of indeterminants ${\bf x}=(x_1,x_2,\ldots,x_n)$ and ${\bf p}=(p_1,p_2,\ldots,p_n)^\top\in S\n $, let ${\bf x}^{{\bf{p}}}$ denote the monomial $x_1^{p_1}x_2^{p_2}\ldots x_n^{p_n}$.
We  consider a complex polynomial $n$-dimensional system of the form 
\begin{eqnarray}
\begin{aligned}\label{nSYS}
\dot x_1 &= x_1\left(1+\sum_{k=1}^{\ell}a^{(1)}_{ {\bf{p}}^{(k)}}\,{\bf{x}}^{{\bf{p}}^{(k)}} \right)\\
\dot x_2 &= x_2\left(\zeta+\sum_{k=1}^{\ell}a^{(2)}_{ T_n({\bf{p}}^{(k)})}\,{\bf{x}}^{T_n({\bf{p}}^{(k)})} \right)\\
\dot x_3 &= x_3\left(\zeta^2+\sum_{k=1}^{\ell}a^{(3)}_{ T_n^2({\bf{p}}^{(k)})}\,{\bf{x}}^{T_n^2({\bf{p}}^{(k)})} \right)\\
& \vdots \\
\dot x_n &= x_n\left(\zeta^{n-1}+\sum_{k=1}^{\ell}a^{(n)}_{ T_n^{n-1}({\bf{p}}^{(k)})}\,{\bf{x}}^{T_n^{n-1}({\bf{p}}^{(k)})} \right),
\end{aligned}\end{eqnarray}
 where the map $T_n$ denotes the cyclic permutation of components of $n$-tuple ${\bf{p}}\in S^{\n} $ determined by matrix \eqref{Tn}. In particular, 
\be \label{Tnmm} T_n({\bf{p}})=T_n(p_1,p_2,\ldots,p_n)^\top=(p_n, p_1, \ldots,p_{n-1})^\top
\ee
 and $T_n^m=T_n^{m-1}\circ T_n$, for $m=2,\ldots, n-1$. Note that $T_n^n$ is the identity map, and we  define $T_n^0=id$ by agreement. 

{
For instance, the 
  cubic system 
\be \label{sys_3dim_ex}
\begin{aligned}
\dot x= & x + a_{100} x^2 + a_{001} x z + a_{101} x^2 z,\\
\dot y= & \zeta y + b_{010} y^2 + b_{100} x y + b_{110} x y^2, \\
\dot z=& \zeta^2 z +c_{001} z^2 + c_{010} y z + c_{011} y z^2 
\end{aligned}
\ee
can be considered as a system  of the form \eqref{nSYS}. In this case 
$S=\{(1,0,0), \, (0,1,0),\,  (0,0,1)\}. $
The subscripts of the parameters of the first equation are defined by $S.$
The subscripts of the parameters of the second and the third equations of \eqref{sys_3dim_ex}
are obtained by applying $T_3$ and $T^2_3$, respectively, to the elements of $S$. To write \eqref{sys_3dim_ex} precisely in the form \eqref{nSYS}
one sets $   a_{100} =  a_{100}^{(1)}, 
$ $b_{010}=  a_{010}^{(2)},$ $ c_{001}
=a_{001}^{(3)}$  and so on.
}

 Clearly, system \eqref{nSYS} follows the form \eqref{SYS} and can be regarded as a generic family of $n$-dimensional polynomial systems with the linear part \eqref{Zlin}. This is because any polynomial system with the linearization defined by \eqref{Zlin} can be included into  a family \eqref{nSYS}.
{ Observe also that system \eqref{ggs} can be considered as a particular case of system \eqref{nSYS}, if we allow  $n=2$ in \eqref{nSYS}. }

We can write system \eqref{nSYS} in a short form 
as 
$$
\dot {\bf x}= {\mc Z} {\bf x} +\xb \odot R(\xb),
$$
where 
$$
R(\xb)= \left( \sum_{k=1}^{\ell}a^{(1)}_{ {\bf{p}}^{(k)}}{\bf{x}}^{{\bf{p}}^{(k)}}, \sum_{k=1}^{\ell}a^{(2)}_{ T_n({\bf{p}}^{(k)})}{\bf{x}}^{T_n({\bf{p}}^{(k)})}\, , \ldots \ , \sum_{k=1}^{\ell}a^{(n)}_{ T_n^{n-1}({\bf{p}}^{(k)})}{\bf{x}}^{T_n^{n-1}({\bf{p}}^{(k)})} \right)^\top
$$
and $\odot$ stands for the Hadamard product. 



\subsection{Invariants of system \eqref{nSYS}}

In this subsection  we study polynomial invariants of the action of transformation 
\eqref{ch3d} on system \eqref{nSYS} generalizing results of \cite{LiuLi,RS,Sib1,Sib2} to the $n$-dimensional case. 

Consider the transformation of the phase space
defined by \eqref{ch3d}, that is, $\yb=e^{\mathcal{Z} \psi} \xb $ 
for   $\psi\in {\mathbb C}$.
Notice that, for every ${\bf p}\in S\n$,
\begin{equation}
(e^{{\psi}{\mc Z}}{\bf x})^{\bf p}=
e^{\psi \,  {\bf p} \cdot\bar \zeta } {\bf x}^{\bf p},
\end{equation}
where $ {\bf p}\cdot \bar{\zeta}$ denotes the inner product of vectors   ${\bf p}$ and $\bar{\zeta}$. 
Analogous formula holds if $\bf p$ is substituted with any of $T_n^m({\bf p})$, $m=1,2\ldots,n-1$. Using these relations, it can be seen that transformation  \eqref{ch3d} brings system \eqref{nSYS} into the form
 \be \label{nSYS_transf}
\dot \xb= \mathcal{Z} \xb+ \xb \odot R_{\psi}(\xb),
\ee
where $R_{\psi}(\xb)$ denotes the vector
\begin{multline}\label{n_dimChange}
 \left( \sum_{k=1}^{\ell}a^{(1)}_{{\bf{p}}^{(k)}} {e}^{-\psi\,{\bf{p}}^{(k)}\cdot {\bar \zeta}} {\bf x}^{{\bf{p}}^{(k)}}, \sum_{k=1}^{\ell}a^{(2)}_{ T_n({\bf{p}}^{(k)})} {e}^{-\psi\,T_n({\bf{p}}^{(k)})\cdot {\bar \zeta}} {\bf x}^{T_n({\bf{p}}^{(k)})},\,\ldots \right.\\
\left.
\ldots,\sum_{k=1}^{\ell}a^{(n)}_{ T_n^{n-1}({\bf{p}}^{(k)})} {e}^{-\psi\,T_n^{n-1}({\bf{p}}^{(k)})\cdot {\bar \zeta}} {\bf x}^{T_n^{n-1}({\bf{p}}^{(k)})} \right)^\top
\end{multline}
representing the change of parameters of  system \eqref{nSYS}.

Define the ordered $n \ell$-tuple  of parameters of system \eqref{nSYS}
as
\begin{multline} \label{Apar}\left( a^{(1)}_{{\bf{p}}^{(1)}},a^{(2)}_{T_n({\bf{p}}^{(1)})},\ldots, a^{(n)}_{T_n^{n-1}({\bf{p}}^{(1)})}, a^{(1)}_{{\bf{p}}^{(2)}},a^{(2)}_{T_n({\bf{p}}^{(2)})},\ldots, a^{(n)}_{T_n^{n-1}({\bf{p}}^{(2)})},\ldots \right.\\
\left.
\ldots, a^{(1)}_{{\bf{p}}^{(\ell)}},a^{(2)}_{T_n({\bf{p}}^{(\ell)})},\ldots, a^{(n)}_{T_n^{n-1}({\bf{p}}^{(\ell)})}\right)\end{multline}
and consider the algebra of complex polynomials $\K[a]$, where the variables 
are the parameters $a^{(1)}_{{\bf{p}}^{(1)}},\ldots,a^{(n)}_{T_n^{n-1}({\bf{p}}^{(\ell)})}$ of system \eqref{nSYS}.
For the ordered $n \ell$-tuple \eqref{Apar}  
and $\nu=\left( \nu_1, \nu_2,\ldots,\nu_{n\ell}\right)\in {\mathbb N}_0^{n\ell}$, the  monomial $[\nu]$ in ${\mathbb C}[a]$ is  defined by 
\begin{eqnarray*}
\begin{aligned}
\relax [\nu]=& \prod_{m=0}^{n-1} {a^{(m+1)}_{ T_n^m({\bf{p}}^{(1)})}}^{\nu_{m+1}}\prod_{m=0}^{n-1} {a^{(m+1)}_{ T_n^m({\bf{p}}^{(2)})}}^{\nu_{n+m+1}} \cdots \prod_{m=0}^{n-1} {a^{(m+1)}_{ T_n^m({\bf{p}}^{(\ell)})}}^{\nu_{(\ell-1)n+m+1}}\\
    =&\prod_{k=1}^{\ell}\left( \prod_{m=0}^{n-1} {a^{(m+1)}_{ T_n^m({\bf{p}}^{(k)})}}^{\nu_{(k-1)n+m+1}}\right).
\end{aligned}
\end{eqnarray*}


The change of parameters \eqref{n_dimChange} of system \eqref{nSYS} induces (in a natural way) a $\mathbb C$-algebra endomorphism $\mathcal T$ on the algebra $\C[a]$, defined on a monomial $[\nu]$ as
\begin{equation} \label{invar}
{\mathcal T}[\nu]=[\nu]e^{-\psi\, {\bf \Sigma}(\nu)},
\end{equation}
where
\begin{eqnarray*} \label{sigma}
    \begin{aligned} 
        {\bf \Sigma}(\nu)=\sum_{k=1}^{\ell}\left(\sum_{m=0}^{n-1} \left(T_n^m({\bf{p}}^{(k)})\cdot \bar{\zeta}\right) \nu_{(k-1)n+m+1}\right).
    \end{aligned}
\end{eqnarray*}

We can rewrite the above  expression as 
\begin{eqnarray} \label{sigmaL} 
    \begin{aligned}
        {\bf \Sigma}(\nu)=L^1(\nu) +L^2(\nu)\zeta + L^3(\nu)\zeta^2+ \cdots + L^n(\nu)\zeta^{n-1},
    \end{aligned}
\end{eqnarray}
where $L^j(\nu)\in {\mathbb Z}$ for every $j=1,2,\ldots,n$. More explicitly,
\begin{equation} \label{L_ndim} 
    \begin{aligned} 
        L^1(\nu)&=\sum_{k=1}^{\ell}\left( p_1^{(k)}\nu_{(k-1)n+1}+p_n^{(k)}\nu_{(k-1)n+2}+p_{n-1}^{(k)}\nu_{(k-1)n+3}+\cdots +p_2^{(k)}\nu_{kn} \right)\\
        L^2(\nu)&= \sum_{k=1}^{\ell}\left( p_2^{(k)}\nu_{(k-1)n+1}+p_1^{(k)}\nu_{(k-1)n+2}+p_n^{(k)}\nu_{(k-1)n+3}+\cdots +p_3^{(k)}\nu_{kn} \right)\\
        &\vdots\\
        L^n(\nu)&=\sum_{k=1}^{\ell}\left( p_n^{(k)}\nu_{(k-1)n+1}+p_{n-1}^{(k)}\nu_{(k-1)n+2}+p_{n-2}^{(k)}\nu_{(k-1)n+3}+\cdots +p_1^{(k)}\nu_{kn} \right).
    \end{aligned}
\end{equation}
Map  \eqref{L_ndim} can be written in a more compact form as follows. 
Define the $n\times n \ell$ matrix $\mathfrak{L}$ as
$$
\left[ \pb^{(1)} \,\,  T_n(\pb^{(1)}) \,\,  \dots \,\, 
T_n^{n-1}(\pb^{(1)}) \,\,  \pb^{(2)} \,\,   T_n(\pb^{(2)}) \,\,  \dots \,\, 
T_n^{n-1}(\pb^{(2)}) \,\,  \dots \,\,  \pb^{(\ell)} \,\,    T_n(\pb^{(\ell)})  \,\, \dots  \,\, 
T_n^{n-1}(\pb^{(\ell)})\right].
$$ 
Then 
\begin{equation} \label{defL}
L(\nu) := \left(L^1(\nu),L^2(\nu),\ldots,L^n(\nu)\right)^\top
=\mathfrak{L} \nu^\top.
\end{equation}


Considering the vectors 
$$
\widetilde{\bf{p}}^{(k)}=(p_n^{(k)},p_{n-1}^{(k)},\ldots ,p_1^{(k)})^\top \, \mbox{ and } \, 
 \nu^{\{k\}}=(\nu_{(k-1)n+1},\nu_{(k-1)n+2},\ldots ,\nu_{kn} )^\top,
$$
where  $1\leq k \leq \ell$, and using the inner product notation, 
$L^j(\nu)$ can be written as
\begin{eqnarray} \label{L^j}
    \begin{aligned}
        L^j(\nu)&=\sum_{k=1}^{\ell} T_n^j(\widetilde{\bf{p}}^{(k)})\cdot \nu^{\{k\}} \ \ \text{ for every }\ j=1,2,\ldots, n.
    \end{aligned}
\end{eqnarray}

\begin{remark}
 We note that there is a discrepancy in the definition 
 of the map $L(\nu)$ in this section and in the previous sections.  
 To be consistent with the definition given by \eqref{defL},
map \eqref{Lnu} has to be written as 
\begin{multline*} 
L(\nu) = \binom{L^1(\nu)}{L^2(\nu)}
       = \binom{p_1}{q_1} \nu_1        
       + \binom{q_1}{p_1} \nu_{2}
       + \binom{p_2}{q_2} \nu_3        
       + \binom{q_2}{p_2} \nu_{4}
       +
       \cdots \\
       +\binom{p_{2 \ell-1}}{q_{2 \ell-1}} \nu_{2 \ell-4} 
       +\binom{q_{2\ell-1}}{p_{2\ell-1}}  \nu_{2\ell-3}
    +\binom{p_{2\ell}}{q_{2\ell}} \nu_{2\ell-1}       
    +\binom{q_{2\ell}}{p_{2\ell}} \nu_{2\ell}. 
\end{multline*}
 However we have used the definition given by \eqref{Lnu}
 to be consistent with the notations in the works \cite{JLR,R,RS}
\end{remark}

With  the introduced   notation, equation \eqref{sigma} can be written as 
\be \label{Lmap}
{\bf \Sigma}(\nu)=
        \sum_{k=1}^{\ell} \sum_{j=1}^{n} \left(T_n^j(\widetilde{\bf{p}}^{(k)})\cdot \nu^{\{k\}}\right)\,\zeta^{j-1}.
\ee 


Notice that, using  the notation of map $L$ given by \eqref{defL} and \eqref{L^j}, equation \eqref{Lmap} can be written as inner product, that is, 
\be \label{Lmap1}
{\bf \Sigma}(\nu)= L(\nu)\cdot \bar{\zeta}. 
\ee 

Using  map \eqref{defL} we  define the monoid
\begin{equation} 
\mathcal M\n=\bigcup_{K\in {\mathbb N}_0} \left \{ \nu\in {\mathbb N}_0^{n\ell}\ :\ L(\nu)=(K,K,\ldots,K)^\top \right \}.
\end{equation}
It is easy to see that $\mathcal M\n$ is a submonoid of ${\mathbb N}_0^{n\ell}$. 
\begin{thm} \label{THMinvarM}
A monomial $[\nu] $ is an invariant of  group \eqref{ch3d} if and only if 
$\nu \in \mcm\n$. 
\end{thm}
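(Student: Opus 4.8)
I would prove the theorem by unwinding the definition of invariance of a monomial $[\nu]$ under the group action \eqref{ch3d} and showing it is equivalent to a condition on $\mathbf{\Sigma}(\nu)$, which in turn is equivalent to $\nu \in \mcm\n$. The key computational fact already available is equation \eqref{invar}, which states that the induced algebra endomorphism $\mathcal{T}$ acts on a monomial by
$$
\mathcal{T}[\nu]=[\nu]\, e^{-\psi\, \mathbf{\Sigma}(\nu)}.
$$
Thus $[\nu]$ is invariant under the group action precisely when $\mathcal{T}[\nu]=[\nu]$ for all $\psi \in \C$, which holds if and only if $e^{-\psi\,\mathbf{\Sigma}(\nu)}=1$ for all $\psi$, and since $\psi$ ranges over a continuum this forces $\mathbf{\Sigma}(\nu)=0$. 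So the entire problem reduces to showing that $\mathbf{\Sigma}(\nu)=0$ if and only if $\nu \in \mcm\n$.

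**The arithmetic core.** Using \eqref{Lmap1}, the quantity $\mathbf{\Sigma}(\nu)$ equals $L(\nu)\cdot\bar\zeta = L^1(\nu)+L^2(\nu)\zeta+\cdots+L^n(\nu)\zeta^{n-1}$, where each $L^j(\nu)\in\Z$. By definition, $\nu\in\mcm\n$ means $L(\nu)=(K,\dots,K)^\top$ for some $K\in\N_0$; in that case $\mathbf{\Sigma}(\nu)=K(1+\zeta+\cdots+\zeta^{n-1})=0$, since $\zeta$ is a primitive $n$-th root of unity and the full sum of $n$-th roots of unity vanishes. This gives the easy direction immediately. The reverse implication is where the hypothesis that $n$ is prime becomes essential: I would invoke the fact that the $n$-th cyclotomic polynomial $1+x+\cdots+x^{n-1}$ is irreducible over $\Q$ for $n$ prime, so it is exactly the minimal polynomial of $\zeta$. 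Consequently, the integers $1,\zeta,\dots,\zeta^{n-1}$ are linearly dependent over $\Q$ by the single relation $\sum_{j=0}^{n-1}\zeta^j=0$ and its scalar multiples. Therefore $\sum_{j=1}^{n}L^j(\nu)\zeta^{j-1}=0$ with $L^j(\nu)\in\Z$ forces $L^1(\nu)=L^2(\nu)=\cdots=L^n(\nu)$, i.e. the common value is some integer $K$. Since the entries of $\nu$ and of the $\pb^{(k)}$ are constrained so that each $L^j(\nu)$ is a sum of products of nonnegative integers (the exponents and the components of $\pb^{(k)}$, which lie in $\N_{-1}\times\N_0^{n-1}$), I would check that this common value $K$ is in fact nonnegative, matching the definition of $\mcm\n$.

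**Anticipated obstacle.** I expect the main subtlety to be the reverse direction: correctly establishing that a vanishing $\Z$-linear combination of $1,\zeta,\dots,\zeta^{n-1}$ forces all coefficients equal. This is precisely the step that uses primality of $n$, and it is worth stating carefully that over $\Q$ the dimension of the field $\Q(\zeta)$ is $n-1$, so the $n$ powers $1,\zeta,\dots,\zeta^{n-1}$ satisfy a unique (up to scaling) linear relation, namely $\sum\zeta^{j}=0$; any integer relation is a rational multiple of this, which pins down $L^1=\cdots=L^n$. A secondary point to handle cleanly is the passage from \emph{monomial} invariance to the condition $e^{-\psi\mathbf{\Sigma}(\nu)}=1$ for \emph{all} $\psi$: I would emphasize that because $\psi$ is a free complex (or real) parameter, the exponential can equal $1$ identically only when its argument vanishes, so no subtlety about $2\pi i$-periodicity of isolated $\psi$ values can intervene.
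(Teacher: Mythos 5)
Your overall strategy coincides with the paper's: reduce invariance of $[\nu]$ to ${\bf \Sigma}(\nu)=0$ via \eqref{invar}, obtain the easy direction from $1+\zeta+\cdots+\zeta^{n-1}=0$, and for the converse deduce $L^1(\nu)=\cdots=L^n(\nu)$ from the vanishing of an integer combination of powers of $\zeta$. For that middle deduction you invoke the irreducibility of $1+x+\cdots+x^{n-1}$ over $\Q$ for $n$ prime, so that the only $\Q$-linear relation among $1,\zeta,\dots,\zeta^{n-1}$ is a scalar multiple of the full sum; this is cleaner and more direct than the paper's route, which splits into the cases $L^1(\nu)=0$ and $L^1(\nu)\neq 0$ and, in the latter, manipulates the normalized relation to obtain $(L_n)^n=1$ and then $L_n=1$ from rationality. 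Both arguments ultimately rest on the same fact about the minimal polynomial of $\zeta$, and yours packages it more transparently. Your remark that invariance for all $\psi$ forces ${\bf \Sigma}(\nu)=0$ (rather than merely $2\pi i$-periodicity at isolated values) is a point the paper passes over silently, and it is worth making.

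However, your justification of the final step, $K\geq 0$, does not work as written. You argue that each $L^j(\nu)$ is a sum of products of nonnegative integers, but the exponent vectors $\pb^{(k)}$ lie in $\N_{-1}\times\N_0^{n-1}$, so the first component $p_1^{(k)}$ may equal $-1$ and individual terms $p_i^{(k)}\nu_{(k-1)n+m}$ in \eqref{L_ndim} can be negative; an individual $L^j(\nu)$ is not manifestly nonnegative. The correct argument, which is the one the paper uses, is to sum the $n$ equal values: $nK=\sum_{j=1}^n L^j(\nu)=\sum_{k=1}^\ell\bigl(\sum_{j=1}^n p_j^{(k)}\bigr)\bigl(\sum_{j=1}^n\nu_{(k-1)n+j}\bigr)$, which is nonnegative because $\sum_{j}p_j^{(k)}\geq 1$ by the definition of $S\n$ and the entries of $\nu$ are nonnegative. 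This is a small and fixable gap, but as stated your reasoning for the nonnegativity of $K$ rests on a false premise.
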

\begin{proof}
First we  note that a monomial $[\nu]$ is an invariant of group \eqref{ch3d} if and only if ${\mathcal T}[\nu]=[\nu]$. 

If $\nu\in \mcm\n$, then by \eqref{sigmaL},
${\bf \Sigma} (\nu)=K(1+\zeta+\zeta^2+\cdots+\zeta^{n-1})$, for some $K\in {\mathbb N}_0$. 
Since $1+\zeta+\cdots+\zeta^{n-1}=0$, 
we further see that ${\bf \Sigma} (\nu)=0$ and, therefore, ${\mathcal T}[\nu]=[\nu]$.

For the other direction, let $\nu$ be such that ${\mathcal T}[\nu]=[\nu]$. By \eqref{invar}, it follows that ${\bf \Sigma} (\nu)=0$.  
Using \eqref{sigmaL}, we have
\begin{eqnarray} \label{Sigma=0}
    \begin{aligned}
        L^1(\nu) +L^2(\nu)\zeta + L^3(\nu)\zeta^2+ \cdots + L^n(\nu)\zeta^{n-1}=0.
    \end{aligned}
\end{eqnarray}

Let us consider two cases. If $L^1(\nu)=0$, then $L^2(\nu)=L^3(\nu)=\ldots=L^n(\nu)=0$, since $\zeta$ is a primitive $n$-th root of unity with $n$ being a prime number 
and therefore the set $\{ \zeta, \zeta^2, \dots, \zeta^{n-1}   \}$
is linearly independent over $\Q.$
Therefore $\nu\in {\mathcal M}\n$. 
Similarly, if  $L^n(\nu)=0$, then  $L^i(\nu)=0 $ for $i=1, \dots, n-1 $ and  $\nu\in {\mathcal M}\n$.

Assume now that $L^1(\nu)\neq 0$. 
Let us denote $L_j=\frac{L^j(\nu)}{L^1(\nu)}$ for every $j=2,\ldots, n$. 
By \eqref{Sigma=0}, we have
 \be \label{eL1}
 1+L_2\zeta +L_3\zeta^2 + \cdots + L_n\zeta^{n-1}=0.
 \ee 
 Notice that $L_n\neq 0$. Multiplying 
 the latter equality by $\zeta$ and dividing by $L_n$,
we obtain
 \be \label{eL2}
 1+\frac{1}{L_n}\zeta +\frac{L_2}{L_n}\zeta^2 + \cdots + \frac{L_{n-1}}{L_n}\zeta^{n-1}=0.
 \ee
Subtracting \eqref{eL2} from \eqref{eL1} and using the above mentioned fact about $\zeta$, we see that 
$$L_2=\frac{1}{L_n}, L_3=\frac{L_2}{L_n}, \ldots, L_n=\frac{L_{n-1}}{L_n},$$
which imply $(L_n)^n=1$. Since $L_n$ is a rational number and $n>2$ is prime, it follows that $L_n=1$ and, therefore, $L_j=1$ for every $j=2,\ldots,n-1$. Thus, $L^1(\nu)=L^2(\nu)=\cdots =L^n(\nu)=K$. It remains to prove that $K\in \mathbb N$. But this holds true since, for  $\nu\in {\mathbb N}_0^{n\ell}$, $$nK=L^1(\nu)+L^2(\nu)+\cdots +L^n(\nu)=\sum_{k=1}^{\ell}\left(\left(\sum_{j=1}^{n} p_j^{(k)}\right)\left(\sum_{j=1}^n \nu_{(k-1)n+j}\right)\right)$$ and $\displaystyle \sum_{j=1}^{n} p_j^{(k)}\geq 1$ for all $k=1,2,\ldots, \ell$. 
\end{proof}

Consider the cyclic permutation matrices $\widetilde T_n$ and $T_n$ 
as defined by \eqref{Tntilde} and  \eqref{Tn}, respectively,   and let $\widetilde{\bf T}, {\bf T}$ be the $n\ell \times n\ell$ block diagonal matrices 
$$
\widetilde{\bf T}= {\rm{diag}}[ \widetilde T_n, \widetilde T_n,\ldots,\widetilde T_n] \ \ \text{ and } \ \ {\bf T}={\rm{diag}}[T_n,T_n,\ldots, T_n].
$$
Notice that $\widetilde{\bf T}$ is the inverse matrix of ${\bf T}$ and that ${\bf T}=\widetilde{\bf T}^{\top}$, thus they are unitary matrices. 
For $\nu \in {\mathbb N}_0^{n\ell}$ and integer $k$, define
\be \label{nuk}
\tilde   {\nu}^{(k)}:=\nu {\bf T}^k .
\ee


\begin{defin}
Let $\nu \in {\mathbb N}_0^{n\ell}$.\begin{enumerate}
\item We say that a
monomial $[\tilde{\nu}^{(k)}]$ is conjugate to the monomial $[\nu]$ if 
$\tilde{\nu}^{(k)} $ satisfies 
\eqref{nuk} for some $k=1,\dots, n-1.$ If $\nu = \nu {\bf T}$, we say that the monomial $[\nu]$ is self-conjugate. 
\item By $\tilde \nu$ we denote any element of the set  $\{  \tilde \nu^{(1)}, \dots , \tilde \nu^{(n-1)} \}$.
\end{enumerate}\end{defin}

Note that for the additive maps $L^j$, $j=1,2,\ldots,n$, defined by \eqref{L_ndim}, the following equations hold 
\begin{equation}\label{L_cyc}
L^j(\tilde{\nu}^{(k)})=L^{](j-1)+ k[_n+1}(\nu) \ \text{ for every }\ k=1,2,\ldots,n-1,
\end{equation}
where for an integer $m$ by $]m[_n$ we denote the  remainder of  division $m$ by $n$.
Therefore 
\begin{equation*} 
    \begin{aligned}
       &\left(L^1(\nu),L^2(\nu),\ldots,L^n(\nu)\right)
       = \left(L^n(\tilde{\nu}^{(1)}),L^1(\tilde{\nu}^{(1)}),\ldots ,L^{n-1}(\tilde{\nu}^{(1)})\right)\\
       &=\left(L^{n-1}(\tilde{\nu}^{(2)}),L^n(\tilde{\nu}^{(2)}),\ldots ,L^{n-2}(\tilde{\nu}^{(2)})\right)=\ldots
       =\left(L^{2}(\tilde{\nu}^{(n-1)}),L^3(\tilde{\nu}^{(n-1)}),\ldots ,L^{1}(\tilde{\nu}^{(n-1)})\right).
    \end{aligned}
\end{equation*}
Using the 
 cyclic permutation map $T_n$ defined by \eqref{Tnmm}, this can be written as
\begin{equation*}
L(\nu)=T_n(L(\tilde{\nu}^{(1)}))=T_n^2(L(\tilde{\nu}^{(2)}))=\ldots=T_n^{n-1}(L(\tilde{\nu}^{(n-1)})).\  
\end{equation*}
Therefore the map  $L$ has  the property that, for any $\tilde{\nu}$, $L(\tilde{\nu})$ is 
a cyclic permutation of coordinates of $L(\nu)$.
So, if $\nu$ is such that $L^j(\nu)=K$ for all $j=1,2,\ldots, n$, then $L^j(\tilde{\nu})=K$ for all $j$. Thus,  for every $\nu\in \mcm\n$ we have $L(\tilde{\nu})=L(\nu)$ and the following proposition holds.

\begin{pro}
If $\nu \in {\mathcal M}\n$, then $\tilde{\nu} \in {\mathcal M}\n$.
\end{pro}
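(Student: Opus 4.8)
The plan is to reduce the whole statement to the cyclic-permutation behaviour of the map $L$ that has already been isolated in the display immediately preceding the proposition, namely
$L(\nu)=T_n^{k}\bigl(L(\tilde\nu^{(k)})\bigr)$, or equivalently $L(\tilde\nu^{(k)})=T_n^{\,n-k}\bigl(L(\nu)\bigr)$. The substance of the argument is contained in the additive identities \eqref{L_cyc}; what remains is only to read off what they say when $L(\nu)$ happens to be a constant vector.

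First I would dispose of the membership (nonnegativity) condition in the definition of $\mcm\n$. By \eqref{nuk} we have $\tilde\nu^{(k)}=\nu\,{\bf T}^{k}$, and since ${\bf T}$ is a block-diagonal matrix built from cyclic permutation matrices $T_n$, the power ${\bf T}^{k}$ is itself a permutation matrix. Hence $\tilde\nu^{(k)}$ is obtained from $\nu$ merely by reindexing its coordinates, so $\tilde\nu^{(k)}\in\np^{n\ell}$ whenever $\nu\in\np^{n\ell}$. This shows the integrality and nonnegativity requirements transfer automatically, and only the condition $L(\tilde\nu^{(k)})=(K,\dots,K)^\top$ needs checking.

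Next I would invoke the relation just established, that $L(\tilde\nu^{(k)})$ is a cyclic permutation of the coordinate vector $L(\nu)$. Assuming $\nu\in\mcm\n$, there is some $K\in\np$ with $L(\nu)=(K,K,\dots,K)^\top$. Since any cyclic permutation fixes a constant vector, we get $L(\tilde\nu^{(k)})=T_n^{\,n-k}\bigl((K,\dots,K)^\top\bigr)=(K,K,\dots,K)^\top$, so $\tilde\nu^{(k)}\in\mcm\n$ by the definition of the monoid. As $k\in\{1,\dots,n-1\}$ was arbitrary, the conclusion holds for every $\tilde\nu$, which is exactly the claim.

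I do not expect a genuine obstacle: the proposition is essentially a corollary of the cyclic-permutation identity \eqref{L_cyc}, and the author's own remark ``$L(\tilde\nu)=L(\nu)$'' already signals this. The only point that deserves care is the bookkeeping behind that identity — verifying that the block structure of $\mathfrak{L}$ from \eqref{defL}, whose columns are grouped into the cyclic shifts $\pb^{(k)},T_n(\pb^{(k)}),\dots,T_n^{n-1}(\pb^{(k)})$ for each $k$, is compatible with the right action of ${\bf T}^{k}$ on $\nu$, so that shifting the argument really induces a clean cyclic shift of the output of $L$ rather than a scrambled permutation. Once that indexing is confirmed, the constant-vector observation closes the proof at once.
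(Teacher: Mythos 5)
Your argument is correct and coincides with the paper's own reasoning: the paper proves the proposition by the cyclic-permutation identity \eqref{L_cyc}, concluding that $L(\tilde{\nu})$ is a cyclic permutation of $L(\nu)$ and hence equals $(K,\dots,K)^\top$ when $L(\nu)$ does. Your additional remark that ${\bf T}^k$ merely permutes coordinates (so nonnegativity is preserved) is a harmless explicit check that the paper leaves implicit.
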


By  analogy with the two-dimensional case we define the Sibirsky ideal of system \eqref{nSYS} as follows. 
\begin{defin}
The \emph{Sibirsky ideal} of system \eqref{nSYS} is the ideal 
$$
I^{(n)}_S=\la  [{\nu}]-[\hat  \nu]\ : \ \nu \in \mcm\n  \ra,
$$
where 
\be \label{hat_nu}
\hat \nu =\nu{\widetilde{\bf T}}.
\ee
\end{defin}

Since $\hat \nu {\bf T}=\nu$ and 
$$
[\nu]-[\tilde \nu^{(2)}]= [\nu]-[\tilde \nu^{(1)}]+ [\nu^{(1)}]-[\tilde \nu^{(2)}] \in I\n_S,
$$
we see that  the Sibirski ideal is the same as the ideal  
\be 
\la  [{\nu}]-[\tilde  \nu^{(k)}]\ : \ \nu \in \mcm\n ,  \ 
k=1,\dots, n-1 \ra.
\ee

To find  the Hilbert basis of $\mcm\n$ we use the following statement.
\begin{pro}\label{pro_M}
 Let $\nu\in {\mathbb N}_0^{n\ell}$. Then $\nu \in \mcm\n $ if and only if $\nu$ satisfies 
 the system
 \be \label{eqM2}
 \begin{aligned}
 &L^1(\nu)-L^2(\nu)=L^2(\nu)-L^3(\nu)=\ldots=L^{n-1}(\nu)-L^n(\nu)=0.\\
 \end{aligned}
 \ee 
\end{pro}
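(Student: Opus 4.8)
The plan is to prove the two implications of the stated equivalence separately, observing at the outset that \eqref{eqM2} is exactly the assertion $L^1(\nu)=L^2(\nu)=\cdots=L^n(\nu)$, whereas membership $\nu\in\mcm\n$ demands in addition that this common value be a \emph{nonnegative integer}. The forward implication is then immediate: if $\nu\in\mcm\n$, there is some $K\in\np$ with $L^1(\nu)=\cdots=L^n(\nu)=K$, so every consecutive difference $L^j(\nu)-L^{j+1}(\nu)$ vanishes and \eqref{eqM2} holds.

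For the converse I would assume \eqref{eqM2}. The chain of equalities forces $L^1(\nu)=\cdots=L^n(\nu)$; call this common value $K$. By the explicit formula \eqref{L_ndim}, each $L^j(\nu)$ is an integer linear combination of the coordinates $\nu_i$ with the integers $p_j^{(k)}$ as coefficients, so $K\in\Z$ automatically. The only remaining point is to verify $K\ge 0$, which is precisely what separates the bare equality of the $L^j$ from membership in $\mcm\n$.

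To settle nonnegativity I would reuse the summation identity already established at the end of the proof of Theorem \ref{THMinvarM}. Summing the $n$ equalities $L^j(\nu)=K$ and regrouping the terms of \eqref{L_ndim} yields
\[
nK=\sum_{j=1}^n L^j(\nu)=\sum_{k=1}^{\ell}\left(\left(\sum_{j=1}^{n} p_j^{(k)}\right)\left(\sum_{j=1}^n \nu_{(k-1)n+j}\right)\right).
\]
Since $\sum_{j=1}^{n} p_j^{(k)}\ge 1$ for every $k$ (this is built into the definition of $S\n$) and every $\nu_i\ge 0$, the right-hand side is a sum of nonnegative terms, so $nK\ge 0$ and hence $K\ge 0$. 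Thus $K\in\np$ and $\nu\in\mcm\n$, completing the argument. I do not anticipate any real obstacle here: the entire content reduces to the observation that the consecutive-difference condition produces a common value $K$, and the only non-formal step, the nonnegativity of $K$, is dispatched at once by the summation identity.
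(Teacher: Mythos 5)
Your proposal is correct and follows essentially the same route as the paper: the forward direction is immediate from the definition of $\mcm\n$, and the converse reduces to the identity $nK=\sum_{k=1}^{\ell}\bigl(\sum_{j=1}^{n} p_j^{(k)}\bigr)\bigl(\sum_{j=1}^n \nu_{(k-1)n+j}\bigr)$ already used in the proof of Theorem \ref{THMinvarM}, which gives $K\in\np$ since each $\sum_{j} p_j^{(k)}\geq 1$ and $\nu\in\N_0^{n\ell}$. No gaps.
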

\begin{proof}
By definition of $\mcm\n$, it is obvious that every $\nu \in \mcm\n$ satisfies \eqref{eqM2}.

To prove the other implication, let $\nu\in {\mathbb N}_0^{n\ell}$ satisfy \eqref{eqM2}. It follows that  $L^1(\nu)=L^2(\nu)=\ldots=L^n(\nu)=K$  holds. Further, as in the proof of Theorem \ref{THMinvarM}, the  equality 
$L^1(\nu)+L^2(\nu)+\ldots +L^n(\nu)=nK$ implies that $K$ is a nonnegative integer. 
\end{proof}

Like in the two-dimensional case 
the  Hilbert basis of the monoid $\mcm\n$ can be computed using Algorithm \ref{alg1}.

Let $\mathfrak{M}\n$ be 
the $(n-1)\times n\ell$ dimensional matrix of system \eqref{eqM2},
\be \label{Mn}
\mathfrak{M}\n=\begin{bmatrix} P^{(1)} \ P^{(2)}\  \ldots \ P^{(\ell)}\end{bmatrix}, 
\ee
where every $P^{(k)}$ represents an $(n-1)\times n$ matrix of the form
\be \label{Pmat}
\begin{aligned}
    P^{(k)}=\begin{bmatrix} (p_1^{(k)}- p_2^{(k)}) & (p_n^{(k)}- p_1^{(k)}) & \ldots & (p_2^{(k)}- p_3^{(k)})\\
   (p_2^{(k)}- p_3^{(k)}) & (p_1^{(k)}- p_2^{(k)}) & \ldots & (p_3^{(k)}- p_{4}^{(k)})\\
     \vdots &   \vdots & \ddots & \vdots \\
   (p_{n-1}^{(k)}- p_n^{(k)}) & (p_{n-2}^{(k)}-p_{n-1}^{(k)}) & \ldots & (p_n^{(k)}-p_1^{(k)}) \end{bmatrix}.
 \end{aligned}
 \ee
 
For ${\bf y}=(y_1, \dots, y_{n\ell})$,  we denote  by  ${\bf y}^{\nu}$  the monomial
 $y_1^{\nu_1}y_2^{\nu_2}\ldots y_{n\ell}^{\nu_{n\ell}}$.
\begin{thm} \label{th_B3}
Let $Y\n=\left\langle [\nu] - {\bf y}^{\nu}\ |\ \nu\in H\n \right\rangle$ { be the ideal 
constructed from the \gb $\mathcal{G}_{\mathfrak{M}^{\n}}$} returned by Algorithm  \ref{alg1}  for the matrix $\mathfrak{M}\n$ 
(in particular, $H\n $ is the  Hilbert basis  of $\mathcal M\n$). 
Then ${ G\n}=\{[\nu]-[\hat{\nu}]\ |\ \nu\in H\n, \nu\neq \hat{\nu}\}$ is a Gr\"obner basis of $I\n_S$.
\end{thm}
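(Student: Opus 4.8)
The plan is to mirror the proof of Theorem \ref{th_B}, replacing the involution of the two-dimensional case by the conjugation $\nu\mapsto\hat\nu=\nu\widetilde{\bf T}$, and to argue in two stages: first that $G\n$ generates $I\n_S$, and then that it is a \gb with respect to the deglex order.

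For the generating claim the two-dimensional argument transfers almost verbatim, the point being that identity \eqref{divmn} uses only additivity of the conjugation, and $\nu\mapsto\nu\widetilde{\bf T}$ is additive, whence $[\mu]=[\nu][\sigma]$ and $[\hat\mu]=[\hat\nu][\hat\sigma]$ whenever $\mu=\nu+\sigma$. Concretely, given a nonzero generator $[\mu]-[\hat\mu]$ of $I\n_S$ with $\mu\in\mcm\n$, I would use that $H\n$ is the Hilbert basis of $\mcm\n$ to choose $\nu\in H\n$ with $[\nu]\mid[\mu]$ and put $\sigma=\mu-\nu$; since $L$ is linear and $\mu,\nu$ lie in $\mcm\n$, Proposition \ref{pro_M} gives $\sigma\in\mcm\n$. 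The identity
\be
[\mu]-[\hat\mu]=\frac{1}{2}([\sigma]+[\hat\sigma])([\nu]-[\hat\nu])+\frac{1}{2}([\nu]+[\hat\nu])([\sigma]-[\hat\sigma])
\ee
then writes $[\mu]-[\hat\mu]$ as a combination of the element $[\nu]-[\hat\nu]\in G\n$ and a multiple of $[\sigma]-[\hat\sigma]$, where $\sigma<\mu$ in deglex (and when $\nu$ is self-conjugate the first summand vanishes and the identity collapses to $[\mu]-[\hat\mu]=[\nu]([\sigma]-[\hat\sigma])$). Iterating yields a strictly deglex-decreasing sequence $\mu>\sigma_1>\sigma_2>\cdots$ in $\mcm\n$; since deglex is a well-order the process stops once some $[\sigma_k]-[\hat\sigma_k]$ lies in $G\n$ (a self-conjugate tail being zero), so $[\mu]-[\hat\mu]\in\la G\n\ra$ and $\la G\n\ra=I\n_S$.

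It remains to upgrade ``basis'' to ``\gb''. By Buchberger's criterion the aim is to show that for every pair $f=[\nu]-[\hat\nu]$, $g=[\mu]-[\hat\mu]$ in $G\n$, with $L=\operatorname{lcm}([\nu],[\mu])$, the $S$-polynomial $S(f,g)=\frac{L}{[\mu]}[\hat\mu]-\frac{L}{[\nu]}[\hat\nu]$ reduces to $0$ modulo $G\n$; being a binomial lying in $I\n_S$, each $S(f,g)$ is to be driven to zero through the binomials it spawns. The step I expect to be the real obstacle is precisely this reduction, because the device that makes it immediate for $n=2$ breaks down. There the two monomials of every $[\nu]-[\hat\nu]$ with $\nu\in H\n$ have disjoint support (Proposition \ref{pro_m}), so leading terms are cleanly separated and the coprimality criterion (Corollary 1.30 of \cite{HHO}) applies; moreover $I_S$ is then a lattice ideal (Proposition \ref{pro35}). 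Both facts are special to the involution: since $\widetilde{\bf T}$ has order $n>2$, the monomial $\gcd([\nu],[\hat\nu])$ need no longer be $\widetilde{\bf T}$-invariant, the minimality of $H\n$ no longer forces disjoint support, and a Hilbert-basis exponent can carry two nonzero entries inside a single $n$-block.

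I would therefore handle the $S$-polynomials by an order-$n$ adapted analysis, tracking how a common factor of $[\nu]$ and $[\mu]$ interacts with the whole cyclic orbit $\tilde{\nu}^{(1)},\dots,\tilde{\nu}^{(n-1)}$ and reducing through the enlarged generating set $\{[\nu]-[\tilde{\nu}^{(k)}]:k=1,\dots,n-1\}$, which also generates $I\n_S$. As a safer alternative I would try to inherit the \gb property directly from the reduced \gb $\mathcal{G}_{\mathfrak{M}\n}$ of the Lawrence toric ideal $I_{\Lambda(\mathfrak{M}\n)}$ produced by Algorithm \ref{alg1}: realize $G\n$ as the image of its subset $Y\n$ under the specialization $\yb^{\nu}\mapsto[\hat\nu]$, and check that this specialization preserves leading terms and standard monomials, so that a \gb is carried to a \gb.
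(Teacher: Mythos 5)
Your first half --- that $G\n$ generates $I\n_S$ --- is exactly the paper's argument: choose $\nu\in H\n$ with $[\nu]\mid[\mu]$, apply identity \eqref{divmn} (which, as you correctly note, needs only the additivity of $\nu\mapsto\hat\nu=\nu\widetilde{\bf T}$ and of $L$), and descend along a deglex-decreasing chain in $\mcm\n$. The paper's proof of Theorem \ref{th_B3} consists of precisely this step plus the sentence ``repeating the reasoning of Theorem \ref{th_B}.'' Your remark that for a self-conjugate $\nu$ the identity collapses to $[\mu]-[\hat\mu]=[\nu]([\sigma]-[\hat\sigma])$ is correct and is actually needed, since self-conjugate exponents are excluded from $G\n$.

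The gap is in the upgrade from ``generating set'' to ``Gr\"obner basis,'' and you have diagnosed its location correctly without closing it. In Theorem \ref{th_B} that upgrade rests on a coprimality property of the two monomials in each binomial (Proposition \ref{pro_m}, invoked via Corollary 1.30 of \cite{HHO}); the paper itself states at the start of the reversibility subsection that $G\n_R$ has no analogue of Proposition \ref{pro_m} when $n>2$ (this is also why $I\n_S$ fails to be a lattice ideal, in contrast with Proposition \ref{pro35}). Hence ``repeating the reasoning of Theorem \ref{th_B}'' cannot mean repeating that step, and the descent argument alone does not suffice: an S-polynomial of two elements of $G\n$ is a binomial $m_1[\hat\nu]-m_2[\hat\mu]$ whose monomial cofactors $m_1,m_2$ need not have exponents in $\mcm\n$, so it is not of the form $[\kappa]([\rho]-[\hat\rho])$ and is not reached by the descent. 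Neither of your two proposed repairs (the order-$n$ analysis of S-pairs, or transporting the Gr\"obner property from $\mathcal{G}_{\mathfrak{M}\n}$ through the specialization $\yb^{\nu}\mapsto[\hat\nu]$, which would require checking that leading terms are preserved) is carried out. As written, your argument establishes only that $G\n$ is a basis of $I\n_S$ --- which, to be fair, is all that the paper's own one-line justification of the Gr\"obner claim explicitly supplies.
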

\begin{proof}
Let $ [\mu]-[\hat{\mu}]\neq 0 $ ($ [\mu]-[\hat{\mu}]\not \in G\n   $) be an element of the generating set of the ideal  $ I\n_S$. 
We show that $[\mu]- [\hat{\mu}]\in \la  G\n \ra.$
Since  $\mu \in \mcm\n$, $H\n$ is the Hilbert basis of $\mcm\n$ 
and  $\mu\neq {\hat \mu}$, 
there exists $\nu\in H\n$ such that $\nu\mid \mu$.  Let $\sigma_1=\mu-\nu.$   
Since relation \eqref{divmn} still remains valid, 
repeating the reasoning of Theorem \ref{th_B}, 
we conclude that   $G\n$ is a \gb  basis for $I\n_S$. 
\end{proof}

Deleting from $G\n$ the redundant binomials,  we obtain the  reduced \gb
of $I\n_S$ denoted by $G\n_R$.
Adding the selfcongugate monomials to the monomials of $G\n_R$ we obtain 
the  Hilbert basis of $\mathcal{M}{\n}$.

\subsection{Invariants and normal forms} \label{sec_INF}

Let $\mathcal{V}^n$ be the space of vector fields $v: \mathbb{C}^n \rightarrow \mathbb{C}^n$, where the components of $v$ are power series in $x_1, \ldots, x_n$ that vanish at the origin. These series can be either convergent or formal.

Let $\vnj \subseteq \mathcal{V}^n$ be the space of polynomial vector fields $v: \mathbb{C}^n \rightarrow \mathbb{C}^n$, with each component $v_i$ (for $i = 1, \ldots, n$) being a homogeneous polynomial of degree $j$. Note that both $\mathcal{V}^n$ and $\vnj$ are vector spaces over $\mathbb{C}$, and $\mathcal{V}^n=\bigoplus_{j=0}^\infty\vnj$. 
  

It is clear  that  any   formal invertible  change of coordinates of the form 
		\be
		\label{yhy}
		\xb = \yb+ \  H(\yb)= \yb +\sum_{j=2}^\infty  H_j(\yb), 
		\ee
		with  $ H_{j} \in 
		\mathcal{V}^n_{j}$ for all $j\geq 2$, 
		brings system (vector field) \eqref{Asn} to a system of a similar form, 
		\be
		\label{linearni}
		\dot \yb = A_s \yb+ G(\yb),
		\ee
		where 
		$$ G(\yb)= \sum_{j=2}^\infty \ G_j(\yb), \quad \mbox{ and }  G_j(\yb)\in \vnj \quad \mbox{ for all } j \geq 2.
		$$ 
Let $\lambda_1, \ldots, \lambda_n$ be the eigenvalues of $A_s$, define $\lambda=(\lambda_1, \ldots, \lambda_n)^\top$ and $|\alpha|=\alpha_1+\alpha_2+\dots+\alpha_{n}$ for $\alpha=(\alpha_1, \ldots, \alpha_n)\in \mathbb{N}^n_0$. Consider the term $u\xb^\alpha$ (where $|\alpha|>1$) in ${\bf e}_k G(\yb)$ or ${\bf e}_k X(\xb)$ (here, ${\bf e}_k$ is the $n$-dimensional row unit vector for $k=1,\ldots, n$, and $X(\xb)$ represents the nonlinear part of \eqref{Asn}). This term is called \emph{resonant} if $k$ and $\alpha$ satisfy
		\be \label{res_c_e} 
		\lambda \cdot \alpha - \lambda_k=0.
		\ee

	

System \eqref{linearni} is said to be in the \emph{Poincar\'e--Dulac normal form}, or simply in the normal form, if $G(\yb)$ includes only resonant terms.

According to the Poincar\'e--Dulac theorem, system \eqref{Asn} can be transformed into a normal form using the substitution given in \eqref{yhy}. This transformation is  referred to as a  normalization or a normalizing transformation. 
In general, the normalization of system \eqref{linearni} is not unique.

For any $ m  \in  \mathrm{im}L\subseteq \Z^{n}  $, where $L$ is defined in \eqref{defL}, a  (Laurent)  polynomial $p(a)$, where
			$p(a) = \sum_{\nu\in \Supp(p)}p^{(\nu)}[\nu]$, is an $ m$-\emph{polynomial} 
			if $L(\nu) =  m$ for every  $\nu \in \Supp(p) \subset \N^{n\ell}_{0}$. 
			
		
		For any $m\in\Z^n$, 
		let $ R_{ m}$  be the subset of $\Q[a]$ consisting of all $ m $-polynomials. Denote by   $R$  the direct sum of   $ R_{ m}$,
		$$
		R=\bigoplus_{ m\in \Z^n }R_{ m}. 
		$$
		Since 
		$$
		R_{ m_1}R_{ m_2}\subseteq R_{ m_1+ m_2},
		$$
$R$ is a $\Z^n-$graded ring. 

Let  $\bK  =(K,K, \dots, K)$ and $ \yb^{\bK}=
y_1^K y_2^K\cdots y_n^K.$ The following 
theorem gives the structure of the coefficients 
of normal forms of systems \eqref{nSYS}.
\begin{thm} \label{Thm_nf}
There is a normal form \eqref{linearni} of system \eqref{nSYS} such that 
$$
 G_j(\yb)= \yb^{\bK} (\yb \odot 
q(a)),
$$
where $\bK  =(K,K, \dots, K)$, 
$q(a)=(q_1(a), \dots, q_n(a))^\top$ and
$$
q_i(a)\in R_{\bK} \ {\ \ for \ }  i=1,\dots, n.
$$
\end{thm}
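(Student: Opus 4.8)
The plan is to split the statement into a combinatorial part (the admissible shape of the resonant monomials) and an arithmetic part (the membership $q_i\in R_{\bK}$), and to extract the latter from a $\Z^n$-grading that the normalization can be made to respect.

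First I would pin down the resonant terms. Since the linear part is $\mathcal Z$, the eigenvalues are $\lambda_i=\zeta^{i-1}$, and a monomial $\yb^{\alpha}{\bf e}_i$ is resonant precisely when $\lambda\cdot\alpha-\lambda_i=0$, i.e. $\sum_{j}\zeta^{j-1}(\alpha_j-\delta_{ij})=0$. Writing $\beta=\alpha-{\bf e}_i\in\Z^n$ and invoking the same fact used in the proof of Theorem \ref{THMinvarM} — that for $n$ prime the only $\Q$-linear relation among $1,\zeta,\dots,\zeta^{n-1}$ is a multiple of $1+\zeta+\cdots+\zeta^{n-1}=0$ — forces $\beta=(K,\dots,K)$ for some $K\in\Z$, hence $\alpha=\bK+{\bf e}_i$ with $K\ge 0$ (as $\alpha\in\np^n$). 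Consequently the only resonant monomial in the $i$-th component of degree $j$ is $\yb^{\bK}y_i$, it occurs only when $j=nK+1$, and $G_j=0$ otherwise. This already yields the shape $G_j=\yb^{\bK}(\yb\odot q(a))$ and reduces the theorem to proving $q_i(a)\in R_{\bK}$.

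For the arithmetic part I would equip $\C[a][\xb]$ with the $\Z^n$-grading under which the ring $R=\bigoplus_m R_m$ is graded, namely $w\big(a^{(m+1)}_{T_n^m(\pb^{(k)})}\big)=T_n^m(\pb^{(k)})$, so that $w([\nu])=L(\nu)$ by \eqref{defL}, together with $w(x_i)=w(y_i)=-{\bf e}_i$. The decisive elementary observation is that system \eqref{nSYS} is then \emph{component-wise homogeneous}: the $i$-th component of $F$ has weight $-{\bf e}_i$, since both the linear term $\zeta^{i-1}x_i$ and each nonlinear term $a^{(i)}_{\mathbf q}\xb^{\mathbf q+{\bf e}_i}$ carry weight $\mathbf q-(\mathbf q+{\bf e}_i)=-{\bf e}_i$. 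I would then run the Poincar\'e--Dulac recursion while keeping this grading: the homological operator $\mathcal L$ acts diagonally on monomials (with eigenvalue $\lambda\cdot\alpha-\lambda_i$ on $\yb^{\alpha}{\bf e}_i$) and hence preserves weights, so at each order the non-resonant part of $H_j$ can be chosen with its $i$-th component of weight $-{\bf e}_i$ while the resonant part of $H_j$ is set to zero. By induction every intermediate field, and in particular every $G_j$, is component-wise homogeneous of weight $-{\bf e}_i$. Reading off the resonant coefficient then finishes the proof: $w\big(q_i(a)\,\yb^{\bK}y_i\big)=-{\bf e}_i$ together with $w(\yb^{\bK}y_i)=-\bK-{\bf e}_i$ gives $w(q_i(a))=\bK$, i.e. $q_i(a)\in R_{\bK}$. (Equivalently, by Theorem \ref{THMinvarM}, each $q_i$ is a $\C$-combination of invariant monomials of common weight $K$.)

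The main obstacle I anticipate is the verification that one full normalization step genuinely preserves the grading: one must check that substituting $\xb=\yb+H(\yb)$ into the component-wise homogeneous $F$ and multiplying by $(I+DH)^{-1}$ keeps the $i$-th component of weight $-{\bf e}_i$. This reduces to noting that $(DH)_{ij}=\partial H_i/\partial y_j$ has weight $-{\bf e}_i+{\bf e}_j$, so that each entry of $(I+DH)^{-1}=\sum_{r\ge0}(-DH)^{r}$ in position $(i,j)$ again has weight $-{\bf e}_i+{\bf e}_j$ because the weights telescope under matrix multiplication, while composition with a weight-homogeneous substitution preserves weights. Granting this bookkeeping, the recursion respects the grading and the conclusion $q_i\in R_{\bK}$ is immediate.
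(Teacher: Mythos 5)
Your proof is correct, but it follows a genuinely different route from the paper's. The paper's proof is essentially a two-line reduction: it observes that system \eqref{nSYS} is a special case of system (47) of \cite{PR}, invokes Theorem 1 of that reference (which asserts that a normal form exists whose coefficient of $y_k\yb^{\nu}$ lies in $R_{L(\nu)}$), and then identifies the resonant exponents via the resonance condition $\bi\cdot\bar\zeta=0$ and the linear-independence argument from the proof of Theorem \ref{THMinvarM} to conclude $\bi=(K,\dots,K)$. You identify the resonant monomials in exactly the same way, but instead of citing \cite{PR} you reprove the structural half of the statement from scratch: you introduce the $\Z^n$-grading $w\bigl(a^{(m+1)}_{T_n^m(\pb^{(k)})}\bigr)=T_n^m(\pb^{(k)})$, $w(x_i)=-{\bf e}_i$, note that each component of $F$ is homogeneous of weight $-{\bf e}_i$, check that the homological operator and the substitution $\xb=\yb+H(\yb)$ (via the telescoping of weights in $(I+DH)^{-1}$) preserve this grading, and read off $w(q_i)=\bK$, i.e.\ $q_i\in R_{\bK}$. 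This is in substance the content of the external theorem the paper cites, so your argument buys self-containedness at the cost of the bookkeeping you candidly flag; the paper's version is shorter but opaque without consulting \cite{PR}. The one cosmetic caveat is that your weight computation for the resonance condition is applied to $\beta=\alpha-{\bf e}_i$, which has one entry possibly equal to $-1$ rather than being nonnegative; this is harmless because the independence of $\{1,\zeta,\dots,\zeta^{n-1}\}$ modulo the single relation $1+\zeta+\cdots+\zeta^{n-1}=0$ forces all entries of $\beta$ to be equal over $\Z$, and equality with the nonnegative entries then rules out the value $-1$ — worth one sentence if you write this up.
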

\begin{proof}
System \eqref{nSYS}
is a particular case of system 
(47) of \cite{PR}, which is 
\be \label{sys_k}  
		\dot x_k=  \lambda_k x_k+ x_k \sum_{{\bi} \in \Omega_k}   a^{(k)}_{\bi} \xb^{\bi}, \quad   k=1,\dots, n,
		\ee
		where $\Omega_k$ is a  set of $n$-tuples $\bi=(i_1,\dots,i_n)$ whose $k-$th entry is from $\N_{-1}=\{-1\} \cup \N_0$ and all other entries are from $\N_0$. 

The term   $x_k a^{(k)}_{\bi} \xb^{\bi}$ in  
the $k$-th equation of \eqref{sys_k} is resonant   
if 
\be \label{res_i}
(\bi +{\bf e}_k)\cdot \lambda -\lambda_k=
\bi \cdot \lambda=0. 
\ee
By  Theorem 1 of \cite{PR}, there is a normal 
form of \eqref{sys_k} such that if $ p(a)  y_k \yb^{\nu}$ is a term in  
the $k$-th equation of the normal  form,
then $p(a)$ is $L(\nu)$-polynomial (that is, $p(a)\in R_{L(\nu)}$). 

In case of system \eqref{nSYS}, condition \eqref{res_i} is equivalent to
\be \label{eq_Iz}
\bi\cdot  \bar \zeta=0.
\ee
It is shown in the proof of Theorem \ref{THMinvarM} that all nonnegative solutions
$\bi=(i_1,i_2,\dots. i_n)$ to \eqref{eq_Iz}
are of the form $i_1=i_2=\dots=i_n=i$. 
Thus, by  Theorem 1 of \cite{PR}, we conclude that  the present theorem holds true. 
\end{proof} 
 
\begin{remark}
 Note that in the present paper $L(\nu)$ is 
 a column vector, whereas in \cite{PR} it is a 
 row vector. 
\end{remark}

Denote by $\Q[H\n ]$ the polynomial
subalgebra generated by the set of monomials $$\{ [\nu] : \nu \in \mathcal{M}\n \}.$$ 
From  Theorem   \ref{Thm_nf}, we have the following. 
\begin{cor} \label{cor_inv}
The polynomials $q_i(a)$
of the statement of Theorem \ref{Thm_nf}  are elements of $\Q[H\n ]$.
\end{cor}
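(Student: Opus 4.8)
The plan is to combine Theorem \ref{Thm_nf} with Theorem \ref{THMinvarM}. By Theorem \ref{Thm_nf}, each coefficient $q_i(a)$ lies in $R_{\bK}$, meaning it is a $\bK$-polynomial: writing $q_i(a)=\sum_{\nu\in\Supp(q_i)}q_i^{(\nu)}[\nu]$, every exponent vector $\nu$ appearing in the support satisfies $L(\nu)=\bK=(K,K,\dots,K)^\top$ for one fixed nonnegative integer $K$. The goal is simply to recognize that each such monomial $[\nu]$ is an invariant of group \eqref{ch3d}, hence belongs to the subalgebra $\Q[H\n]$.

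First I would fix an index $i$ and an arbitrary $\nu\in\Supp(q_i)$. Since $q_i(a)\in R_{\bK}$, by the definition of a $\bK$-polynomial we have $L(\nu)=(K,K,\dots,K)^\top$. By the definition of the monoid $\mcm\n$, this is exactly the condition $\nu\in\mcm\n$ (the defining union over $K\in\N_0$ picks out precisely those $\nu$ with $L(\nu)=(K,\dots,K)^\top$). Next I would invoke Theorem \ref{THMinvarM}, which asserts that $[\nu]$ is an invariant of group \eqref{ch3d} if and only if $\nu\in\mcm\n$. Therefore each monomial $[\nu]$ in the support of $q_i$ is an invariant, and so $[\nu]\in\Q[H\n]$ by the very definition of $\Q[H\n]$ as the subalgebra generated by $\{[\nu]:\nu\in\mcm\n\}$.

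Finally, since $\Q[H\n]$ is a subalgebra (in particular closed under $\Q$-linear combinations), and $q_i(a)$ is a $\Q$-linear combination $\sum_{\nu}q_i^{(\nu)}[\nu]$ of such invariant monomials, it follows that $q_i(a)\in\Q[H\n]$ for every $i=1,\dots,n$. This completes the argument.

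I do not expect any genuine obstacle here: the corollary is a direct translation between the three equivalent descriptions of the same set of exponent vectors, namely membership in $R_{\bK}$, membership in $\mcm\n$, and invariance under \eqref{ch3d}. The only point requiring mild care is to confirm that the single value $K$ attached to $R_{\bK}$ is nonnegative so that $\nu$ genuinely lands in the monoid $\mcm\n$ (rather than in some $R_{m}$ with $m$ not of the required diagonal form); but this is guaranteed because $q_i(a)\in R_{\bK}$ is stated in Theorem \ref{Thm_nf} with $\bK=(K,K,\dots,K)$, and the nonnegativity of $K$ was already established inside the proof of Theorem \ref{THMinvarM}.
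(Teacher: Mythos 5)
Your proposal is correct and is exactly the (implicit) argument the paper intends: the corollary is stated as an immediate consequence of Theorem \ref{Thm_nf}, and your chain $q_i(a)\in R_{\bK}\Rightarrow L(\nu)=(K,\dots,K)^\top$ with $K\in\N_0$ for every $\nu\in\Supp(q_i)\Rightarrow\nu\in\mcm\n\Rightarrow[\nu]\in\Q[H\n]$, followed by closure under $\Q$-linear combinations, is precisely the translation the authors leave to the reader. Your remark on verifying the nonnegativity of $K$ (already settled in the proof of Theorem \ref{THMinvarM}) is the only point of mild care, and you handle it correctly.
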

{Thus, we have the important property  that the coefficients  $q_i(a)$ ($i=1,\dots,n$) of the normal form  are    polynomials with  rational coefficients in the invariants of group \eqref{ch3d}.
This property established earlier in the two-dimensional case in \cite{LiuLi} has been  proven to be very useful in the investigations of bifurcations of limit cycles and critical periods in the two-dimensional case (see e.g. \cite{FLRS,LLR,LPR}).
}

\begin{remark} 
		For a vector field  $ v(\xb)\in \mathcal{V}^n $ of the form \eqref{Asn}, we denote by $Dv(\xb)$ the $n\times n$ matrix of partial derivatives of $v(\xb)$. 
		The linear operator $\pounds : \mathcal{V}^n\to \mathcal{V}^n$  defined by 
		\be \label{ho}
		(\pounds v)(\xb) =Dv(\xb) A_sx - A_s v(\xb)
		\ee
		is called the \emph{homological operator}. 
  
It is clear that system \eqref{Asn} is in the Poincar\'e-Dulac normal form if $v(\xb) \in \ker \pounds. $
Since 
the algebra of polynomial first integrals 
of \eqref{Zlin} is generated by the single monomial \eqref{I_int},
the Stanley decomposition \cite{Mur,Stan,SW} of the normal form module in $\mathcal{V}^n$ is
$$
\ker \pounds =\oplus_{i=1}^n \C[[\Phi]] x_i {\bf e}_i,
$$
where $\Phi$ is the monomial \eqref{I_int}.

It follows from Theorem \ref{Thm_nf} that the  decomposition of the normal form module can be written as 
\be \label{Stan_dec}
 \ker \pounds =\oplus_{i=1}^n  \Q[H\n ][[\Phi]] x_i {\bf e}_i.
\ee 
\end{remark}


\subsection{Reversibility  and equivariance  in higher dimensional systems}
 
Unlike in the two-dimensional case,  $G\n_R$ does not have a property similar to the one in Proposition \ref{pro_m}. Consequently,  the variety of $I\n_S$ is not the  Zariski closure of the set of all equivariant systems in family \eqref{nSYS} and the ideal  $I\n_S$ is not a lattice ideal. 

In this section, we study the relationship between the ideal $I\n_S$
and the ideals defining the sets of $\zeta$-reversible and 
equivariant systems in family \eqref{nSYS}.
 
\begin{pro}
System     \eqref{nSYS} is $\zeta$-reversible if 
\be\label{zRev0}
\zeta R(Ax)=\widetilde T_n R(x)
\ee
for some matrix $A$ of the form  \eqref{Amatrix},  and it  is  equivariant  if 
\be\label{eRev0}
 R(Ax)=\widetilde T_n R(x)
\ee
for some matrix $A$ of the form  \eqref{Amatrix}. 
\end{pro}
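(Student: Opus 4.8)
The plan is to substitute the compact form $F(\xb)=\mathcal{Z}\xb+\xb\odot R(\xb)$ into the defining identity \eqref{AF} and to peel it apart into its linear and nonlinear homogeneous pieces, since $\mathcal{Z}\xb$ is linear while $\xb\odot R(\xb)$ carries only terms of degree $\ge 2$. Matching $AF(\xb)$ with $\zeta F(A\xb)$ degree by degree reduces the whole statement to the two separate requirements
\[
A\mathcal{Z}\xb=\zeta\,\mathcal{Z}(A\xb)\qquad\text{and}\qquad A\bigl(\xb\odot R(\xb)\bigr)=\zeta\,(A\xb)\odot R(A\xb),
\]
after which I would show that the first holds automatically for every $A$ of the form \eqref{Amatrix}, while the second is equivalent to the hypothesis \eqref{zRev0}.

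For the linear piece I would establish the commutation relation $A\mathcal{Z}=\zeta\mathcal{Z}A$. Using \eqref{ABT} from the proof of Proposition \ref{prop25}, the constraint $\alpha_1\cdots\alpha_n=1$ of \eqref{Al_p} lets me write $A=\mathcal{B}\widetilde T_n\mathcal{B}^{-1}$ with $\mathcal{B}$ diagonal; since $\mathcal{B}$ and $\mathcal{Z}$ are both diagonal they commute, so it suffices to prove $\widetilde T_n\mathcal{Z}=\zeta\mathcal{Z}\widetilde T_n$. This is a one-line check on the standard basis: $\widetilde T_n\mathbf{e}_j=\mathbf{e}_{j-1}$ and $\mathcal{Z}\mathbf{e}_j=\zeta^{j-1}\mathbf{e}_j$ (indices read cyclically), so both sides carry $\mathbf{e}_j$ to $\zeta^{j-1}\mathbf{e}_{j-1}$, the wrap-around index $j=1$ being exactly where $\zeta^n=1$ is used. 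Hence $A\mathcal{Z}\xb=\zeta\mathcal{Z}(A\xb)$ with no condition on $R$.

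It remains to match the nonlinear pieces, and this is where the hypothesis is consumed. Since $A$ is a weighted cyclic shift, for any vector $w$ one has $(Aw)_i=\alpha_i w_{i+1}$ with indices mod $n$; applying this to $w=\xb\odot R(\xb)$ gives $\bigl(A(\xb\odot R(\xb))\bigr)_i=\alpha_i x_{i+1}R_{i+1}(\xb)$, while $\bigl(\zeta(A\xb)\odot R(A\xb)\bigr)_i=\zeta\,\alpha_i x_{i+1}R_i(A\xb)$. Cancelling the common factor $\alpha_i x_{i+1}$, equality for all $i$ reads $R_{i+1}(\xb)=\zeta R_i(A\xb)$, that is $\widetilde T_n R(\xb)=\zeta R(A\xb)$, which is precisely \eqref{zRev0}; this proves the $\zeta$-reversible claim. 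The equivariant statement is obtained by running the identical nonlinear matching with the scaling factor set to $1$, so that the condition collapses to $\widetilde T_n R(\xb)=R(A\xb)$, i.e. \eqref{eRev0}. Here it is worth emphasising that the whole content of the equivariance assertion sits in this nonlinear matching: the linear part of \eqref{nSYS} carries the $\zeta$-twist recorded by $A\mathcal{Z}=\zeta\mathcal{Z}A$, so \eqref{eRev0} is exactly the statement that the nonlinear field $\xb\odot R(\xb)$ is $A$-equivariant.

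The computations themselves are elementary, so the only genuine obstacle is keeping the bookkeeping airtight: one must track the Hadamard product, left multiplication by the weighted cyclic matrix $A$, and the cyclic relabelling of the components of $R$ simultaneously, handling in particular the wrap-around slot $n\to 1$ where the identity $\zeta^n=1$ silently enters. Recording once and for all that $(A\xb)_i=\alpha_i x_{i+1}$ and $(\widetilde T_n R)_i=R_{i+1}$ (cyclically) makes the factor $\alpha_i x_{i+1}$ appear symmetrically on both sides and cancel, which is what collapses the two matrix identities to the single scalar family relating $R_{i+1}(\xb)$ and $R_i(A\xb)$.
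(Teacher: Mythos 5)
Your proof is correct and follows essentially the same route as the paper's: the linear parts are matched via the commutation identity $\zeta\mathcal{Z}\widetilde T_n=\widetilde T_n\mathcal{Z}$ transported by the conjugation $A=\mathcal{B}\widetilde T_n\mathcal{B}^{-1}$, and the nonlinear parts are matched through the identity $A(\xb\odot R(\xb))=A\xb\odot(\widetilde T_n R(\xb))$, which you verify componentwise where the paper states it directly. No gaps.
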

\begin{proof}
By Definition \ref{defZrev},
system \eqref{nSYS} is  $\zeta$-reversible 
if 
\be \label{Zrev1}
 \zeta {\mc Z}(Ax) +  \zeta (A x) \odot R(Ax)= A 
{\mc Z}x+ A (x \odot R(x)).
\ee

By \eqref{ABT}, we can  write  
$A={\mc B}\widetilde T_n {\mc B}^{-1}. $
Since 
$$
 \zeta {\mc Z}A  - A 
{\mc Z}  =   \zeta {\mc Z} {\mc B}\widetilde T_n {\mc B}^{-1}   - {\mc B}\widetilde  T_n {\mc B}^{-1} 
{\mc Z}={\mc B}\left( \zeta {\mc Z}\widetilde T_n  - \widetilde T_n  
{\mc Z}\right){\mc B}^{-1}=0,
$$
we see  that \eqref{Zrev1} is equivalent to 
\be \label{zRev}
\zeta (A x) \odot R(Ax)=A (x \odot R(x)).
\ee
Observing that 
$$
A (x \odot R(x)) = Ax \odot (\widetilde T_n R(x) ),
$$
we conclude that   the latter equality  is equivalent to \eqref{zRev0}.

The correctness of \eqref{eRev0} is established similarly. 
\end{proof} 

Recall that, for ${\bf p}=(p_1,\dots, p_n)^\top$ and $\alpha=(\alpha_1,\dots, \alpha_n)$,
we denote 
$$
\alpha^{\bf p}:=\alpha_1 ^{p_1} \alpha_2 ^{p_2} \dots \alpha_n^{p_n }.
$$
Since  
$$
(A {\bf x})^{\bf p} =\alpha^{\bf p} {\bf x}^{T_n({\bf p})}, 
$$
 we see  from \eqref{zRev0} that the condition of $\zeta$-reversibility of system \eqref{nSYS} can be written as 
 $$
\zeta a^{(i)}_{T^{i-1}(\pk)} {\bf \alpha}^{T_n^{i-1}(\pk)}= a^{(i+1)}_{T_n^i(\pk)} \ \  \text{for\ } i=1, \dots, n-1, \ \     a^{(n)}_{T_n^{n-1}(\pk)}= a^{(1)}_{\pk}, \ \  
 $$
for all $k=1,\dots , \ell$; or in a short form, again using the  notation $]m[_n$ for the remainder of integer $m$ when divided by $n$, as
\be\label{cond_rev}
\zeta a^{(i)}_{T^{i-1}_n(\pk)} {\bf \alpha}^{T_n^{i-1}(\pk)}= a^{(]i[_n+1)}_{T^i_n(\pk)}\,,\quad i=1, \dots, n,  \mbox { and } k=1,\dots , \ell. 
\ee
Setting in \eqref{cond_rev} $\zeta =1$, we obtain the conditions for equivariance of system \eqref{nSYS}. 

\begin{pro}\label{pro48}
1) The Zariski closure of the set of $\zeta$-reversible systems in the space of parameters of system \eqref{nSYS}  is the variety of the ideal
\be \label{Izeta}
I_\zeta=I^{(\zeta)}\cap \C[a],
\ee
where 
\begin{multline*}
I^{(\zeta)}=\left\langle 1-\alpha_1 \alpha_2 \cdots \alpha_n ,
\zeta a^{(i)}_{T^{i-1}_n(\pk)} {\bf \alpha}^{T_n^{i-1}(\pk)} - a^{(]i[_n +1)}_{T^i_n(\pk)} : \ i=1, \dots, n, \ k=1,\dots , \ell 
\right\rangle
  \end{multline*}
is an ideal 
in the ring  ${\mathbb C}[ \alpha^\pm,a]:=
\C[\alpha_1,\dots, \alpha_n, \alpha_1^{-1},\dots, \alpha_n^{-1}, a]$ of Laurent polynomials.
\\
2)
The  Zariski closure of the set of equivariant  systems
in  the space of parameters of system \eqref{nSYS}
is the variety of the ideal
\be\label{I_E}
\mathcal{I}_E=I^{(E)}\cap \C[a],
\ee
where 
\begin{multline*}
I^{(E)}=\left\langle 1-\alpha_1 \alpha_2 \cdots \alpha_n ,
a^{(i)}_{T^{i-1}_n(\pk)} {\bf \alpha}^{T_n^{i-1}(\pk)} - a^{(]i[_n +1)}_{T^i_n(\pk)} :  \ i=1, \dots, n, \ k=1,\dots , \ell
\right \rangle.
  \end{multline*}
 \\
  {
3)  The ideals ${\mc I}_E $ and $ I_\zeta$ are prime.}
\end{pro}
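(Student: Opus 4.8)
The plan is to realize both $I_\zeta$ and $\mathcal I_E$ as kernels of explicit ring homomorphisms into integral domains; this simultaneously identifies their varieties with the closures of the reversible (resp.\ equivariant) loci and yields primality. First I would solve the defining relations \eqref{cond_rev}. For a fixed $k$, the relations with $i=1,\dots,n-1$ express each $a^{(i+1)}_{T_n^i(\pk)}$ in terms of $a^{(1)}_{\pk}$, giving
\be
a^{(i)}_{T_n^{i-1}(\pk)}=\zeta^{\,i-1}\,c_k\,\prod_{m=0}^{i-2}\alpha^{T_n^m(\pk)},\qquad i=1,\dots,n,
\ee
where $c_k$ stands for $a^{(1)}_{\pk}$ and the empty product ($i=1$) is $1$. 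The remaining relation (the case $i=n$, which closes the cycle) reads $\zeta\, a^{(n)}_{T_n^{n-1}(\pk)}\alpha^{T_n^{n-1}(\pk)}=a^{(1)}_{\pk}$; substituting the formula above turns its left-hand side into $\zeta^{\,n}c_k(\alpha_1\cdots\alpha_n)^{s_k}$ with $s_k=\sum_{j}p_j^{(k)}$, because $\sum_{m=0}^{n-1}T_n^m(\pk)=(s_k,\dots,s_k)^\top$. Since $\zeta^n=1$, this relation is an automatic consequence of $\alpha_1\cdots\alpha_n=1$, and this computation is the technical heart of the argument.

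Next I would introduce the affine variety $Y=V(1-\alpha_1\cdots\alpha_n)\times\C^\ell$, whose coordinate ring is the domain $D=\C[\alpha^\pm,c]/\la 1-\alpha_1\cdots\alpha_n\ra\cong\C[\alpha_1^\pm,\dots,\alpha_{n-1}^\pm,c_1,\dots,c_\ell]$ (solve $\alpha_n=(\alpha_1\cdots\alpha_{n-1})^{-1}$), together with the morphism $\phi\colon Y\to\C^{n\ell}$ given coordinatewise by the displayed formulas. By construction the image of $\phi$ is exactly the set $\mathcal R_\zeta$ of $\zeta$-reversible systems: any $\zeta$-reversible $a$ with witness $\alpha$ equals $\phi(\alpha,c)$ for $c_k=a^{(1)}_{\pk}$, and conversely every $\phi(\alpha,c)$ satisfies \eqref{cond_rev}, the case $i=n$ being exactly the cycle identity above. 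Let $\phi^*\colon\C[a]\to D$ be the comorphism and $q\colon\C[\alpha^\pm,a]\to D$ its extension fixing the $\alpha_i$. Parts 1 and 2 then follow from the standard fact that the Zariski closure of the image of a morphism is cut out by the kernel of its comorphism: $\overline{\mathcal R_\zeta}=\overline{\phi(Y)}=V(\ker\phi^*)$.

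It remains to identify $\ker\phi^*$ with $I_\zeta$ and to deduce primality. I would check that $q$ is surjective (each $\alpha_i^{-1}=\prod_{j\ne i}\alpha_j$ and each $c_k$ lies in the image) and that $q$ annihilates every generator of $I^{(\zeta)}$, so $I^{(\zeta)}\subseteq\ker q$. For the reverse inclusion, the relations for $i=1,\dots,n-1$ let one eliminate from $\C[\alpha^\pm,a]/I^{(\zeta)}$ all variables $a^{(i)}_{T_n^{i-1}(\pk)}$ with $i\ge 2$, while the cycle computation shows that the only surviving relation is $1-\alpha_1\cdots\alpha_n$; hence the induced map $\C[\alpha^\pm,a]/I^{(\zeta)}\to D$ is an isomorphism and $\ker q=I^{(\zeta)}$. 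Contracting to $\C[a]$ gives $\ker\phi^*=\ker q\cap\C[a]=I^{(\zeta)}\cap\C[a]=I_\zeta$, which completes part 1, and since $D$ is a domain the kernel $I_\zeta$ is prime, proving part 3. Setting $\zeta=1$ throughout, the identical argument applies verbatim to $I^{(E)}$, $\mathcal I_E$ and the equivariant locus, establishing part 2 and the primality of $\mathcal I_E$.

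The points requiring care are the following. Because the first entry of $\pk$ may equal $-1$, the exponents $T_n^m(\pk)$ can be negative, so the monomials $\alpha^{T_n^m(\pk)}$ are genuinely Laurent; working inside $D$, where every $\alpha_i$ is a unit, is what makes both the elimination in the previous paragraph and the surjectivity of $q$ legitimate. The one genuinely nonformal step, and the main obstacle, is the cycle-closing identity, for which $\zeta^n=1$ and $\sum_{m=0}^{n-1}T_n^m(\pk)=(s_k,\dots,s_k)^\top$ are both essential; I note that neither this identity nor the rest of the argument uses that $n$ is prime.
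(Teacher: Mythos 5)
Your proof is correct, and it is essentially a fully worked-out version of what the paper only sketches, organized around a single construction rather than two separate ones. The paper disposes of parts 1 and 2 in one line by applying the Closure Theorem to the projection of the variety of $I^{(\zeta)}$ (resp.\ $I^{(E)}$) onto the $a$-coordinates, since the $\zeta$-reversible (resp.\ equivariant) locus is by definition that projection; for part 3 it only gives a hint: adjoin variables $u_1,\dots,u_\ell$ with generators $a^{(1)}_{{\bf p}^{(k)}}-u_k$ and repeat the device of Subsection \ref{sub32}, i.e.\ exhibit the elimination ideal as the kernel of a homomorphism into a Laurent-polynomial domain. Your explicit solution of the recurrence \eqref{cond_rev}, namely $a^{(i)}_{T_n^{i-1}(\pk)}=\zeta^{i-1}c_k\prod_{m=0}^{i-2}\alpha^{T_n^m(\pk)}$, together with the cycle-closing verification using $\zeta^n=1$, $\sum_{m=0}^{n-1}T_n^m(\pk)=(s_k,\dots,s_k)^\top$ and $\alpha_1\cdots\alpha_n=1$, is exactly what carrying out that hint amounts to (your $c_k$ play the role of the paper's $u_k$), and the paper nowhere records this computation, including the fact that the $i=n$ relation becomes redundant modulo $\la 1-\alpha_1\cdots\alpha_n\ra$. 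What your route buys: one parametrization $\phi\colon Y\to\C^{n\ell}$ yields all three parts simultaneously — the closure statements via the standard fact that the closure of the image of a morphism is the variety of the kernel of its comorphism, and primality for free because $D$ is a domain — and you moreover prove the sharper identification $\ker q=I^{(\zeta)}$, slightly more than the proposition requires. What the paper's route buys is brevity: the Closure Theorem settles parts 1 and 2 without solving anything. Your closing observation that primality of $n$ plays no role here is also accurate; it is needed elsewhere (e.g.\ in the proof of Theorem \ref{THMinvarM}), not in this proposition.
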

{
\begin{proof}
The correctness of statements 1 and 2 
follows from the Closure  Theorem  (see e.g. \cite{Cox}). 

To prove 3), the same argument as in Subsection \ref{sub32} can be used to show that ${\mc I}_E$ and $ I_\zeta$ are prime ideals after considering the ideal in  $\C[\alpha_1,\dots, \alpha_n, \alpha_1^{-1},\dots, \alpha_n^{-1}, a, u_1, \dots, u_\ell ]$ that is generated by the set of all generators of $I^{(\zeta)}$ in addition to  
$$
a^{(1)}_{{\bf p}^{(1)}}- u_1, \dots, a^{(1)}_{{\bf p}^{(\ell)}}- u_\ell.
$$

\end{proof}
}

Note that the ideals $ I^{(\zeta)}$ and $ I^{(E)}$
defined above are ideals in the Laurent polynomial ring. However, we can treat them as ideals in $\C[\alpha, a]$ after multiplying their polynomial generators by a suitable monomial in variables $\alpha_1, \dots, \alpha_n$.

\begin{thm}
 If the parameters of system \eqref{nSYS} belong to the variety of the ideal $I_\zeta$, then the corresponding system has a  first integral of the form \eqref{Psi_int}.
\end{thm}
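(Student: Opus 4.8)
The plan is to exhibit the set of integrable systems as a Zariski-closed subset of the parameter space and then to squeeze ${\bf V}(I_\zeta)$ inside it using the density of the genuinely $\zeta$-reversible systems. Let $\mathcal C\subseteq\C^{n\ell}$ denote the set of parameter values for which system \eqref{nSYS} admits a formal first integral of the form \eqref{Psi_int}. The first thing I would establish is that $\mathcal C$ is Zariski closed. For the linear part $\mc Z\xb$ the monomial $\Phi=x_1x_2\cdots x_n$ is a first integral, and to build $\Psi=\Phi+\text{h.o.t.}$ for the full system one solves a homological equation order by order; by the resonance analysis carried out in the proof of Theorem \ref{THMinvarM} the only resonant obstructions are multiples of powers of $\Phi$, so one can always arrange $\dot\Psi=\sum_{k\ge 1}g_k(a)\,\Phi^{k+1}$, and a first integral of the form \eqref{Psi_int} exists precisely when all the focus quantities $g_k(a)$ vanish. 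The decisive point is that each $g_k$ is a \emph{polynomial} in the parameters $a$: this is exactly the polynomiality supplied by the normal-form analysis of Theorem \ref{Thm_nf}, where every coefficient of the Poincar\'e--Dulac normal form is an element of $R_{\bK}$ and hence a polynomial in $a$. Consequently $\mathcal C={\bf V}(\la g_1,g_2,\dots\ra)$ is an algebraic variety.

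Next I would invoke Proposition \ref{prop25}: every $\zeta$-reversible system in family \eqref{nSYS} admits a first integral of the form \eqref{Psi_int}. Writing $\mathcal R_\zeta\subseteq\C^{n\ell}$ for the set of parameters of $\zeta$-reversible systems, this gives $\mathcal R_\zeta\subseteq\mathcal C$. Since $\mathcal C$ is Zariski closed, it then contains the Zariski closure $\overline{\mathcal R_\zeta}$.

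Finally I would identify this closure. By part 1 of Proposition \ref{pro48}, the Zariski closure of the set of $\zeta$-reversible systems is precisely the variety of $I_\zeta$, that is $\overline{\mathcal R_\zeta}={\bf V}(I_\zeta)$. Combining this with the previous inclusion yields ${\bf V}(I_\zeta)\subseteq\mathcal C$. Hence, if the parameters of \eqref{nSYS} lie in ${\bf V}(I_\zeta)$, then all focus quantities vanish and the system possesses a first integral of the form \eqref{Psi_int}, which is what we wanted.

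The hard part is the first paragraph, namely guaranteeing that the integrability conditions are genuinely polynomial in $a$ and that their common zero set coincides with $\mathcal C$. Everything else is formal bookkeeping: Proposition \ref{prop25} populates $\mathcal C$ with the reversible systems, and Proposition \ref{pro48} identifies their closure with ${\bf V}(I_\zeta)$. As the remark following Proposition \ref{prop25} points out, the integral produced here may be merely formal, so no convergence question intervenes and the polynomial (Bautin-type) description of $\mathcal C$ is all that is required.
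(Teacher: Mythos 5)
Your proof is correct and follows essentially the same route as the paper: reversible systems are integrable by Proposition \ref{prop25}, the set of systems with an integral of the form \eqref{Psi_int} is Zariski closed, and Proposition \ref{pro48} identifies $\overline{\mathcal R_\zeta}$ with ${\bf V}(I_\zeta)$. The only difference is that the paper simply cites Proposition 8 of \cite{LPW} for the algebraicity of the integrability set, whereas you sketch the underlying focus-quantity (Bautin-type) argument for it; both are fine.
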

\begin{proof}
 By Proposition \ref{prop25},   $\zeta$-reversible systems \eqref{nSYS} have a  first  integral of the form  \eqref{Psi_int}.
 
 According to Proposition  8  of \cite{LPW}   the set  in the space of parameters corresponding to  system
 with such integral is an algebraic set. Taking into account the previous proposition we conclude that the statement of the theorem holds.
\end{proof}
   

Recall that  $|\nu|=\nu_1+\nu_2+\dots+\nu_{n\ell}$ for any $\nu\in {\mathcal M}\n$.

\begin{pro}\label{pro_zeta_rev}
If system \eqref{nSYS} is $\zeta$-reversible, then, for any $\nu\in {\mathcal M}\n$, it holds that 
\be\label{71}
\zeta^{\left|\nu\right|} [\nu]-[\hat{\nu}]=0,
\ee
Furthermore, if the system is  equivariant, then, for any $\nu\in {\mathcal M}\n$, 
\be 
\label{72}
[\nu]-[\hat{\nu}]=0.
\ee
\end{pro}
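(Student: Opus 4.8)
The plan is to turn \eqref{71}--\eqref{72} into a monomial identity that holds on the whole reversibility variety, exploiting the explicit relations \eqref{cond_rev}; the constraint $\nu\in\mcm\n$ will be invoked only at the last step, to eliminate the dependence on the auxiliary parameters $\alpha_i$. I fix a $\zeta$-reversible system, so that \eqref{cond_rev} holds for some $\alpha_1,\dots,\alpha_n$ with $\alpha_1\cdots\alpha_n=1$ (this is \eqref{Al_p}). Because both $[\nu]$ and $[\hat\nu]$ factor as products over the blocks $k=1,\dots,\ell$ and $|\nu|=\sum_k|\nu^{\{k\}}|$, it is enough to establish, for each block $k$, the $\zeta$- and $a$-part of the identity $[\hat\nu]_k=\zeta^{|\nu^{\{k\}}|}[\nu]_k$ up to a factor depending only on the $\alpha_i$, and then to show that the accumulated $\alpha$-factor is trivial after summing over $k$.

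Within block $k$ I would iterate the chain \eqref{cond_rev} for $i=1,\dots,n-1$ to write each parameter as $a^{(m+1)}_{T_n^m(\pk)}=\zeta^{m}\bigl(\prod_{j=0}^{m-1}\alpha^{T_n^j(\pk)}\bigr)\,a^{(1)}_{\pk}$, and recall that $\hat\nu=\nu\widetilde{\bf T}=\tilde\nu^{(n-1)}$ by \eqref{hat_nu} and \eqref{nuk}, so that passing from $\nu$ to $\hat\nu$ merely shifts the $n$ block exponents cyclically. Substituting these expressions into $[\nu]_k$ and $[\hat\nu]_k$, the power of the single parameter $a^{(1)}_{\pk}$ matches automatically (the exponents are only permuted), the power of $\zeta$ collects to $\zeta^{|\nu^{\{k\}}|}$ after reduction modulo $n$ using $\zeta^n=1$, and the exponent of each $\alpha_i$ collects --- after an Abel-type rearrangement --- to exactly the block-$k$ summand $\sum_{m=0}^{n-1}\nu_{(k-1)n+m+1}\,(T_n^m(\pk))_i$ of $L^i(\nu)$ (as written in \eqref{L_ndim}) minus the $i$-independent quantity $\nu_{kn}\sum_{l=1}^n p_l^{(k)}$.

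Summing over the blocks, the total exponent of $\alpha_i$ becomes $L^i(\nu)-\sum_{k}\nu_{kn}\sum_{l=1}^n p_l^{(k)}$. This is the only place where membership $\nu\in\mcm\n$ enters: it forces $L^1(\nu)=\cdots=L^n(\nu)=K$, so all these exponents coincide, and the entire $\alpha$-contribution collapses to $(\alpha_1\cdots\alpha_n)^{Q}=1$ for the common value $Q=K-\sum_k\nu_{kn}\sum_{l=1}^n p_l^{(k)}$. What remains is precisely $[\hat\nu]=\zeta^{|\nu|}[\nu]$, which is \eqref{71}; setting $\zeta=1$ throughout (the equivariant case, i.e. \eqref{cond_rev} with $\zeta=1$) gives $[\hat\nu]=[\nu]$, which is \eqref{72}.

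I expect the main obstacle to be the index bookkeeping rather than any conceptual difficulty: one must carefully track the cyclic permutation $T_n$ and the remainder map $]m[_n$ through the substitution, and in particular recognize that the exponent of $\alpha_i$ accumulated in block $k$ is exactly the block-$k$ contribution to $L^i(\nu)$. Once this identification is in place, the cancellation using $\nu\in\mcm\n$ together with $\alpha_1\cdots\alpha_n=1$ is immediate, and the two cases $\zeta\neq 1$ and $\zeta=1$ are handled uniformly.
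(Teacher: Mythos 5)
Your proposal is correct and follows essentially the same route as the paper's proof: both apply the reversibility relations \eqref{cond_rev} to relate $[\hat\nu]$ to $\zeta^{|\nu|}[\nu]$ times a monomial in the $\alpha_i$ whose exponents are (up to an $i$-independent shift) the components $L^i(\nu)$, and then use $L^1(\nu)=\cdots=L^n(\nu)$ together with $\alpha_1\cdots\alpha_n=1$ to kill the $\alpha$-factor. Your normalization of every parameter in a block to $a^{(1)}_{\pk}$ is only a bookkeeping variant of the paper's direct one-step substitution, and your index computations check out.
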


\begin{proof}
Let $\nu\in {\mathcal M}\n$.
Then by \eqref{hat_nu}, 
$\hat \nu = \nu \widetilde{{\bf T}}$
and using \eqref{cond_rev}, we have 
$$
[\hat \nu]= \zeta^{|\nu|} [\nu]
\alpha^ { \sum_{j=1}^n \sum_{k=1}^{\ell} \left(T_n^j(\widetilde{\bf{p}}^{(k)})\cdot \nu^{\{k\}} \right)\,{\bf e}_j
},
$$
where ${\bf e}_j$ is the $j$-th unit vector.
Taking into account \eqref{Lmap}, \eqref{defL}, \eqref{Al_p}
and that $L(\nu)=(K_0,\dots, K_0)$ for a $K_0\in \N_{0}$, we 
obtain 
$$
[\hat \nu]= \zeta^{|\nu|} [\nu] \alpha^{L(\nu)}=
 \zeta^{|\nu|} [\nu] \alpha_1^{K_0}\cdots \alpha_n^{K_0}=  \zeta^{|\nu|} [\nu] (\alpha_1 \alpha_2\cdots \alpha_n)^{K_0} =\zeta^{|\nu|} [\nu].
$$
Therefore \eqref{71} holds. The correctness of \eqref{72} can be verified similarly. 
\end{proof}

\begin{lemma}  
If $\nu =\hat \nu $ 
then   $\zeta^{\left| \nu \right|}[\nu]-[\hat{\nu}]=0$.
\end{lemma}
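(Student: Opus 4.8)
The plan is to reduce the claimed identity to a purely arithmetic statement about $|\nu|$ and then read the answer off the block structure of $\widetilde{\bf T}$. First I would observe that the hypothesis $\nu=\hat\nu$ forces $[\nu]=[\hat\nu]$ immediately, since the monomials $[\nu]$ and $[\hat\nu]$ are determined by the exponent vectors $\nu$ and $\hat\nu$. Hence the binomial in question becomes $\zeta^{|\nu|}[\nu]-[\nu]=(\zeta^{|\nu|}-1)[\nu]$. Because $\C[a]$ is an integral domain and $[\nu]\neq 0$, it suffices to prove $\zeta^{|\nu|}=1$; and since $\zeta^n=1$, this is equivalent to showing that $n$ divides $|\nu|$.

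Next I would unpack the equation $\nu=\hat\nu=\nu\widetilde{\bf T}$ using the definition \eqref{hat_nu}. The matrix $\widetilde{\bf T}=\mathrm{diag}[\widetilde T_n,\dots,\widetilde T_n]$ is block diagonal with $\ell$ copies of the single $n$-cycle $\widetilde T_n$, so the fixed-point equation decouples into $\ell$ independent conditions, one on each block of coordinates $(\nu_{(k-1)n+1},\dots,\nu_{kn})$ for $k=1,\dots,\ell$. The key elementary observation is that a vector is fixed by the cyclic shift $\widetilde T_n$ (which acts as $(v_1,\dots,v_n)\mapsto(v_n,v_1,\dots,v_{n-1})$) if and only if all of its entries coincide. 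Consequently each block is constant: there exist $c_k\in\N_0$ with $\nu_{(k-1)n+j}=c_k$ for all $j=1,\dots,n$ and all $k$.

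Finally I would sum the coordinates. Using the definition $|\nu|=\nu_1+\cdots+\nu_{n\ell}$, the constancy of each block gives $|\nu|=\sum_{k=1}^{\ell}\sum_{j=1}^{n}\nu_{(k-1)n+j}=n\sum_{k=1}^{\ell}c_k$, so $n\mid|\nu|$ and therefore $\zeta^{|\nu|}=(\zeta^{n})^{\sum_k c_k}=1$. Substituting back yields $\zeta^{|\nu|}[\nu]-[\hat\nu]=[\nu]-[\nu]=0$, which is exactly the assertion.

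I do not expect a genuine obstacle in this argument; the only point requiring a moment of care is the elementary fact that the fixed space of a single $n$-cycle consists precisely of the constant vectors, which is what produces the divisibility $n\mid|\nu|$. Note that the primality of $n$ plays no role here (in contrast with Theorem \ref{THMinvarM}): the statement is purely combinatorial once the reduction to $\zeta^{|\nu|}=1$ has been made, and it holds for self-conjugate $\nu$ regardless of whether $\nu$ lies in $\mcm\n$.
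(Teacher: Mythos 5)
Your argument is correct and follows essentially the same route as the paper's proof: the fixed-point equation $\nu=\nu\widetilde{\bf T}$ forces each block of $n$ coordinates to be constant, so $n\mid|\nu|$ and $\zeta^{|\nu|}=1$. The only addition is your (correct) observation that primality of $n$ is not needed here.
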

\begin{proof}
If  $\hat \nu=\nu$, then $\nu_{(k-1)n+1}=   \nu_{(k-1)n+2}=\dots   =\nu_{kn }$ for $k=1,\dots,\ell$. Therefore $|\nu|=n m $ for some $m \in \N$ and so $\zeta^{|\nu|}=1$.
\end{proof}


The following theorem gives another characterization 
of the sets of $\zeta$-reversible and  equivariant systems, which allows to compute the set looking for the kernel of a polynomial map.

\begin{thm} \label{pro412}
Let $\zeta$ be a primitive $n$-th root of unity.
System \eqref{nSYS} is $\zeta$-reversible  if and only if for every 
$\pk \in S^{(n)}$,   $k=1,2,\ldots,\ell$, and every $j=1,\dots, n$,
\begin{eqnarray}\label{T3-equiv}
\begin{aligned}
 a^{(j)}_{T^{j-1}_n(\pk)} 
 =\zeta^j 
 y_{k} {\bf t}^{ T^{j-1}_n(\pk) }
\end{aligned}    
\end{eqnarray}
for some $y_k$ and ${\bf t}=(t_1, \dots, t_n)$, where 
\be
\label{t_prod} t_1t_2\cdots t_n=1.
\ee

If \eqref{T3-equiv} holds with $\zeta=1,$ then system \eqref{nSYS} is equivariant.  
\end{thm}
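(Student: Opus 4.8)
The plan is to reduce the statement to the pointwise reformulation \eqref{cond_rev} of $\zeta$-reversibility and to connect the two descriptions through the substitution
\[
\alpha_i = t_{i+1}/t_i \quad (1 \le i \le n-1), \qquad \alpha_n = t_1/t_n .
\]
This is the natural bridge between the matrix $A$ of \eqref{Amatrix} and the vector ${\bf t}$: it telescopes to $\alpha_1\alpha_2\cdots\alpha_n = 1$, matching condition \eqref{Al_p}, and since \eqref{t_prod} forces every $t_i\neq 0$, all $\alpha_i$ are nonzero, so $A$ is a legitimate matrix of the form \eqref{Amatrix}. The cornerstone computation is the identity, valid for any exponent vector ${\bf q}=(q_1,\dots,q_n)$ read with cyclic indices $t_{n+1}=t_1$ and $q_0=q_n$,
\[
\alpha^{\bf q}=\prod_{i=1}^n\left(\frac{t_{i+1}}{t_i}\right)^{q_i}=\frac{{\bf t}^{T_n({\bf q})}}{{\bf t}^{\bf q}},
\]
which follows by writing ${\bf t}^{T_n({\bf q})}=\prod_{i} t_i^{q_{i-1}}$ and collecting the exponent of each $q_i$.

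For the implication ``\eqref{T3-equiv} $\Rightarrow$ $\zeta$-reversibility'' I would assume \eqref{T3-equiv}, define the $\alpha_i$ as above, and verify \eqref{cond_rev} directly. Substituting $a^{(i)}_{T^{i-1}_n(\pk)}=\zeta^i y_k\,{\bf t}^{T^{i-1}_n(\pk)}$ into the left-hand side of \eqref{cond_rev} and applying the cornerstone identity with ${\bf q}=T^{i-1}_n(\pk)$ collapses the factor ${\bf t}^{T^{i-1}_n(\pk)}\alpha^{T^{i-1}_n(\pk)}$ to ${\bf t}^{T^i_n(\pk)}$, so the left-hand side becomes $\zeta^{i+1}y_k\,{\bf t}^{T^i_n(\pk)}$, which is exactly $a^{(]i[_n+1)}_{T^i_n(\pk)}$. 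The only delicate point is the wrap-around at $i=n$, where $T^n_n=\mathrm{id}$ and the factor $\zeta\cdot\zeta^n$ collapses to $\zeta$ via $\zeta^n=1$, reproducing $a^{(1)}_{\pk}$.

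For the converse I would start from a $\zeta$-reversible system, that is, from $\alpha$ with $\alpha_1\cdots\alpha_n=1$ satisfying \eqref{cond_rev}, and solve the recurrence $t_{i+1}=\alpha_i t_i$ for ${\bf t}$; the cyclic closure $t_1=\alpha_n t_n$ is automatically consistent precisely because $\prod_i\alpha_i=1$, leaving $t_1$ free. I would then choose $t_1$ as an appropriate $n$-th root so that $t_1\cdots t_n=1$ (always possible over $\C$), and set $y_k:=\zeta^{-1}a^{(1)}_{\pk}\,{\bf t}^{-\pk}$, so that \eqref{T3-equiv} holds by construction for $j=1$. For $j>1$, iterating \eqref{cond_rev} gives $a^{(j)}_{T^{j-1}_n(\pk)}=\zeta^{j-1}a^{(1)}_{\pk}\,\alpha^{\sum_{m=0}^{j-2}T^m_n(\pk)}$, and the telescoping product $\prod_{m=0}^{j-2}\alpha^{T^m_n(\pk)}={\bf t}^{T^{j-1}_n(\pk)}/{\bf t}^{\pk}$ (again the cornerstone identity) turns this into $\zeta^j y_k\,{\bf t}^{T^{j-1}_n(\pk)}$, establishing \eqref{T3-equiv} for every $j$. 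The equivariance claim is the verbatim specialization $\zeta=1$, where all powers of $\zeta$ disappear and the wrap-around subtlety is vacuous.

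I expect the main difficulty to be bookkeeping rather than conceptual: keeping the cyclic shifts $T^{j-1}_n$ synchronized with the powers $\zeta^j$, and treating the cyclic boundary $i=n$ (respectively $j=n$) so that the relation $\zeta^n=1$ is invoked exactly where needed. The one genuine verification is that the normalization $t_1\cdots t_n=1$ can always be met in the converse; this rests entirely on $\prod_i\alpha_i=1$, which guarantees both the consistency of the cyclic closure of the recurrence and that rescaling $t_1$ (absorbed into the $y_k$) is a free parameter that can be tuned to achieve the normalization.
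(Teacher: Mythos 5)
Your proposal is correct, and it proves the same equivalence the paper does (between \eqref{T3-equiv} and the pointwise reversibility conditions \eqref{cond_rev}), but the mechanics are genuinely different and, I think, cleaner. The paper passes from $\alpha$ to ${\bf t}$ by the explicit substitution $t_j^n={\bf \alpha}^{T_n^{j-1}(\overline{n})}$ (see \eqref{tprod}) and then verifies the required monomial identities
${\widehat{\bf{\alpha}}}^{\frac{1}{n}( T^{j-1}_n(-\pk) +T^{j}_n(\pk))}= {\bf \alpha}^{T_n^{j-1}(\pk)}$
by a direct computation with fractional exponents, invoking \eqref{Al_p} at the end to get \eqref{t_prod}. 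You instead posit the inverse change of variables $\alpha_i=t_{i+1}/t_i$ (cyclically), which makes $\alpha_1\cdots\alpha_n=1$ automatic, and reduce everything to the single telescoping identity $\alpha^{\bf q}={\bf t}^{T_n({\bf q})}/{\bf t}^{\bf q}$; the two substitutions are mutually inverse (one checks ${\bf \alpha}^{T_n^{j-1}(\overline n)}=t_j^n$ precisely when $t_1\cdots t_n=1$), so this is the same bridge written in the opposite direction. What your version buys is the elimination of the $n$-th-root bookkeeping from the middle of the argument (a single root extraction survives, in normalizing $t_1$ so that $t_1\cdots t_n=1$, and you correctly observe that the residual rescaling of ${\bf t}$ is absorbed into the $y_k$ because cyclic permutation preserves $|\pk|$), plus a fully explicit treatment of the converse via the recurrence $t_{i+1}=\alpha_i t_i$ and the closed form $a^{(j)}_{T^{j-1}_n(\pk)}=\zeta^{j-1}a^{(1)}_{\pk}\alpha^{\sum_{m=0}^{j-2}T^m_n(\pk)}$, where the paper is terser. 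The wrap-around at $i=n$ using $\zeta^n=1$ and $T_n^n=\mathrm{id}$ is handled correctly. I see no gap.
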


\begin{proof}
It is enough to show that the system
$$ 
\zeta a^{(i)}_{T^{i-1}_n(\pk)} {\bf \alpha}^{T_n^{i-1}(\pk)} = a^{(]i[_n +1)}_{T^i_n(\pk)} \quad  \text{ where }\   \ i=1, \dots, n, \text{ and } \ k=1,\dots , \ell
$$
is equivalent to \eqref{T3-equiv}. First notice that \eqref{T3-equiv} is equivalent to the system
\begin{eqnarray} \label{equiv}
\begin{aligned}
&y_k=\zeta^{-j} a^{(j)}_{T^{j-1}_n(\pk)} {\bf t}^{ T^{j-1}_n(-\pk) } \ \text{ where }\ j=1,\ldots,n\ \text{ and }\ k=1,\ldots,\ell.
\end{aligned}    
\end{eqnarray}
 
Let $\overline{n}=(0,1,\ldots,n-1)$ and define ${\bf \alpha}^{(K_1,K_2,\ldots,K_n)}:=\alpha_1^{K_1} \alpha_2^{K_2} \cdots \alpha_n^{K_n}$ for any $n$-tuple of nonnegative integers $(K_1,K_2,\ldots,K_n)$. Consider the  substitutions given on the $n$-th powers of $t_j$'s by 
\begin{eqnarray}\label{tprod}
\begin{aligned}
&t_1^n\ =\ \alpha_2\,\alpha_3^2\,\alpha_4^3\cdots \alpha_n^{n-1}\ =\ {\bf \alpha}^{\overline{n}}\\
&t_2^n\ =\ \alpha_1^{n-1}\,\alpha_3\,\alpha_4^2\cdots \alpha_n^{n-2}\ =\  {\bf \alpha}^{T_n\left(\overline{n}\right)}\\
&t_3^n\ =\ \alpha_1^{n-2}\,\alpha_2^{n-1}\,\alpha_4\cdots \alpha_n^{n-3}\ =\ {\bf \alpha}^{T_n^2\left(\overline{n}\right)}\\
&\ \ \ \ \ \vdots\\
&t_n^n\ =\ \alpha_1\,\alpha_2^2\,\alpha_3^3\cdots \alpha_{n-1}^{n-1}\ = \ {\bf \alpha}^{T_n^{n-1}\left(\overline{n}\right)}
\end{aligned}    
\end{eqnarray}
or,  equivalently,
$$
t_j^n={\bf \alpha}^{T_n^{j-1}\left(\overline{n}\right)} \ \text{ for all }\ j=1,2,\ldots,n.
$$

Using these substitutions and  the notation 
$$
\widehat{{\bf{\alpha}}}=\left({\bf \alpha}^{\overline{n}}
,{\bf \alpha}^{T_n(\overline{n})},\ldots,{\bf \alpha}^{T_n^{n-1}(\overline{n})}\right)
$$
system \eqref{equiv} is transformed  to
\begin{eqnarray*}
\begin{aligned}
&y_k=\zeta^{-j} a^{(j)}_{T^{j-1}_n(\pk)} {\widehat{\bf{\alpha}}}^{\frac{1}{n} T^{j-1}_n(-\pk) }\ \text{ where }\ j=1,\ldots,n\ \text{ and }\ k=1,\ldots,\ell.
\end{aligned}    
\end{eqnarray*}

Notice that for arbitrary consecutive $j,j+1$, where   $j=1,2,\ldots,n-1$,  and an  arbitrary $k$, we have
$$\zeta 
a^{(j)}_{T^{j-1}_n(\pk)} {\widehat{\bf{\alpha}}}^{\frac{1}{n} T^{j-1}_n(-\pk) }=a^{(j+1)}_{T^{j}_n(\pk)} {\widehat{\bf{\alpha}}}^{\frac{1}{n} T^{j}_n(-\pk) }
$$
or, equivalently,
$$\zeta 
a^{(j)}_{T^{j-1}_n(\pk)} {\widehat{\bf{\alpha}}}^{\frac{1}{n}\left( T^{j-1}_n(-\pk) +T^{j}_n(\pk)\right)}=a^{(j+1)}_{T^{j}_n(\pk)}\ \text{ for all } j=1,\ldots,n-1, 
$$
and
$$\zeta 
a^{(n)}_{T^{n-1}_n(\pk)} {\widehat{\bf{\alpha}}}^{\frac{1}{n}\left( T^{n-1}_n(-\pk) +\pk \right)}=a^{(1)}_{\pk}.
$$

Taking into account equalities \eqref{cond_rev}, which present the condition of $\zeta$-reversibility  of system \eqref{nSYS}, it remains to prove 
$$
{\widehat{\bf{\alpha}}}^{\frac{1}{n}\left( T^{j-1}_n(-\pk) +T^{j}_n(\pk)\right)}= {\bf \alpha}^{T_n^{j-1}(\pk)}\ \text{ for all }\ j=1,\ldots, n-1
$$
and 
$${\widehat{\bf{\alpha}}}^{\frac{1}{n}\left( T^{n-1}_n(-\pk) +\pk\right)}= {\bf \alpha}^{T_n^{n-1}(\pk)}.
$$

To this end,  observe that if $T^{j-1}_n(\pk)=\left( q_1,q_2,\ldots,q_n\right)$, then $$T^{j}_n(\pk)=\left(q_n,q_1,\ldots,q_{n-1}\right)$$ and, therefore,  $$T^{j-1}_n(-\pk) +T^{j}_n(\pk)=\left(q_n-q_1,q_1-q_2,\ldots,q_{n-1}-q_n\right).$$
In view of \eqref{Al_p} the  calculations give 
\begin{eqnarray*}
\begin{aligned}
& {\widehat{\bf{\alpha}}}^{\frac{1}{n}\left( T^{j-1}_n(-\pk) +T^{j}_n(\pk)\right)}
=\left(\left({\bf \alpha}^{\overline{n}}\right)^{q_n-q_1}
\left({\bf \alpha}^{T_n(\overline{n})}\right)^{q_1-q_2}\cdots\left({\bf \alpha}^{T_n^{n-1}(\overline{n})}\right)^{q_{n-1}-q_n}\right)^{\frac{1}{n}}\\
& =\left( \left(\alpha_2\alpha_3^2\cdots \alpha_n^{n-1}\right)^{q_n-q_1}\left(\alpha_1^{n-1}\alpha_3\cdots \alpha_n^{n-2}\right)^{q_1-q_2} \cdots\left(\alpha_1\alpha_2^2\cdots \alpha_{n-1}^{n-1}\right)^{q_{n-1}-q_n}\right)^{\frac{1}{n}}\\
& =\left( \alpha_1^{nq_1}\,\cdots\, \alpha_n^{nq_n}(\alpha_1\,\cdots\, \alpha_n)^{-(q_1+\ldots+q_n)}\right)^{\frac{1}{n}}\\
& =\alpha_1^{q_1}\,\cdots\, \alpha_n^{q_n}\\
& ={\bf \alpha}^{T_n^{j-1}(\pk)}.
\end{aligned}    
\end{eqnarray*}
Similarly, we see that  ${\widehat{\bf{\alpha}}}^{\frac{1}{n}\left( T^{n-1}_n(-\pk) +\pk\right)}= {\bf \alpha}^{T_n^{n-1}(\pk)}$ holds.

Since 
$$\overline{n} + T_n(\overline{n})+ \ldots+T_n^{n-1}(\overline{n})= 
\left(\frac{(n-1) n}{2}, \dots, \frac{(n-1) n}{2}\right),
$$  
taking into account \eqref{Al_p} and \eqref{tprod}
we see that \eqref{t_prod} takes place.
\end{proof}


Let $\mathfrak{A}\n$ be 
the $(n-1)\times n\ell$ dimensional matrix 
\be \label{An}
\mathfrak{A}\n=\begin{bmatrix} 
P^{(1)} \ P^{(2)}\  \cdots \ P^{(\ell)} \\
O^{(1)} \ O^{(2)}\  \cdots \ O^{(\ell)}
\end{bmatrix}, 
\ee
 where every $O^{(k)}$ represents an $\ell \times n$ matrix whose entries are zero except the $k^{th}$-row entries, which are all one,
 and $P^{(k)}$ is defined by \eqref{Pmat}.
 

Let
  \be \label{Lat_L}
  \msl = \{\beta  \in \Z^{n\ell} \, : \, \mathfrak{A}\n  \beta  = 0 \}
  \ee 
  be the  lattice in $\Z^{n\ell}$ and $I_\msl$ be  its lattice ideal. 
The  toric ideal  defined by  matrix  \eqref{An} is denoted by 
$  I_{\mathfrak{A}\n}$.

Let ${\bf s}=(s_1,\dots, s_{n-1}, (s_1 s_2\cdots s_{n-1})^{-1})$.
By  \eqref{t_prod}, 
equations \eqref{T3-equiv}
are  equivalent to 
\be \label{T3-equiv_red}
a^{(j)}_{T^{j-1}_n(\pk)} 
 =
\zeta^j y_{k} {\bf s}^{ T^{j-1}_n(\pk) }, \quad   k=1,\dots \ell, \ \ j=1,\dots, n.
\ee

For $k=1,\dots, \ell$, let 
\be \label{PmatP}
\begin{aligned}
  \widehat    P^{(k)}=\begin{bmatrix} (p_1^{(k)}- p_n^{(k)}) & (p_n^{(k)}- p_{n-1}^{(k)}) & \ldots & (p_2^{(k)}- p_1^{(k)})\\
   (p_2^{(k)}- p_n^{(k)}) & (p_1^{(k)}- p_{n-1}^{(k)}) & \ldots & (p_3^{(k)}- p_{1}^{(k)})\\
     \vdots &   \vdots & \ddots & \vdots \\
   (p_{n-1}^{(k)}- p_n^{(k)}) & (p_{n-2}^{(k)}-p_{n-1}^{(k)}) & \ldots & (p_n^{(k)}-p_1^{(k)}) \end{bmatrix}.
 \end{aligned}
 \ee
Considering equations \eqref{T3-equiv_red} as a polynomial mapping in the ring of Laurent polynomials, 
we see that the corresponding toric ideal  is defined by the matrix 
\be \label{Anh}
\widehat {\mathfrak{A}}\n=\begin{bmatrix} 
\widehat {P}^{(1)} \ \widehat{P}^{(2)}\  \cdots \ \widehat{P}^{(\ell)} \\
O^{(1)} \ O^{(2)}\  \cdots \ O^{(\ell)}
\end{bmatrix}. 
 \ee
 
It is not difficult to verify that the lattice 
$$
 \{\beta  \in \Z^{n\ell} \, : \, \widehat{\mathfrak{A}}\n \beta  = 0 \}$$
is the same as lattice \eqref{Lat_L}. 

\begin{pro} \label{EI}
  $\mathcal{I}_E = I_{\mathfrak{A}\n} =  I_{ \widehat{\mathfrak{A}}\n}   .$
\end{pro}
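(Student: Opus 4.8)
The plan is to prove the two equalities separately: first $\mathcal{I}_E = I_{\widehat{\mathfrak{A}}\n}$ by comparing varieties and then upgrading to ideals via primality, and then $I_{\mathfrak{A}\n}=I_{\widehat{\mathfrak{A}}\n}$ by identifying both toric ideals with a common lattice ideal. The organizing principle is that $\C$ is algebraically closed, so any prime ideal $P$ equals $I({\bf V}(P))$; since $\mathcal{I}_E$ is prime by part 3 of Proposition~\ref{pro48} and every toric ideal is prime, comparing zero sets will suffice to conclude equality of the ideals themselves.

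For $\mathcal{I}_E = I_{\widehat{\mathfrak{A}}\n}$, I would first observe that, after the reduction of \eqref{T3-equiv} to \eqref{T3-equiv_red} made possible by \eqref{t_prod}, the matrix $\widehat{\mathfrak{A}}\n$ of \eqref{Anh} is precisely the exponent matrix of the monomial parametrization $a^{(j)}_{T^{j-1}_n(\pk)}\mapsto y_k\,{\bf s}^{T^{j-1}_n(\pk)}$ taken with $\zeta=1$. Indeed, writing $T^{j-1}_n(\pk)=(q_1,\dots,q_n)$ and substituting $s_n=(s_1\cdots s_{n-1})^{-1}$, the ${\bf s}$-exponent of the image monomial is $(q_1-q_n,\dots,q_{n-1}-q_n)$, which matches the $j$-th column of the block $\widehat{P}^{(k)}$ in \eqref{PmatP}, while the blocks $O^{(k)}$ record the exponent of $y_k$. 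Hence $I_{\widehat{\mathfrak{A}}\n}$ is the kernel of this monomial map and ${\bf V}(I_{\widehat{\mathfrak{A}}\n})$ is the Zariski closure of its image. By Theorem~\ref{pro412} specialized to $\zeta=1$, that image is exactly the set of equivariant systems in the parameter space, and by part 2 of Proposition~\ref{pro48} its Zariski closure is ${\bf V}(\mathcal{I}_E)$. Thus ${\bf V}(\mathcal{I}_E)={\bf V}(I_{\widehat{\mathfrak{A}}\n})$, and since both ideals are prime, $\mathcal{I}_E = I_{\widehat{\mathfrak{A}}\n}$.

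For $I_{\mathfrak{A}\n}=I_{\widehat{\mathfrak{A}}\n}$, I would invoke the elementary fact (cf.\ the argument in the proof of Proposition~\ref{pro35}, and \cite{HHO}) that the toric ideal of an integer matrix $\mathfrak{A}$ coincides with the lattice ideal of its integer kernel $\{\beta:\mathfrak{A}\beta=0\}$: each binomial generator $\xb^{u}-\xb^{v}$ of the toric ideal factors as a monomial times $f_{u-v}$ with $u-v$ in the kernel, and conversely $f_m\in I_\mathfrak{A}$ whenever $m$ lies in the kernel. Applying this to both $\mathfrak{A}\n$ and $\widehat{\mathfrak{A}}\n$, and using the remark preceding the proposition that the kernel $\{\beta:\widehat{\mathfrak{A}}\n\beta=0\}$ equals the lattice $\msl$ of \eqref{Lat_L}, I obtain $I_{\mathfrak{A}\n}=I_{\msl}=I_{\widehat{\mathfrak{A}}\n}$. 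Combining this with the previous paragraph yields the full chain $\mathcal{I}_E = I_{\mathfrak{A}\n}=I_{\widehat{\mathfrak{A}}\n}$.

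The routine-but-delicate step I would write out most carefully is the column-by-column identification of $\widehat{\mathfrak{A}}\n$ with the parametrization matrix, since the bookkeeping of the cyclic shifts $T^{j-1}_n$ on $\pk$ together with the elimination of $s_n$ via $t_1\cdots t_n=1$ must be matched exactly against the pattern in \eqref{PmatP}. The genuine conceptual obstacle, however, is the passage from equality of varieties to equality of ideals: this rests entirely on primality, so the argument must cite part 3 of Proposition~\ref{pro48} together with the primeness of toric ideals rather than merely comparing zero sets. This is the point at which the earlier hypotheses do their work — Theorem~\ref{pro412} (which uses that $n$ is prime) guarantees that the equivariant set is exactly the image of the monomial map, and torus-density guarantees that the closure of that image is the full toric variety — so that the two prime ideals are seen to agree on the nose.
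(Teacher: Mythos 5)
Your proposal is correct and follows essentially the same route as the paper: identify $I_{\mathfrak{A}\n}$ and $I_{\widehat{\mathfrak{A}}\n}$ through their common integer kernel (lattice), and then deduce $\mathcal{I}_E$ equals the toric ideal by showing their varieties coincide (via Theorem \ref{pro412} and Proposition \ref{pro48}) and invoking primality of both ideals. The only cosmetic difference is that you compare $\mathcal{I}_E$ with $I_{\widehat{\mathfrak{A}}\n}$ first while the paper compares it with $I_{\mathfrak{A}\n}$; the substance is identical.
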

{
\begin{proof}  
Since  the lattices defined by matrices \eqref{An} and \eqref{Anh} are the same, the ideals  $I_{\mathfrak{A}\n} $ and $ I_{ \widehat{\mathfrak{A}}\n}$ are the same.

The ideal 
$I_{\mathfrak{A}\n}$ is a toric ideal, hence  it is prime. The ideal $\mathcal{I}_{E}$ is prime by Proposition \ref{pro48}. Furthermore, by Proposition \ref{pro48} and Theorem \ref{pro412},
the varieties of $I_{\mathfrak{A}\n}$ and  $\mathcal{I}_{E}$ coincide. Therefore the two prime ideals coincide as well.
\end{proof}
}

For $\beta \in \Z^{n\ell}$, 
let $\beta^+$
and $\beta^-$ be $n\ell$-tuples  in  $\N_0^{n\ell}$ 
with entries 
\be \label{beta_def}
\beta_i^+=\begin{cases}  \beta_i {\rm \ if \ \beta_i\ge 0 }  \\   0 \  {\rm \ if \ \beta_i < 0 } \end{cases}, \qquad  \beta_i^-=\begin{cases}  \phantom{-}  0  {\rm \ if \ \beta_i> 0 }  \\   -\beta_i  {\rm \ if \ \beta_i \le  0 }  \end{cases}.
\ee

From  Proposition \ref{EI} and Proposition 3.16 of \cite{HHO}  we see that
  $$
  \mathcal{I}_E = I_{\mathfrak{A}\n}   = \left\langle [\beta^+] -[\beta ^-] \, : \, \mathfrak{A}\n \beta  = 0 \right\rangle = I_\msl$$ 
and, therefore,  $\mathcal{I}_E$ is a lattice ideal.


Denote by $ \mathsf{a} $
the product of all parameters of system \eqref{nSYS},  that is, $${\mathsf a}= a^{(1)}_{{\bf p}^{(1)}}\cdots a^{(n)}_{T_n^{n-1}({\bf p}^{(\ell)})} $$ 
and recall that for an ideal $I\subseteq \C[a]$ 
and the multiplicative set $\{1, \mathsf{a}, \mathsf{a}^2, \dots \} $, 
  $ I: {\mathsf  a} ^\infty$ denotes  the saturation of $I$ by $\mathsf{a}$. 

The following theorem establishes the connection between the ideal  $\mathcal{I}_E$ defining the set of systems 
which are equivariant with respect to 
cyclic permutation group $T_n$ and the ideal  $ I^{(n)}_S$ defining the set of invariants of the Lie group,  which generator is  the diagonalization of $T_n$. 
\begin{thm}\label{con_satur}
   $ I^{(n)}_S: {\mathsf  a} ^\infty = \mathcal{I}_E$. 
\end{thm}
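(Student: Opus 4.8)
The plan is to reduce the statement to an identity of two lattices in $\Z^{n\ell}$. Throughout, for $\beta\in\Z^{n\ell}$ write $f_\beta:=[\beta^+]-[\beta^-]$ with $\beta^\pm$ as in \eqref{beta_def}, and set $J:=I\n_S:\mathsf a^\infty$. By Proposition \ref{EI} and the discussion following it, $\mathcal I_E=I_{\mathfrak A\n}=I_\msl$ is a lattice ideal which, being toric, is prime and contains no monomial; in particular $\mathsf a\notin\mathcal I_E$, so $\mathcal I_E:\mathsf a^\infty=\mathcal I_E$. Since the saturation of a binomial ideal is binomial and $J:x_i=J$ for every variable $x_i$ (because $\mathsf a$ is the product of all the variables), $J$ is itself a lattice ideal by the theory of \cite{HHO}; hence $J=I_\Gamma$ for the lattice $\Gamma:=\{\beta\in\Z^{n\ell}:f_\beta\in J\}$. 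Let $\msl_S\subseteq\Z^{n\ell}$ be the lattice generated by the vectors $\nu-\hat\nu$, $\nu\in\mcm\n$ (recall $\hat\nu=\nu\widetilde{\bf T}$). The whole argument consists in proving the sandwich $\msl_S\subseteq\Gamma\subseteq\msl$ together with the identity $\msl_S=\msl$, which forces $\Gamma=\msl$ and therefore $J=I_\msl=\mathcal I_E$.

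First I would establish the sandwich. For $\Gamma\subseteq\msl$: each generator $[\nu]-[\hat\nu]$ of $I\n_S$ lies in $\mathcal I_E$, because $\nu-\hat\nu\in\msl$. Indeed $\nu$ and $\hat\nu$ lie in $\mcm\n$ and share the same constant $L$-value (by \eqref{L_cyc} the vector $L(\hat\nu)$ is a cyclic permutation of $L(\nu)$), so $L(\nu-\hat\nu)=0$, while $\widetilde{\bf T}$ permutes coordinates inside each block of length $n$ and hence leaves the block sums unchanged. Thus $I\n_S\subseteq\mathcal I_E$, and since $\mathcal I_E:\mathsf a^\infty=\mathcal I_E$ we get $J\subseteq\mathcal I_E=I_\msl$, i.e. $\Gamma\subseteq\msl$. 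For $\msl_S\subseteq\Gamma$: putting $g_i=\min(\nu_i,\hat\nu_i)$ gives $\nu=g+(\nu-\hat\nu)^+$ and $\hat\nu=g+(\nu-\hat\nu)^-$, hence $[\nu]-[\hat\nu]=[g]\,f_{\nu-\hat\nu}$; as $[g]$ divides a power of $\mathsf a$ and $[\nu]-[\hat\nu]\in I\n_S\subseteq J$, saturation yields $f_{\nu-\hat\nu}\in J$, i.e. $\nu-\hat\nu\in\Gamma$. Since $\Gamma$ is a lattice containing all the generators of $\msl_S$, we obtain $\msl_S\subseteq\Gamma$.

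The hard part will be the lattice identity $\msl_S=\msl$, for which I would use two descriptions. On one hand, by Proposition \ref{pro_M} and the computation in Theorem \ref{THMinvarM}, $\msl=\{\beta\in\Z^{n\ell}:L(\beta)=0\text{ and every block of }\beta\text{ sums to }0\}$. On the other hand, set $\Lambda:=\Z\,\mcm\n$. The all-ones vector ${\bf 1}$ lies in $\mcm\n$ (one checks $L({\bf 1})=(P,\dots,P)$ with $P=\sum_{k}\sum_j p_j^{(k)}\ge 1$), so adding large multiples of ${\bf 1}$ shows $\Lambda=\{\mu\in\Z^{n\ell}:L(\mu)\text{ is a constant vector}\}=\ker_\Z\mathfrak M\n$; moreover, using that $\mcm\n$ is ${\bf T}$-invariant, $\msl_S=({\rm id}-{\bf T})\Lambda$. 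Now ${\bf T}$ acts as a cyclic permutation of order $n$ on each block, so the image of ${\rm id}-{\bf T}$ on $\Z^{n\ell}$ is exactly the block-sum-zero lattice (the augmentation ideal of the regular representation of the cyclic group). Given $\beta\in\msl$, its block sums vanish, so $\beta=\mu_0({\rm id}-{\bf T})$ for some $\mu_0\in\Z^{n\ell}$; applying $L$ and using $L(\mu_0{\bf T})=T_n^{-1}L(\mu_0)$ (a restatement of \eqref{L_cyc}), the hypothesis $L(\beta)=0$ becomes $L(\mu_0)=T_n^{-1}L(\mu_0)$, so $L(\mu_0)$ is $T_n$-invariant, hence a constant vector. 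Therefore $\mu_0\in\Lambda$ and $\beta\in({\rm id}-{\bf T})\Lambda=\msl_S$, giving $\msl\subseteq\msl_S$; the reverse inclusion was already noted, so $\msl_S=\msl$.

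Combining the sandwich with $\msl_S=\msl$ yields $\Gamma=\msl$, whence $J=I\n_S:\mathsf a^\infty=I_\Gamma=I_\msl=\mathcal I_E$, as claimed. The main obstacle is the third paragraph: one must verify both that $\msl_S$ is precisely the image $({\rm id}-{\bf T})\Lambda$ (which rests on $\Z\,\mcm\n=\ker_\Z\mathfrak M\n$ via ${\bf 1}\in\mcm\n$) and that a preimage $\mu_0$ of an arbitrary $\beta\in\msl$ automatically lands in $\Lambda$; both depend on the compatibility between the cyclic permutation ${\bf T}$ and the map $L$ encoded in \eqref{L_cyc}, and it is exactly here that the saturation by $\mathsf a$ is needed to pass from the non-coprime Sibirsky binomials $[\nu]-[\hat\nu]=[g]f_{\nu-\hat\nu}$ to the reduced lattice binomials $f_\beta$.
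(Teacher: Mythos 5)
Your proof is correct, but it takes a genuinely different route from the paper. The paper does not prove this theorem directly: it deduces it as the special case $\zeta=1$ of Theorem \ref{conj_2} ($I\n_R:\mathsf{a}^\infty=I_\zeta$), whose proof runs through the machinery of $\zeta$-toric ideals --- the weight function $w$, Proposition \ref{pro426}, and a group-ring argument (Theorem \ref{thm427}) showing $\zt:\mathsf{a}^\infty=\zt$ --- followed by a two-sided inclusion in which, for a given lattice binomial $w(\gamma)[\tau]-w(\tau)[\gamma]$, an explicit preimage $\mu=\tau+\omega\in\mcm\n$ is constructed by solving $\omega-\hat\omega=\hat\tau-\gamma$ and then checking $\mu\in\mcm\n$ via telescoping the differences $L^i-L^{i+1}$. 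Your argument strips out the weight-function apparatus (which is only needed when $\zeta\neq 1$) and recasts everything as the single lattice identity $\msl_S=\msl$, proved via $\msl_S=({\rm id}-{\bf T})\Lambda$, the identification of ${\rm im}({\rm id}-{\bf T})$ with the block-sum-zero sublattice, and the $T_n$-equivariance of $L$. The two proofs share their combinatorial core --- your step ``$\beta=\mu_0({\rm id}-{\bf T})$ with $L(\mu_0)$ forced to be constant'' is the lattice-level shadow of the paper's construction of $\omega$ and its $L^i-L^{i+1}$ computation --- but yours is cleaner and more self-contained for this statement (in particular it sidesteps the paper's problematic invocation of $({\rm id}-\widetilde{\bf T})^{-1}$, which does not exist as a matrix inverse, by observing that the relevant vector lies in the image because its block sums vanish), while the paper's buys the general $\zeta$-reversible case in the same stroke. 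One step you should tighten: the assertion that $J=I\n_S:\mathsf{a}^\infty$ is a lattice ideal ``by the theory of \cite{HHO}'' is most safely justified by the direct identification $I\n_S:\mathsf{a}^\infty=I_{\msl_S}$ (factor each generator as $[\nu]-[\hat\nu]=[g]\,f_{\nu-\hat\nu}$ and use that the saturation of the ideal generated by the $f_{\nu-\hat\nu}$ is the lattice ideal of the lattice they generate), since a priori the Eisenbud--Sturmfels saturation of a binomial ideal could introduce non-unit coefficients; with that identification in hand your sandwich collapses to the single equality $\msl_S=\msl$, which you prove correctly.
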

{
The proof of this theorem is   a special case of the proof of Theorem \ref{conj_2} related to $\zeta$-reversible systems, which is presented below.


Recall that 
by Proposition \ref{pro_zeta_rev}
if system \eqref{nSYS} is $\zeta$-reversible, then for any $\nu\in {\mathcal M}\n$ it holds 
$$
\zeta^{\left|\nu\right|} [\nu]-[\hat{\nu}]=0.
$$
Denote 
$$
I\n_R=\la   \zeta^{\left|\nu\right|} [\nu]-[\hat{\nu}]\ :   \nu\in {\mathcal M}\n \ra.  
$$

The following statement establishes a relation of 
$ I\n_R$ and the ideal $I_\zeta$ defining the set of $\zeta$-reversible systems in family \eqref{nSYS}.
\begin{thm}\label{conj_2}
$ I\n_R : \mathsf{a}^\infty= I_\zeta. $       
\end{thm}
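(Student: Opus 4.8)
The plan is to prove the two inclusions separately and, for the hard direction, to reduce the twisted statement to the untwisted equivariant one by a diagonal rescaling automorphism. The inclusion $I\n_R:\mathsf{a}^\infty\subseteq I_\zeta$ is the easy one: each generator $\zeta^{|\nu|}[\nu]-[\hat{\nu}]$ of $I\n_R$ vanishes on every $\zeta$-reversible system by Proposition \ref{pro_zeta_rev}, hence on the Zariski closure of the set of such systems, which by Proposition \ref{pro48} is ${\bf V}(I_\zeta)$; since $I_\zeta$ is prime (Proposition \ref{pro48}), hence radical, this gives $I\n_R\subseteq I_\zeta$. Moreover $\mathsf{a}\notin I_\zeta$, because under the map \eqref{T3-equiv} the monomial $\mathsf{a}$ is sent to a nonzero Laurent monomial in the $y_k$ and $t_j$ and so does not lie in the kernel $I_\zeta$. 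As $I_\zeta$ is prime, $I_\zeta:\mathsf{a}^\infty=I_\zeta$, whence $I\n_R:\mathsf{a}^\infty\subseteq I_\zeta$.

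For the reverse inclusion I would introduce the diagonal $\C$-algebra automorphism $\Phi$ of $\C[a]$ defined on the variables by $\Phi\big(a^{(j)}_{T^{j-1}_n(\pk)}\big)=\zeta^{-j}a^{(j)}_{T^{j-1}_n(\pk)}$. Setting $w(\nu)=\sum_{k=1}^{\ell}\sum_{m=0}^{n-1}(m+1)\,\nu_{(k-1)n+m+1}$, one has $\Phi([\nu])=\zeta^{-w(\nu)}[\nu]$, and a direct computation with the block cyclic shift $\hat{\nu}=\nu\widetilde{\bf T}$ yields $w(\hat{\nu})-w(\nu)=|\nu|-n\sum_{k=1}^{\ell}\nu_{kn}\equiv|\nu|\pmod n$. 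Hence $\Phi([\nu]-[\hat{\nu}])$ is a nonzero scalar multiple of $\zeta^{|\nu|}[\nu]-[\hat{\nu}]$, so $\Phi(I\n_S)=I\n_R$. Comparing the map \eqref{T3-equiv} with its $\zeta=1$ specialization (Theorem \ref{pro412}) shows that the $\zeta$-map equals the equivariant map composed with $\Phi^{-1}$, hence $\Phi(\mathcal I_E)=I_\zeta$; and $\Phi(\mathsf{a})$ is a nonzero scalar multiple of $\mathsf{a}$, so $\Phi$ commutes with saturation by $\mathsf{a}$. It therefore suffices to establish the untwisted identity $I\n_S:\mathsf{a}^\infty=\mathcal I_E$ and to transport it by $\Phi$; applying $\Phi$ to both sides gives $I\n_R:\mathsf{a}^\infty=\Phi(\mathcal I_E)=I_\zeta$, and the untwisted identity is exactly Theorem \ref{con_satur}, which is thereby obtained as the special case $\zeta=1$.

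To prove $I\n_S:\mathsf{a}^\infty=\mathcal I_E$, I would use that $\mathcal I_E=I_\msl$ is the lattice ideal of the saturated lattice $\msl=\{\beta\in\Z^{n\ell}:\mathfrak A\n\beta=0\}$ (the discussion following Proposition \ref{EI}). The exponent differences $\nu-\hat{\nu}$ lie in $\msl$, so $I\n_S\subseteq\mathcal I_E$ and, as before, $I\n_S:\mathsf{a}^\infty\subseteq\mathcal I_E$. Let $\msl'$ be the subgroup of $\msl$ generated by $\{\nu-\hat{\nu}:\nu\in\mcm\n\}$; telescoping the generators $[\nu]-[\hat{\nu}]$ (clearing intermediate negative exponents by a power of $\mathsf{a}$) shows $I_{\msl'}\subseteq I\n_S:\mathsf{a}^\infty$, so $I\n_S$ and $I_{\msl'}$ have the same saturation at $\mathsf{a}$. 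Now $\msl'$ is the image of the group generated by $\mcm\n$ under right multiplication by $E_{n\ell}-\widetilde{\bf T}$, and that group equals $\ker_\Z\mathfrak M\n$ since $\mcm\n$ contains the all-ones interior point. A dimension count shows that $E_{n\ell}-\widetilde{\bf T}$ maps $\ker_\Q\mathfrak M\n$ onto $\msl\otimes\Q$: its kernel is the space of block-constant vectors, and the block-sum map $\ker_\Q\mathfrak M\n\to\Q^\ell$ is surjective. Hence $\msl'$ has finite index in $\msl$, and since $\msl$ is saturated the lattice-ideal saturation theorem of \cite{HHO} gives $I\n_S:\mathsf{a}^\infty=I_{\msl'}:\mathsf{a}^\infty=I_\msl=\mathcal I_E$, completing the reverse inclusion.

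The step I expect to be the main obstacle is precisely this finite-index phenomenon. The binomials $[\nu]-[\hat{\nu}]$ do \emph{not} generate all of $\msl$: the cokernel of $E_n-\widetilde T_n$ on the zero-sum lattice has order $n$, so $\msl'$ is a proper finite-index sublattice of $\msl$. The reverse inclusion therefore cannot be obtained by telescoping the generating binomials alone; one genuinely needs the saturation by $\mathsf{a}$ to promote $\msl'$ to its saturation $\msl$. This is the point where the generalization of toric ideals does its work, and where the standing hypotheses ($n$ prime, $\zeta^n=1$), inherited through the characterizations of $\mcm\n$, $\msl$ and $I_\zeta$, enter the argument.
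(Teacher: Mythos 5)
Your first two paragraphs are correct and take a genuinely different route from the paper. The easy inclusion via Proposition \ref{pro_zeta_rev}, primality of $I_\zeta$ and the Nullstellensatz is fine, and the diagonal automorphism $\Phi\bigl(a^{(j)}_{T^{j-1}_n(\pk)}\bigr)=\zeta^{-j}a^{(j)}_{T^{j-1}_n(\pk)}$ --- which carries $I\n_S$ to $I\n_R$ and $\mathcal I_E$ to $I_\zeta$ and commutes with saturation by $\mathsf a$ --- is an elegant substitute for the paper's machinery of $\zeta$-toric ideals and the weight function $w$. The paper proves the twisted statement directly and obtains Theorem \ref{con_satur} as the special case $\zeta=1$; your reduction runs in the opposite direction, which is legitimate provided you actually prove the untwisted identity $I\n_S:\mathsf a^\infty=\mathcal I_E$.

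That is where the gap is. Having correctly reduced to $I\n_S:\mathsf a^\infty=I_{\msl'}:\mathsf a^\infty$ with $\msl'$ the lattice generated by $\{\nu-\hat\nu:\nu\in\mcm\n\}$, you assert that $\msl'$ is a proper sublattice of $\msl$ of index $n$ and that saturation by $\mathsf a$ ``promotes'' $I_{\msl'}$ to $I_{\msl}$. Both claims are false. A lattice ideal is already saturated with respect to the product of all variables, so $I_{\msl'}:\mathsf a^\infty=I_{\msl'}$; saturation never enlarges a lattice ideal to the lattice ideal of the saturation of its lattice (compare $\langle x^2-y^2\rangle$ and $\langle x-y\rangle$ in $\C[x,y]$: both are saturated with respect to $xy$ and they are distinct). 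If $\msl'$ really had index $n>1$ in $\msl$, then $I\n_S:\mathsf a^\infty=I_{\msl'}$ would be a proper intersection of $n$ twisted lattice ideals and the theorem would simply be false. What saves the theorem --- and what constitutes the actual content of the paper's proof --- is that $\msl'=\msl$: for every $\beta\in\msl$, writing $\tau=\beta^+$ and $\gamma=\beta^-$, the vector $\hat\tau-\gamma$ has zero block sums, and since each block $E_n-\widetilde T_n$ maps $\Z^n$ onto the full zero-sum sublattice one can solve $\omega-\hat\omega=\hat\tau-\gamma$ with $\omega\in\N_0^{n\ell}$; then $\mu=\tau+\omega$ satisfies $\hat\mu=\gamma+\omega$, $\mu-\hat\mu=\beta$, and $\mu\in\mcm\n$ because the relations $(L^{i}-L^{i+1})(\mu)=(L^{i}-L^{i+1})(\hat\mu)=(L^{i-1}-L^{i})(\mu)$ force $n\,(L^{i}-L^{i+1})(\mu)=0$. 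So every element of $\msl$ is a single difference $\mu-\hat\mu$ with $\mu\in\mcm\n$. Your index-$n$ computation restricts $E_n-\widetilde T_n$ to the zero-sum lattice, but the relevant domain is $\ker_\Z\mathfrak M\n$, which is not contained in the block-zero-sum lattice; that is why the obstruction you anticipate does not occur. Without the identity $\msl'=\msl$, your argument establishes only $I\n_R:\mathsf a^\infty=\Phi(I_{\msl'})$, not the theorem.
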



In the proof of Theorem \ref{conj_2} we face a particular challenge.
Generally, and in this paper, a binomial ideal is defined as an ideal generated by binomials of the form $\xb^\alpha - \xb^\beta$.
However, some literature, such as \cite{ES}, defines the following general expression 
\be\label{xbin}
\xb^\alpha - u\xb^\beta, 
\ee
where $u$ is a constant from the base field, as binomials. This distinction is crucial because ideals generated by binomials of the form \eqref{xbin} can include monomials, highlighting a fundamental difference between the theories discussed in \cite{ES} and \cite{HHO}.
  
To prove Theorem  \ref{conj_2} we need to work with ideals generated  by 
binomials $[\nu] - \zeta^m   [\mu]$, where $m\in \N_0$ and  $\zeta$ is a primitive $n$-th root of unity. Next, we introduce a generalization of toric ideals. 

Define the $\Q(\zeta)$-algebra homomorphism $\pi$ by 
\be \label{pi}
\begin{aligned}
\pi : 
 a^{(j)}_{T^{j-1}_n(\pk)} 
 \mapsto  \zeta^j 
 y_{k} {\bf t}^{ T^{j-1}_n(\pk) }  \quad \text{ for all } \ k=1,2,\ldots,\ell \ \text{ and } \ j=1,\dots, n. 
\end{aligned}    
\ee
\begin{defin}
 The kernel of $\pi$ is called $\zeta$-toric ideal and denoted by    $I^{(\zeta)}_{\mathfrak{A}^{(n)}}.  $
\end{defin}

Clearly, if we set $\zeta=1$  in \eqref{pi}, then the  corresponding 
ideal is the usual toric ideal defined by matrix \eqref{An}.

\begin{pro}
$
 \mathcal{I}_\zeta = I^{(\zeta)}_{\mathfrak{A}\n}.
$
\end{pro}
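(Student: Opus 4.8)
The plan is to prove the equality by the same primality-plus-variety argument used for Theorem~\ref{th_2dim_rev}. I would show that $\mathcal{I}_\zeta$ and $I^{(\zeta)}_{\mathfrak{A}^{(n)}}$ are both prime ideals of $\C[a]$ cutting out the same affine variety; since over the algebraically closed field $\C$ a prime ideal coincides with the vanishing ideal of its zero set, this yields
\[
\mathcal{I}_\zeta=\mathbf{I}({\bf V}(\mathcal{I}_\zeta))=\mathbf{I}({\bf V}(I^{(\zeta)}_{\mathfrak{A}^{(n)}}))=I^{(\zeta)}_{\mathfrak{A}^{(n)}},
\]
where $\mathbf{I}(\cdot)$ denotes the vanishing ideal (and we recall $\mathcal{I}_\zeta=I_\zeta=I^{(\zeta)}\cap\C[a]$ from \eqref{Izeta}).

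For primality, the ideal $\mathcal{I}_\zeta$ is already handled by Proposition~\ref{pro48}(3). For the $\zeta$-toric ideal $I^{(\zeta)}_{\mathfrak{A}^{(n)}}=\ker\pi$, I would first absorb the constraint \eqref{t_prod} by the substitution $\mathbf{t}=(s_1,\dots,s_{n-1},(s_1\cdots s_{n-1})^{-1})$ of \eqref{T3-equiv_red}, so that $\pi$ becomes a $\C$-algebra homomorphism (legitimate since $\zeta\in\C$) from $\C[a]$ into the Laurent polynomial ring $\C[y_1,\dots,y_\ell,s_1^{\pm},\dots,s_{n-1}^{\pm}]$. The target is an integral domain, so its kernel $\ker\pi$ is prime.

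It then remains to match the varieties. By Proposition~\ref{pro48}(1), ${\bf V}(\mathcal{I}_\zeta)$ is the Zariski closure of the set $\mathcal{R}_\zeta\subset\C^{n\ell}$ of $\zeta$-reversible systems in family \eqref{nSYS}. Theorem~\ref{pro412} identifies $\mathcal{R}_\zeta$ precisely with the set of parameter points admitting a representation of the form \eqref{T3-equiv}, i.e.\ with the image of the morphism dual to $\pi$. Since for a homomorphism into the (reduced) coordinate ring of a variety the vanishing ideal of the Zariski closure of the image equals the kernel, we get ${\bf V}(I^{(\zeta)}_{\mathfrak{A}^{(n)}})=\overline{\mathcal{R}_\zeta}={\bf V}(\mathcal{I}_\zeta)$, and the displayed chain of equalities closes the proof.

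The step needing the most care is the identification of $\mathcal{R}_\zeta$ with the image of $\pi$: unlike an ordinary toric parametrization, $\pi$ carries the scalar factors $\zeta^j$ and is subject to the relation $t_1\cdots t_n=1$. Here Theorem~\ref{pro412} does the essential work, for it is exactly the statement that the $\mathbf{t}$-parametrization \eqref{T3-equiv} and the reversibility conditions \eqref{cond_rev} (equivalently the $\alpha$-description defining $\mathcal{I}_\zeta$ via $I^{(\zeta)}$) cut out the same set; without it the two varieties would not obviously coincide. A purely algebraic double-inclusion argument is also conceivable, but the prime-plus-variety route is cleaner and reuses the machinery already set up in the two-dimensional case.
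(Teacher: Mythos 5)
Your proposal is correct and follows essentially the same route as the paper, whose entire proof is the one-line observation that both ideals are prime and have the same variety; you have simply made explicit the ingredients the paper leaves implicit (primality of $\ker\pi$ via the integral-domain target, primality of $\mathcal{I}_\zeta$ from Proposition \ref{pro48}, and the variety match via Theorem \ref{pro412}).
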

\begin{proof}
The equality follows from the fact both ideals are prime and have the same corresponding variety.
\end{proof}

For $\mu \in \N_0^{n\ell} $, 
we define 
$
w(\mu)= \bar \zeta^{\mu^{\{1\}}+  \mu^{\{2\}}+ \dots + \mu^{\{\ell\}} },   
$
where 
$$
\mu^{\{k\}}=(\mu_{(k-1)n+1},\mu_{(k-1)n+2},\ldots ,\mu_{kn} ), \quad k=1,\dots, \ell.
$$
In the short form we can write
$$
w(\mu)= \zeta ^{{\overline n}\, \cdot \sum_{k=1}^\ell \mu^{\{k\} }},
$$
where $\overline{n}=(0,1,\dots, n-1)$.


\begin{lemma}\label{lem424}
For      $\mu, \theta \in \N_0^{n\ell} $, we have the following 
\begin{enumerate}
\item $w(\mu+\theta) = w(\mu) w(\theta)$
\item $w(\mu-\theta) =\frac{ w(\mu)} {w(\theta)}$
\item $w(\hat \mu)= \zeta^{|\mu|}w  (\mu)$. \label{item3}
\end{enumerate}
\end{lemma}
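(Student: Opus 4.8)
The plan is to reduce all three identities to the additivity of a single integer exponent. Writing $E(\mu)=\overline{n}\cdot\sum_{k=1}^{\ell}\mu^{\{k\}}$ so that $w(\mu)=\zeta^{E(\mu)}$, I note that both the block extraction $\mu\mapsto\mu^{\{k\}}$ and the inner product with $\overline{n}$ are linear in $\mu$; hence $E$ is additive, $E(\mu\pm\theta)=E(\mu)\pm E(\theta)$. Parts (1) and (2) then follow at once: $w(\mu+\theta)=\zeta^{E(\mu)+E(\theta)}=w(\mu)\,w(\theta)$ and $w(\mu-\theta)=\zeta^{E(\mu)-E(\theta)}=w(\mu)/w(\theta)$.

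For part (3) the key point is to track what the involution $\hat\mu=\mu\widetilde{\bf T}$ does to the exponent. Since $\widetilde{\bf T}=\text{diag}[\widetilde T_n,\ldots,\widetilde T_n]$ is block diagonal, I have $\hat\mu^{\{k\}}=\mu^{\{k\}}\widetilde T_n$ for each $k$, and a direct matrix computation shows that the right action of $\widetilde T_n$ cyclically shifts a block $v=(v_1,\ldots,v_n)$ to $v\widetilde T_n=(v_n,v_1,\ldots,v_{n-1})$. I then compute the resulting change in the weighted sum, obtaining
$$\overline{n}\cdot(v\widetilde T_n)-\overline{n}\cdot v=\sum_{i=1}^{n}v_i-n\,v_n=|v|-n\,v_n.$$

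The decisive observation is the reduction modulo $n$: because $\zeta^n=1$, the term $-n\,v_n$ contributes a trivial factor, so each block raises the $\zeta$-exponent by $|v|$ modulo $n$. Summing over the $\ell$ blocks gives $E(\hat\mu)\equiv E(\mu)+\sum_{k=1}^{\ell}|\mu^{\{k\}}|=E(\mu)+|\mu|\pmod n$, whence $w(\hat\mu)=\zeta^{|\mu|}w(\mu)$, which is part (3). The main obstacle is precisely this third part: one must identify correctly how $\widetilde T_n$ permutes the coordinates of each block, keep careful track of the index shift in the $\overline{n}$-weighted sum, and then discard the multiple of $n$ via $\zeta^n=1$. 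Parts (1) and (2) are routine consequences of additivity.
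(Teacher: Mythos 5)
Your proof is correct and takes essentially the same approach as the paper: parts (1) and (2) from additivity of the exponent $E$, and part (3) by noting that the right action of $\widetilde T_n$ cyclically shifts each block, computing that the $\overline{n}$-weighted sum changes by $|v|$ minus a multiple of $n$, and discarding that multiple via $\zeta^n=1$. The paper merely aggregates the blocks into a single vector $u$ before applying the shift, which is the same computation in a different order.
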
 

\begin{proof}
The first two items can be easily verified by straightforward computations.
To prove \ref{item3}, let $ u=(u_1, \dots, u_n)$, where, for $i=1, \dots, n$,
    $$
    u_i=\sum_{k=1}^\ell \mu_{(k-1) n+i}.  
    $$
Then 
$
w(\mu)=  \zeta^{ \overline{n} \cdot u }. 
$
Observe that 
$
\hat \mu =(\hat  \mu^{\{1\}},   \hat  \mu^{\{2\}}, \dots, \hat 
\mu^{\{\ell\}}),
$
where 
$$
\hat  \mu^{\{k\}}=   (\mu_{kn}, \mu_{(k-1)n+1},\mu_{(k-1)n+2},\ldots ,\mu_{kn-1}),   \quad k=1,\dots, \ell.
$$
Therefore,  
$$
w(\hat \mu) = \zeta^{ 0 u_n+1 u_1+2 u_2+\dots +(n-1) u_{n-1} }. 
$$
Since $|\mu|=u_1+\dots +u_n$,  
$$
w( \mu)  \zeta^{|\mu  |}  = \zeta^{  1 u_1+2 u_2++\dots +(n-1) u_{n-1} +n u_n } =w(\hat \mu).
$$
\end{proof}

\begin{pro}\label{pro426}
    $\zt =\la w(\beta^- )  [\beta^+ ]-  w(\beta^+) [\beta^- ] \ :\ \mathfrak{A}^{\n} \beta =0 \ra.$ 
\end{pro}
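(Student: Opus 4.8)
The plan is to prove the two inclusions separately, viewing $\pi$ as an ordinary toric (monomial) map decorated with $\zeta$-power coefficients. The backbone is the explicit form of $\pi$ on a monomial: for $\nu\in\np^{n\ell}$ one has $\pi([\nu])=\zeta^{e(\nu)}\,{\bf y}^{b(\nu)}\,{\bf t}^{L(\nu)}$, where $b(\nu)_k=\sum_{m=0}^{n-1}\nu_{(k-1)n+m+1}$ is the $k$-th block sum, $L(\nu)=\mathfrak{L}\nu^\top$ is the map \eqref{defL}, and $e(\nu)=|\nu|+\overline{n}\cdot\sum_k\nu^{\{k\}}$, so that by Lemma \ref{lem424}(1) the $\zeta$-scalar is exactly $\zeta^{e(\nu)}=\zeta^{|\nu|}w(\nu)$. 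Alongside $\pi$ I would introduce the untwisted toric map $\pi_0$ obtained by setting every $\zeta^j=1$, i.e. $a^{(j)}_{T^{j-1}_n(\pk)}\mapsto y_k{\bf t}^{T^{j-1}_n(\pk)}$, so that $\pi([\nu])=\zeta^{e(\nu)}\pi_0([\nu])$; by the remark following the definition of the $\zeta$-toric ideal together with Proposition \ref{EI}, $\ker\pi_0=I_{\mathfrak{A}\n}=I_\msl$, and $\pi_0([\mu])=\pi_0([\nu])$ precisely when $\mu-\nu\in\msl$.

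For the inclusion $\supseteq$, fix $\beta$ with $\mathfrak{A}\n\beta=0$. The $O$-rows of \eqref{An} force $b(\beta^+)=b(\beta^-)$, and in particular $|\beta^+|=|\beta^-|$, while the $P$-rows force $L^1(\beta)=\cdots=L^n(\beta)$, so that $L(\beta)$ is an integer multiple of $(1,\dots,1)$ and is therefore annihilated modulo the relation $t_1\cdots t_n=1$ of \eqref{t_prod}. Hence $\pi([\beta^+])=\zeta^{|\beta^+|}w(\beta^+)\,M$ and $\pi([\beta^-])=\zeta^{|\beta^-|}w(\beta^-)\,M$ for a common monomial $M={\bf y}^{b(\beta^+)}{\bf t}^{L(\beta^+)}$ (the ${\bf t}$-part read modulo $t_1\cdots t_n=1$); since $|\beta^+|=|\beta^-|$, the defining twisted binomial cancels the residual scalar and $w(\beta^-)\pi([\beta^+])=w(\beta^+)\pi([\beta^-])$. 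Thus $w(\beta^-)[\beta^+]-w(\beta^+)[\beta^-]\in\ker\pi=\zt$, giving one inclusion.

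For the reverse inclusion, write $J$ for the right-hand ideal and take $f=\sum_\nu c_\nu[\nu]\in\zt$. I would partition $\Supp(f)$ into classes under $\pi_0$, i.e. into cosets of $\msl$. Since distinct cosets map under $\pi_0$ to distinct monomials in ${\bf y},{\bf t}$, these monomials are $\Q(\zeta)$-linearly independent, so from $\pi(f)=\sum_\nu c_\nu\zeta^{e(\nu)}\pi_0([\nu])=0$ the weighted coefficient sum $\sum_{\nu\in C}c_\nu\zeta^{e(\nu)}$ must vanish on each class $C$. It then suffices to show each class part of $f$ lies in $J$. For $\nu,\nu'$ in one class put $\beta=\nu-\nu'\in\msl$ and $\gamma=\min(\nu,\nu')$ (componentwise), so $[\nu]=[\gamma][\beta^+]$ and $[\nu']=[\gamma][\beta^-]$; multiplying the defining binomial by $[\gamma]$ gives $w(\beta^-)[\nu]-w(\beta^+)[\nu']\in J$. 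Using that $w$ is a homomorphism (Lemma \ref{lem424}(1)--(2)) and that $|\beta|=0$ on $\msl$, one verifies $w(\beta^+)/w(\beta^-)=\zeta^{e(\nu)-e(\nu')}$, whence $\zeta^{-e(\nu)}[\nu]\equiv\zeta^{-e(\nu')}[\nu']\pmod J$. Thus the normalized monomials $\zeta^{-e(\nu)}[\nu]$ are all congruent modulo $J$ within a class, and the class part of $f$ equals $\big(\sum_{\nu\in C}c_\nu\zeta^{e(\nu)}\big)$ times a single such monomial modulo $J$, i.e. $0$. Summing over classes yields $f\in J$.

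The main obstacle is the $\zeta$-scalar bookkeeping in the reverse inclusion: one must reconcile the two twisting factors $w(\beta^+),w(\beta^-)$ with the exponential weights $\zeta^{e(\nu)}$ produced by $\pi$. The crux is the identity $e(\nu)-e(\nu')=\overline{n}\cdot\sum_k(\nu-\nu')^{\{k\}}$, valid on $\msl$ because the $|\cdot|$-contribution to $e$ drops out once $|\beta|=0$ (a consequence of the $O$-rows). This is exactly what makes $\zeta^{-e(\nu)}[\nu]$ a well-defined invariant of each coset modulo $J$, and it is the signed-support analogue of Lemma \ref{lem424}(3).
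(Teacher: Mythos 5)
Your proof is correct, and it supplies in full the details that the paper compresses into a citation: the paper's own proof consists of the remark that the inclusion $\supseteq$ is "easy to see" and that the reverse inclusion follows "using a similar reasoning as in the proof of Lemma 4.1" of Sturmfels' \emph{Gr\"obner Bases and Convex Polytopes}. Your forward inclusion is exactly the verification the paper omits, and you correctly isolate the one non-obvious point: the $O$-rows of $\mathfrak{A}\n$ force $|\beta^+|=|\beta^-|$, which kills the residual factor $\zeta^{|\beta|}$ and reconciles the weights $w(\beta^\pm)$ with the scalars $\zeta^{e(\beta^\pm)}$ produced by $\pi$ (and, as a byproduct, $b(\beta)=0$ also forces $L(\beta)=0$, so the ${\bf t}$-parts agree regardless of whether one works modulo $t_1\cdots t_n=1$). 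For the reverse inclusion you replace Sturmfels' term-order induction (pick the initial monomial of a counterexample, find a partner in the same fiber, subtract, contradict minimality) by the equivalent fiber-grading argument: decompose $\Supp(f)$ into cosets of $\msl$, use linear independence of the distinct image monomials to force each fiber's weighted coefficient sum to vanish, and observe that $\zeta^{-e(\nu)}[\nu]$ is a single residue class modulo the right-hand ideal on each fiber. The two routes are interchangeable, but your version has the merit of making the $\zeta$-normalization explicit and self-contained, which is precisely the "bookkeeping" the paper's one-line reference glosses over.
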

\begin{proof}
It is easy to see that the binomial $ w(\beta^- )  [\beta^+ ]-  w(\beta^+) [\beta^- ] \in \ker \pi$ for all $\beta$ such that $\mathfrak{A}^{\n} \beta =0$. The opposite inclusion can be proved using a  similar reasoning as in the proof of Lemma 4.1 of \cite{St-GBCP}.
\end{proof}


The proof of the next statement is similar to the proof of  Theorem 3.20 of  \cite{HHO}.
\begin{thm}\label{thm427}
   $\zt : \mathsf{a}^\infty = \zt. $ 
\end{thm}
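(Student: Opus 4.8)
The plan is to exploit that $\zt$ is, by definition, the kernel of the homomorphism $\pi$ of \eqref{pi}, and that $\pi$ takes values in an integral domain. First I would make the codomain explicit: using the relation $t_1t_2\cdots t_n=1$ from \eqref{t_prod} to eliminate $t_n$, the map $\pi$ lands in the Laurent polynomial algebra $\Q(\zeta)[y_1,\dots,y_\ell,t_1^{\pm},\dots,t_{n-1}^{\pm}]$, which is an integral domain, being a localization of a polynomial ring over the field $\Q(\zeta)$. The inclusion $\zt\subseteq \zt:\mathsf{a}^\infty$ is automatic, so the whole content of the statement is the reverse inclusion.

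For the reverse inclusion, I would take any $g\in \zt:\mathsf{a}^\infty$, so that $\mathsf{a}^m g\in \zt=\ker\pi$ for some $m\in \N_0$. Applying the ring homomorphism $\pi$ gives $\pi(\mathsf{a})^m\,\pi(g)=0$ in the domain above. By \eqref{pi} the image of each parameter is a nonzero monomial in the $y_k$ and the $t_i$ multiplied by a power of $\zeta$; since $\mathsf{a}$ is the product of all parameters, $\pi(\mathsf{a})$ is again such a nonzero monomial, and in particular $\pi(\mathsf{a})\neq 0$. Because the target is a domain, $\pi(\mathsf{a})^m\neq 0$ and is a nonzerodivisor, so cancelling it yields $\pi(g)=0$, i.e. $g\in\ker\pi=\zt$. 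This gives $\zt:\mathsf{a}^\infty\subseteq\zt$, which together with the trivial inclusion proves the equality.

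The one point genuinely requiring attention — and the exact analogue of what drives Theorem 3.20 of \cite{HHO} — is that $\pi(\mathsf{a})$ must be a nonzerodivisor in the codomain. Here this is transparent, because $\zeta$ is a unit in the field $\Q(\zeta)$ and each $t_i,y_k$ is a nonzerodivisor in the Laurent polynomial ring. I do not expect a real obstacle: although $\zt$ is generated by the twisted binomials $w(\beta^-)[\beta^+]-w(\beta^+)[\beta^-]$ of Proposition \ref{pro426} rather than by ordinary binomials, the saturation argument does not touch the generators at all — it only uses that $\zt$ is the kernel of a single homomorphism into a domain. This is precisely why $\zt$ behaves better than $I\n_S$ or $I\n_R$, whose saturations by $\mathsf{a}$ genuinely enlarge them.
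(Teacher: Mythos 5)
Your proof is correct, and it takes a genuinely more direct route than the paper's. The paper follows the template of Theorem 3.20 of \cite{HHO}: it first invokes Corollary 1.7 of \cite{ES} to reduce to a single binomial $f_\beta$, normalizes it to have disjoint supports, passes to the localization $\mathsf{S}_{\mathsf{a}}$, identifies $\mathsf{S}_{\mathsf{a}}/(\zt\,\mathsf{S}_{\mathsf{a}})$ with the group algebra $\Q(\zeta)[\Z^{n\ell}/\mathsf{L}_\zeta]$, and concludes that the exponent vector lies in $\mathsf{L}_\zeta$; this route leans on the generator description of Proposition \ref{pro426} and on the binomial-ideal theory of \cite{ES}, but in exchange it makes the group-ring structure of the localized quotient explicit. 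You instead use only the \emph{definition} of $\zt$ as $\ker\pi$ together with the observation that the codomain of \eqref{pi} --- after eliminating $t_n$ via \eqref{t_prod} --- is a Laurent polynomial ring over the field $\Q(\zeta)$, hence an integral domain in which $\pi(\mathsf{a})$ is a nonzero monomial and therefore a nonzerodivisor; cancelling $\pi(\mathsf{a})^m$ from $\pi(\mathsf{a})^m\pi(g)=0$ is then immediate. This is the standard reason kernels of monomial parametrizations are saturated, it applies verbatim to the $\zeta$-twisted map because the twist only multiplies each image by a unit $\zeta^j$, and as a bonus it also yields at once that $\zt$ is prime. The one point worth making explicit if this were written up is the codomain of $\pi$ (the paper leaves it implicit), since imposing or not imposing the relation \eqref{t_prod} changes which ideal $\ker\pi$ is --- though, as you note, either choice gives a domain, so the saturation statement itself is unaffected. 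I see no gap.
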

\begin{proof}
   We only need to prove that $\zt: \mathsf{a}^\infty \subset \zt. 
   $
Let 
\be \label{fin}
f\in \zt : \mathsf{a}^\infty .
\ee
Since  
by Corollary 1.7 of \cite{ES}      $\zt : \mathsf{a}^\infty $  is a binomial ideal, we can assume that 
$$
f=f_\beta =u [\beta^+ ]- v [\beta^-],
$$
where $\beta^+, \beta^-$ are some $n\ell$-tuples in $   \N_0^{n\ell}$ and $u, v \in \Q(\zeta).$
By \eqref{fin} we have 
\be \label{bin}
\mathsf{a}^k (u [\beta^+ ]- v [\beta^-])\in \zt  
\ee
for some $k\in \N_0 $.

If $supp(\beta^+)\cap supp(\beta^{-})\ne \emptyset $ we can write $\beta^+=\tilde \beta^++\gamma$, 
 $\beta^-=\tilde \beta^-+\gamma$, 
 where  $supp(\tilde \beta^+)\cap supp(\tilde \beta^-)= \emptyset $. 
Thus, we can write \eqref{bin} as 
$$ \mathsf{b}= \mathsf{a}^k [\gamma] (u [\tilde \beta^+ ]- v [\tilde \beta^-])\in \zt.  
$$ 

By the definition $\zt$ is the kernel of $ \pi$ and the homomorphism  $\pi $
maps the binomial $ \mathsf{b}$ into zero only if it maps 
\be \label{bin1}
u [\tilde \beta^+ ]- v [\tilde \beta^-]
\ee
into zero. Clearly, \eqref{bin1} can be zero under map \eqref{pi} only if 
$$
u w(\tilde \beta^+) +v w(\tilde \beta^-)=0.
$$
Therefore, without loss of generality we can assume that
\be \label{f_b}
f_\beta= w(\beta^-) [\beta^+]- w(\beta^+) [\beta^-],
\ee
where $supp(\beta^-) \cap supp(\beta^+)=\emptyset.   $

Let  $ \beta =\beta^+-\beta^-.  $
Since $supp(\beta^-) \cap supp(\beta^+)=\emptyset$, the notation is in the agreement with \eqref{beta_def}.
Denote by  $\mathsf{L}_\zeta$  the lattice 
$$
\mathsf{L}_\zeta= \{ \beta\in \Z^{n\ell} \ : \ \mathfrak{A}^{\n} \beta =0\}, 
$$
and, by $\mathsf{S}$ the polynomial ring 
$\Q(\zeta)[a]$, $\mathsf{S}=\Q(\zeta)[a]$. Furthermore,  let $\mathsf{S}_{\mathsf{a}}$ be the localization of  $\mathsf{S}$ 
with respect to the multiplicative set 
$\{ 1, \mathsf{a}, \mathsf{a}^2, \dots  \}$.

Since $f_\beta$ is of the form \eqref{f_b},  to prove that $f_\beta \in \zt$ it is sufficient to show that $\beta \in \mathsf{L}_\zeta.$
Since $f_\beta \in \zt : \mathsf{a}^\infty  $,
$$
\mathsf{a}^k (w(\beta^-) [\beta^+]  -w(\beta^+) [\beta^-]  )\in \zt
$$
for some $k\in \N_0$. Using Lemma \ref{lem424}, we obtain 
\be \label{bem1}
\frac{[\beta]}{w(\beta)}-1 \in \zt  \mathsf{S}_{\mathsf{a}}.
\ee
Moreover, it is easy to see that  
$$
\left \langle  \frac{[\gamma]}{w(\gamma)}-1 \ : \ \gamma \in \mathsf{L}_\zeta \right \rangle   =  \zt  \mathsf{S}_{\mathsf{a}}.
$$
Let 
$
F :  \mathsf{S}_{\mathsf{a}}  \to  \Q(\zeta)[\Z^{n\ell}/\mathsf{L}_\zeta]
$
be defined on the basis monomials of
$\mathsf{S}_{\mathsf{a}}$ by 
$$
F ([\gamma]/w(\gamma)) = \gamma+\mathsf{L}_\zeta.
$$
It is not difficult to see that  $F$ is a ring  epimorphism from  $\mathsf{S}_{\mathsf{a}}$ to the group ring  $\Q(\zeta)[G] $ where $G=\Z^{n\ell}/\mathsf{L}_\zeta$, and the kernel of $F$ is $  \zt  \mathsf{S}_{\mathsf{a}}. $ 
Then $ 
\mathsf{S}_{\mathsf{a}}/ (  \zt  \mathsf{S}_{\mathsf{a}} )  $  is isomorphic to $\Q(\zeta)[\Z^{n\ell}/\mathsf{L}_\zeta]$
and the isomorphism is defined 
by
$$
\frac{[\gamma]}{w(\gamma)} +  \zt  \mathsf{S}_{\mathsf{a}} \to \gamma+ \mathsf{L}_\zeta.
$$
By \eqref{bem1} under this isomorphism, the monomial on the left hand side of \eqref{bem1} is mapped to $1_G-(\beta +\mathsf{L}_\zeta)$ which is equal to zero of $\Q(\zeta)[G]$. Therefore $\beta \in \mathsf{L}_\zeta.$
\end{proof}

{
\begin{remark}
After the substitution 
$$
 a^{(j)}_{T^{j-1}_n(\pk)} \zeta^{-j}=  b^{(j)}_{T^{j-1}_n(\pk)}, 
$$
map \eqref{pi} becomes the usual toric map. Consequently, we predict that  most results on binomial (difference) ideals should remain true for $\zeta$-toric ideals as well.
\end{remark}
}

Now we prove the main result of our paper.

{\it Proof of Theorem    \ref{conj_2}}.
 According to Theorem \ref{thm427},   $ \zt : {\mathsf  a} ^\infty = \zt$. Furthermore, $I^{(n)}_R \subseteq \zt$. To see this, let $ \zeta^{|\mu|} [\mu]-[\hat \mu] \in I^{(n)}_R$, where $\mu \in \mathcal{M}\n$. 
By Lemma \ref{lem424} $ w(\hat \mu) [\mu]- w(\mu) [\hat \mu] \in I^{(n)}_R$.
Moreover,
  $$ w(\hat \mu) [\mu]- w(\mu)  [\hat \mu]= w(\theta) [\theta](w(\gamma) [\tau]- w(\tau)[\gamma])$$  where $\theta\in \N_0^{n\ell}$ and  $supp(\tau) \cap supp(\gamma) = \emptyset$. Let $\w = \mu - \hat \mu$. Then $\w^+ = \tau$ and $\w^- = \gamma$.  
    Since $\mathfrak{M}\n\w=0$ and 
    $$
    \sum_{s=1}^n \w_{n(i-1)+s}=\sum_{s=1}^n \mu_{n(i-1)+s}- \sum_{s=1}^n \hat \mu_{n(i-1)+s}=0 \quad {\rm for } \quad i=1,\dots, \ell,
    $$
    we have 
    $\mathfrak{A}\n  \w =0$. Hence by Proposition \ref{pro426}, $w(\hat \mu) [\mu]-w(\mu)  [\hat \mu] \in \zt$. 
    Thus,  $I^{(n)}_R \subseteq \zt$, and, therefore, $$ I^{(n)}_R: {\mathsf  a} ^\infty \subseteq \zt: {\mathsf  a} ^\infty = \zt.$$

    Next we show that  $\zt \subseteq I^{(n)}_R: {\mathsf  a} ^\infty$.  Let $w(\gamma)  [\tau]-w(\tau) [\gamma] \in \zt$. Since $ \zt: {\mathsf  a} ^\infty = \zt$, without loss of generality, we assume that $supp(\tau) \cap supp(\gamma) =\emptyset$. 
    
    {
    Now let $\w \in \N_0^{n\ell}$ be such that $\hat \tau + \hat \w = \gamma +\w$. To find such $\w$, notice that the latter equation is equivalent to $ \omega- \hat \omega= \hat \tau -\gamma, $ so 
    $$
    \omega= (\hat \tau -\gamma) (id -\widetilde {\bf T})^{-1},
    $$
where $id $ is the $n\ell \times n\ell$ identity matrix. 
 

    Let $\mu =\tau+\w$. Then $\hat \mu = \gamma+\w$ and $\mu-\hat\mu=\tau-\gamma$  yielding  $ \mathfrak{A}^{\n}\mu= \mathfrak{A}^{\n}\hat\mu $. Next we prove that  $\mu \in {\mathcal M}\n$. To this end, notice that from the definition of matrix } $\mathfrak{A}\n$ it follows that, 
    for $1 \leq i \leq n-1$,
    $$(L^i-L^{i+1})(\mu)= (L^i-L^{i+1})(\hat \mu),$$
    where $(L^i-L^{i+1})(\mu)$ is $L^i(\mu)-L^{i+1}(\mu)$. At the same time, it is easy to see that $$(L^{i-1}-L^{i})(\mu)= (L^i-L^{i+1})(\hat \mu),$$ for all $2 \leq i \leq n-1$.  Consequently, 
    for all $2 \leq i \leq n-1$,
    \begin{equation} 
        (L^i-L^{i+1})(\mu)= (L^{i-1}-L^{i})(\mu).
     \end{equation}  
    Furthermore, observe that
     $$
     \sum^{n-1}_{i=1}(L^i-L^{i+1})(\mu)=(L^1-L^n)(\mu)=-(L^1-L^2)(\mu)
     $$
    and, therefore, $n(L^i-L^{i+1})(\mu)=0$ or, equivalently, $(L^i-L^{i+1})(\mu)=0$  for all $2 \leq i \leq n-1$.
   Hence,   by Proposition \ref{pro_M}, $\mu \in {\mathcal M}\n$.
    Thus, {
    $$w(\hat \mu) [\mu] -w(\mu) [\hat \mu] = w(\w) [\w]\left(w(\gamma) [\tau]-w(\tau) [\gamma]\right) \in I^{(n)}_R.$$ This implies that $w(\gamma)[\tau]-w(\tau)[\gamma] \in I^{(n)}_R: {\mathsf  a} ^\infty$.  
    Therefore, $\zt  \subseteq I^{(n)}_R: {\mathsf  a} 
    ^\infty$. } $\hfill \square$

\subsection{Example: a cubic system}
Consider as an example cubic system \eqref{sys_3dim_ex}.
In this case, matrix \eqref{Mn} is 
$$
\widehat{ \mathfrak{M}}=
\begin{pmatrix} 
1 & -1 & 0 & 0 & 1 & -1 & 1 & 0 & -1 \\ 
0 & 1 & -1 & -1 & 0 & 1 & -1 & 1 & 0
\end{pmatrix}.
$$
The corresponding  toric ideal $ I_{\widehat{\mathfrak{M}}}$ is 
\begin{equation*}
\begin{array}{c} 
I_{\widehat{\mathfrak{M}}}= 
\la
-1 + a_{101} b_{110} c_{011}, c_{010} - b_{110} c_{011}, c_{001} - a_{101} c_{011}, 
 b_{100} - a_{101} b_{110},\\ b_{010} - b_{110} c_{011}, a_{100} - a_{101} b_{110}, 
 a_{001} - a_{101} c_{011} \ra.
\end{array}
\end{equation*}
To compute the Hilbert basis of the monoid corresponding to $\widehat {\mathfrak{M}}$, we use Algorithm
\ref{alg1} and consider the ideal 
\begin{equation*}
\begin{array}{c}
\la a_{100} - t_1 y_1, b_{010} - (t_2 y_2)/t_1, c_{001} -  y_3/t_2, a_{001} - y_4/t_2, b_{100} -  t_1 y_5, \\ c_{010} - (t_2 y_6)/t_1, a_{101} - (t_1 y_7)/t_2, b_{110} -  t_2 y_8, c_{011} - y_9/t_1  \ra.
\end{array}
\end{equation*}
Polynomials from the \gb of the above ideal   which do not depend on   
$t_1$ and $t_2$
are
\begin{equation*}
\begin{array}{c}
\{ a_{101} b_{110} c_{011} - y_7 y_8 y_9, -b_{110} c_{011} y_6 + c_{010} y_8 y_9, 
a_{101} c_{010} - y_6 y_7, -a_{101} b_{110} y_5 + b_{100} y_7 y_8,\\ 
b_{100} c_{011} - y_5 y_9, -b_{110} y_5 y_6 + b_{100} c_{010} y_8, 
a_{001} b_{110} - y_4 y_8, -a_{101} c_{011} y_4 + a_{001} y_7 y_9,\\ -c_{011} y_4 y_6 + 
 a_{001} c_{010} y_9, -a_{101} y_4 y_5 + a_{001} b_{100} y_7, a_{001} b_{100} c_{010} - y_4 y_5 y_6, 
b_{110} c_{001} - y_3 y_8, \\ -a_{101} c_{011} y_3 + c_{001} y_7 y_9, -a_{001} y_3 + 
 c_{001} y_4, -c_{011} y_3 y_6 + c_{001} c_{010} y_9, -a_{101} y_3 y_5 + b_{100} c_{001} y_7, \\
b_{100} c_{001} c_{010} - y_3 y_5 y_6, -b_{110} c_{011} y_2 + b_{010} y_8 y_9, -c_{010} y_2 + 
 b_{010} y_6, a_{101} b_{010} - y_2 y_7, \\-b_{110} y_2 y_5 + b_{010} b_{100} y_8, -c_{011} y_2 y_4 +
  a_{001} b_{010} y_9, a_{001} b_{010} b_{100} - y_2 y_4 y_5, \\-c_{011} y_2 y_3 + b_{010} c_{001} y_9,
 b_{010} b_{100} c_{001} - y_2 y_3 y_5, -a_{101} b_{110} y_1 + a_{100} y_7 y_8, -b_{100} y_1 + 
 a_{100} y_5, \\ a_{100} c_{011} - y_1 y_9, -b_{110} y_1 y_6 + a_{100} c_{010} y_8, -a_{101} y_1 y_4 +
  a_{001} a_{100} y_7, a_{001} a_{100} c_{010} - y_1 y_4 y_6,\\ -a_{101} y_1 y_3 + a_{100} c_{001} y_7,
 a_{100} c_{001} c_{010} - y_1 y_3 y_6, -b_{110} y_1 y_2 + a_{100} b_{010} y_8, \\
a_{001} a_{100} b_{010} - y_1 y_2 y_4, a_ {100} b_{010} c_{001} - y_1 y_2 y_3\}.
\end{array}
\end{equation*}
From these expressions, we see that the 
Hilbert basis of the   monoid $\widehat{\mcm}$ is
\begin{equation*}
\begin{array}{c}
\widehat H=\{(0,0,0,0,0,0,1,1,1), (0,0,0,0,0,1,1,0,0), 
(0,0,0,0,1,0,0,0,1),(0,0,0,1,0,0,0,1,0),\\
(0,0,0,1,1,1,0,0,0),(0,0,1,0,0,0,0,1,0),
(0,0,1,0,1,1,0,0,0),(0,1,0,0,0,0,1,0,0),\\
(0,1,0,1,1,0,0,0,0),(0,1,1,0,1,0,0,0,0),
(1,0,0,0,0,0,0,0,1),(1,0,0,1,0,1,0,0,0),\\
(1,0,1,0,0,1,0,0,0),(1,1,0,1,0,0,0,0,0),
(1,1,1,0,0,0,0,0,0)\}
\end{array}
\end{equation*}
and the Sibirsky ideal $\widehat I_S$ of system 
\eqref{sys_3dim_ex}  is 
\begin{equation*}
\begin{array}{c}
\widehat I_S=\la a_{101} c_{010} - a_{001} b_{110}, b_{100} c_{011} - c_{010} a_{101}, 
a_{001} b_{110} - b_{100} c_{011}, b_{110} c_{001} - a_{100} c_{011}, \\
b_{100} c_{001} c_{010} - a_{100} a_{001} c_{010}, a_{101} b_{010} - c_{001} b_{110}, 
a_{001} b_{010} b_{100} - c_{001} b_{100} c_{010},\\ b_{010} b_{100} c_{001} - a_{100} c_{001} c_{010}, 
a_{100} c_{011} - b_{010} a_{101}, a_{001} a_{100} c_{010} - b_{010} a_{001} b_{100}, \\
a_{100} c_{001} c_{010} - a_{100} b_{010} a_{001}, a_{001} a_{100} b_{010} - b_{010} c_{001} b_{100},\ra .
\end{array}
\end{equation*}

By \eqref{Izeta}, the  ideal $\widehat{\mathcal{I}}_\zeta$  is
computed as the fourth elimination ideal of 
\begin{equation*}
\begin{array}{c}
\la 1 - w \alpha \beta \gamma, 
-b_{010} + a_{100} \alpha \zeta, c_{010} - \alpha b_{100} \zeta, -c_{001} +  b_{010} \beta  \zeta, -c_{011} + \alpha b_{110} \beta \zeta, b_{100} -\\ a_{001} \gamma \zeta,
-b_{110} + 
 a_{101} \alpha \gamma \zeta, a_{001} - \beta  c_{010} \zeta, a_{100} - c_{001} \gamma \zeta, a_{101} - 
 \beta c_{011} \gamma \zeta, 1 + \zeta + \zeta^2\ra .
 \end{array}
\end{equation*}
Computations with {\sc Singular} \cite{sing}
over the field  $\Q(\zeta)$ yield
$$
\begin{array}{c}
\widehat I_\zeta=
\la  c_{001} b_{110}-\zeta  a_{100} c_{011},
   a_{001} b_{110}-\zeta  b_{100} c_{011}, 
   c_{010} a_{101}+(\zeta+1) b_{100} c_{011}, \\
   b_{010} a_{101}+(\zeta+1) a_{100} c_{011},
   b_{010} b_{100}-a_{100} c_{010}
   , a_{001} b_{010}-c_{001} c_{010}
    a_{001} a_{100}-b_{100} c_{001} 
\ra.
\end{array}
$$

Similarly,  
by \eqref{I_E}, the  ideal $\widehat{\mathcal{I}}_E$  is
computed as the fourth elimination ideal of 
\begin{equation*}
\begin{array}{c}
\la 1 - w \alpha \beta \gamma, a_{100} \alpha - b_{010}, a_{001} \gamma - b_{100}, a_{101} \alpha \gamma - b_{110}, 
b_{010} \beta - c_{001}, b_{100} \alpha - c_{010},\\
b_{110} \alpha \beta - c_{011}, c_{001} \gamma - a_{100}, 
c_{010} \beta - a_{001}, c_{011} \beta \gamma - a_{101} \ra 
\end{array}
\end{equation*}
and is equal  to 
\begin{equation*}
\begin{array}{c}
\widehat{\mathcal{I}}_E= \la -a_{101} c_{010} + b_{100} c_{011}, a_{001} b_{110} - a_{101} c_{010}, a_{101} b_{010} - b_{110} c_{001}, 
a_{001} b_{010} - c_{001} c_{010},\\ -b_{110} c_{001} + a_{100} c_{011}, -b_{010} b_{100} + a_{100} c_{010}, a_{001} a_{100} - b_{100} c_{001} \ra .
\end{array}
\end{equation*}

Computations with \texttt{Singular} 
show that
$$
\widehat I_S: (a\cdot b\cdot c)^\infty = \widehat{\mathcal{I}}_E,
$$
as it should be according to  Theorem  \ref{con_satur}.


The ideal $I\n_R$ can be immediately written down once we know  the ideal $I\n_S,   $ so in our case   using $\widehat I_S$  we obtain 
the corresponding ideal $I\n_R$, which we denote 
 $\widehat I_R$.
Computing  with \texttt{Singular} 
we see that 
$$
\widehat I_R: (a\cdot b\cdot c)^\infty = \widehat{\mathcal{I}}_\zeta,
$$
in agreement with Theorem \ref{conj_2}.

In Subsection \ref{sec_INF}, we have mentioned a relation of invariants to the structure of the normal form module. 
For $\zeta = (- 1 )^{ 2/3}$, the Poincar\'e-Dulac  normal form  of system \eqref{sys_3dim_ex}, up to order 4, is 
$$
\begin{aligned}
\dot x= & x + x^2 y z \left(\frac{1}{3} a_{001} a_{100} c_{010}  +\frac{2}{3} (-1)^{1/3} a_{001} a_{100} c_{010} 
+\frac{1}{3} (-1)^{2/3} a_{001} a_{100} c_{010} - \right. \\ & \left.(-1)^{1/3} a_{101} c_{010} +\frac 13 a_{001} b_{100} c_{010} - \frac{1}{3} (-1)^{1/3} a_{001} b_{100} c_{010} -\frac{2}{3} (-1)^{2/3} a_{001} b_{100} c_{010}\right)+\dots ,
\\
\dot y= & (- 1 )^{ 2/3} y+ x y^2 z \left(\frac{1}{3} (-1)^{2/3} a_{001} b_{010} b_{100}  -\frac{2}{3} a_{001} b_{010} b_{100} -\frac{1}{3} (-1)^{1/3} a_{001} b_{010} b_{100} +\right.  \\ & \left. (-1)^{2/3} a_{001} b_{110} + \frac{1}{3} a_{001} b_{100} c_{010} +\frac{2}{3} (-1)^{1/3} a_{001} b_{100} c_{010} + \frac{1}{3} (-1)^{2/3} a_{001} b_{100} c_{010} \right) +\dots  , 
\\
\dot z=& -(- 1 )^{ 1/3} z + x y z^2 
\left(  \frac{1}{3} b_{100} c_{001} c_{010}    -\frac{2}{3} a_{001} b_{100} c_{010}  - \frac{1}{3} (-1)^{1/3} a_{001} b_{100} c_{010} +\right.  \\ &
\left. \frac{1}{3} (-1)^{2/3} a_{001} b_{100} c_{010} -\frac{1}{3} (-1)^{1/3} b_{100} c_{001} c_{010} -\frac{2}{3} (-1)^{2/3} b_{100} c_{001} c_{010} +b_{100} c_{011} \right)  +\dots. 
\end{aligned}
$$
In particular, we see that 
the coefficients  of the normal form 
belong to the subalgebra generated by 
$$
\{ [\nu] : \nu \in \widehat H \},
$$
which is in agreement with Corollary \ref{cor_inv} and  \eqref{Stan_dec}.


{
From the results of this section and the example we see 
that the invariants are computed using the Lawrence lifting of the matrix $\mathfrak{M}^{\n}$, so the calculations are performing in an extended ring. 
To compute the equivariancy  conditions 
we use another lifting of the matrix 
$\mathfrak{M}^{\n}$, the matrix $\mathfrak{A}^{\n}$. The conditions of 
$\zeta$-reversibility are computed 
using the same lifting $\mathfrak{A}^{\n}$ and a kind of the "weight function" $w(\theta)$. 
Note that the matrix $\mathfrak{M}^{\n}$
is defined  using solely nonlinear 
terms of system \eqref{nSYS} and the matrices $O^{(k)}$ in 
$\mathfrak{A}^{\n}$ are defined using solely the linear part of \eqref{nSYS}.
It is an interesting and challenging 
problem to extend these results to the case of systems \eqref{sys_X} with more general matrix of the linear approximation. }

In conclusion, our study delves into the complex interplay between group actions, time-reversibility,  integrability of polynomial systems of ODEs, and binomial ideal theory. Our finding not only shed lights on these issues but also lay the groundwork for future exploration in this important  and promising direction  of research.

It would be particularly interesting to extend our analysis to systems described by \eqref{Asn}, especially those where the algebra of polynomial first integrals of \eqref{As} is generated by more than one element. A specific focus could be on examining the relationships between invariants and the normal form modules, along with their Stanley decomposition, for such systems.

\section*{Acknowledgments}   

Mateja Gra\v si\v c and Valery Romanovski acknowledge the support of the Slovenian Research and Innovation  Agency (core research programs P1-0288 and P1-0306, respectively).

\end{document}